\newtheorem{thm}{Theorem}
\newtheorem{defin}[thm]{Definition}
\newtheorem{prop}[thm]{Proposition}
\newtheorem{lem}[thm]{Lemma}
\newtheorem{claim}[thm]{Claim}
\newtheorem{coro}[thm]{Corollary}
\def\cA{\mathcal{A}}
\def\cB{\mathcal{B}}
\def\cF{\mathcal{F}}
\def\chM{\widehat{\mathcal{M}}}
\def\cH{\mathcal{H}}
\def\cK{\mathcal{K}}
\def\cR{\mathcal{R}}
\def\cT{\mathcal{T}}
\def\cB{\mathcal{B}}
\def\cQ{\mathcal{Q}}
\def\chB{\widehat{\mathcal{B}}}
\def\chQ{\widehat{\mathcal{Q}}}
\def\hB{\widehat{B}}
\def\hQ{\widehat{Q}}
\newcommand{\mytilde}{\raise.17ex\hbox{$\scriptstyle\mathtt{\sim}$}}
\author{Gwendal Collet\and \'Eric Fusy}
\thanks{LIX, \'Ecole Polytechnique, 91128 Palaiseau, France. \texttt{fusy,gcollet@lix.polytechnique.fr}.}
\begin{document}

\title[A formula for (quasi-)constellations with boundaries]{A simple formula for the series of constellations and quasi-constellations with boundaries}

\begin{abstract}
We obtain a very simple formula for the generating 
function of bipartite (resp. quasi-bipartite)
planar maps with boundaries (holes) of prescribed lengths, which generalizes
certain expressions obtained by Eynard in a book to appear. The formula is derived
from a bijection due to Bouttier, Di Francesco and Guitter combined
with a process (reminiscent of a construction of Pitman) 
of aggregating connected components of a forest into a single tree.
The formula naturally extends to $p$-constellations and quasi-$p$-constellations with boundaries (the case $p=2$ corresponding to bipartite maps).
\end{abstract}

 \maketitle

\section{Introduction}
\label{sec:in}
Planar maps, \textit{i.e.}, connected graphs embedded on the sphere, have attracted
a lot of attention since the seminal work of Tutte~\cite{Tutte62,Tutte63}. By considering rooted
maps (i.e., maps where a corner is marked~\footnote{In the literature, rooted maps are often
defined as maps with a marked oriented edge, which is equivalent to marking a corner, e.g., 
the corner to the left of the origin of the marked edge.}) 
and using a recursive approach, 
Tutte found beautiful counting formulas for many families of maps (bipartite, triangulations,...).
Several features occur recurrently (see~\cite{BJ05a} for a unified treatment): 
the generating function $y=y(x)$ is typically algebraic, often lagrangean
(i.e., there is a parametrization as $\{y=Q_1(t),x=Q_2(t)\}$, where $Q_1(.)$ and $Q_2(.)$ are explicit
rational expressions), yielding simple (binomial-like) formulas for the counting coefficients $c_n$,
and the asymptotics of the coefficients is in $c\ \!\gamma^nn^{-5/2}$ for some constants $c>0$ and $\gamma>1$. In this article we firstly focus on bipartite maps (all faces have even degree)
and on quasi-bipartite maps (all faces have even degree except for two, which have odd degree). 
 One of the first counting results obtained by Tutte is a strikingly simple formula (called
 formula of slicings) 
for the number $A[\ell_1,\ldots,\ell_r]$ of maps 
with $r$ numbered faces $f_1,\ldots,f_r$ of respective degrees $\ell_1,\ldots,\ell_r$,
  each face having a marked corner (for simple parity reasons the number of odd $\ell_i$
must be even).       

\begin{figure}
\begin{center}
\includegraphics[width=0.8\linewidth]{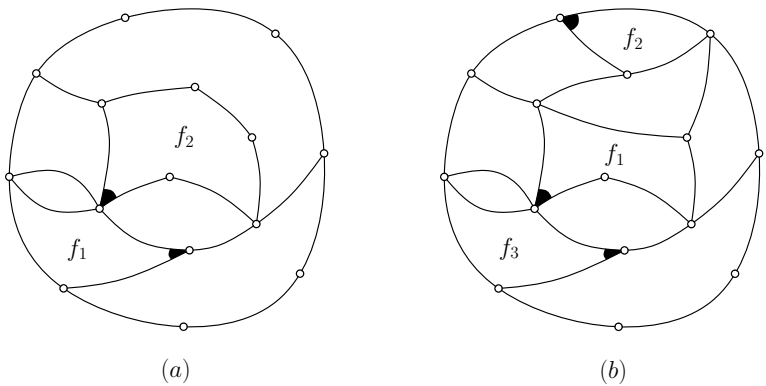}
\end{center}
\caption{(a)~A bipartite map with $2$ boundaries $f_1, f_2$ of respective degree $4, 6$. (b)~A quasi-bipartite map with $3$ boundaries $f_1, f_2, f_3$ of respective degree $5, 3, 4$.}
\label{fig:b_qb_maps}
\end{figure}

Solving a technically involved recurrence
satisfied by these coefficients, he proved in~\cite{Tutte62} 
that when none or only two of the $\ell_i$
are odd (bipartite and quasi-bipartite case, respectively), then: 
\begin{equation}\label{eq:slicings}
A[\ell_1,\ldots,\ell_r]=\frac{(e-1)!}{v!}\prod_{i=1}^r\alpha(\ell_i),\ \ \mathrm{with}\ \alpha(\ell):=\frac{\ell!}{\lfloor\ell/2\rfloor!\lfloor(\ell-1)/2\rfloor!},
\end{equation}
where $e=\sum_{i=1}^r\ell_i/2$ and $v=e-r+2$ are the numbers of edges and
vertices in such maps. The formula was recovered by Cori~\cite{Co75,Co76} 
(using a certain encoding procedure for planar maps); 
and the formula in the bipartite case was 
rediscovered bijectively by Schaeffer~\cite{Sc97}, based
on a correspondence with so-called \emph{blossoming trees}. 
Alternatively one can use a more recent bijection by Bouttier, Di Francesco and
Guitter~\cite{BDG04}  
(based on a correspondence with so-called \emph{mobiles}) which 
itself extends earlier constructions by Cori and Vauquelin~\cite{CoVa81} and by Schaeffer~\cite[Sec.~6.1]{S-these} for quadrangulations. 
The bijection with mobiles yields the following: if we denote
by  $R\equiv R(t)\equiv R(t;x_1,x_2,\ldots)$ the generating function specified by 
\begin{equation}\label{eq:seriesR}
R=t+\sum_{i\geq 1}x_i\binom{2i-1}{i}R^{i}.
\end{equation}
and denote by $M(t)\equiv M(t;x_1,x_2,\ldots)$ the generating function of rooted bipartite maps, where $t$ marks the number of vertices and $x_i$ marks the number of faces of degree $2i$
for $i\geq 1$, then  $M'(t)=2R(t)$. And one easily recovers~\eqref{eq:slicings} 
in the bipartite case 
by an application of the Lagrange
inversion formula to extract the coefficients of $R(t)$.

As we can see, maps might satisfy beautiful counting formulas, regarding \emph{counting coefficients}~\footnote{We also mention the work of Krikun~\cite{Kri07} where a beautiful
formula is proved for the number of 
triangulations with multiple boundaries of prescribed lengths, a bijective proof
of which is still to be found.}. 
Regarding \emph{generating functions}, formulas can be very nice and compact as well. 
In a book to be published~\cite{Eynard11}, Eynard gives an iterative procedure (based on residue calculations) to compute 
the generating function of maps of arbitrary genus and  with several marked faces, which we will call
boundary-faces (or shortly boundaries). In certain cases, this yields an explicit expression for the generating function. 
For example, he obtains formulas for the (multivariate) generating functions of bipartite and quasi-bipartite maps with two or three boundaries of arbitrary lengths $\ell_1, \ell_2, \ell_3$ (in the quasi-bipartite case two of
these lengths are odd), where $t$ marks the number of vertices and $x_i$ marks
the number of non-boundary faces of degree $2i$:

\vspace{-.5cm}

\begin{eqnarray}
\label{eq-eynard1}
G_{\ell_1,\ell_2} &=& \gamma^{\ell_1 + \ell_2} \sum_{j=0}^{\left\lfloor \ell_2/2\right\rfloor} (\ell_2-2j) \frac{\ell_1!\ell_2!}{j!(\frac{\ell_1-\ell_2}{2}+j)!(\frac{\ell_1+\ell_2}{2}-j)!(\ell_2-j)!}, 
\\
\label{eq-eynard2}
G_{\ell_1,\ell_2,\ell_3} &=& \frac{\gamma^{\ell_1+\ell_2+\ell_3-1}}{y'(1)}\left( \prod_{i=1}^{3} \frac{\ell_i!}{\left\lfloor \ell_i/2\right\rfloor ! \left\lfloor (\ell_i-1)/2 \right\rfloor !} \right).
\end{eqnarray}
In these formulas the series $\gamma$ and $y'(1)$ are closely related to $R(t)$, precisely 
 $\gamma^2=R(t)$ and one can check that $y'(1)=\gamma/R'(t)$.  

In the first part of this article, we obtain new formulas which generalize Eynard's ones to any number of boundaries, both in the bipartite and the quasi-bipartite case. For $r\geq 1$ and $\ell_1,\ldots,\ell_r$ positive integers, an \emph{even map} of type $(\ell_1,\ldots,\ell_r)$
is a map with $r$ (numbered) marked faces ---called \emph{boundary-faces}---   
$f_1,\ldots,f_r$ of degrees $\ell_1,\ldots,\ell_r$, each boundary-face having a marked
corner,  and with all the other faces of even degree. (Note that
there is an even number of odd $\ell_i$ by a simple parity argument.)  
Let $G_{\ell_1,\ldots,\ell_r}:=G_{\ell_1,\ldots,\ell_r}(t;x_1,x_2,\ldots)$ be the corresponding generating function 
where $t$ marks the number of vertices and $x_i$ marks
the number of non-boundary faces of degree $2i$. Our main result is:

\begin{thm}\label{theo:main}
When none or only two of the $\ell_i$ are odd,  
then the following formula holds:
\begin{equation}\label{eq:F}
G_{\ell_1,\ldots,\ell_r}=\Big(\prod_{i=1}^r\alpha(\ell_i)\Big)\cdot\frac1{s}\cdot\frac{\mathrm{d}^{r-\!2}}{\mathrm{d}t^{r-\!2}}R^s,
\end{equation}
$$\textrm{with}\ \alpha(\ell)=\frac{\ell!}{\lfloor \tfrac{\ell}{2}\rfloor!\lfloor \tfrac{\ell-1}{2}\rfloor!},\ s=\frac{\ell_1+\cdots+\ell_r}{2}, \textrm{ where } R \textrm{ is given by~\eqref{eq:seriesR}}.$$   
\end{thm}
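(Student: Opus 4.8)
\medskip
\noindent\emph{Proof plan.} The plan follows the two-stage route announced in the abstract. It is cleanest to establish first the vertex-pointed version of~\eqref{eq:F}, namely $\frac{\mathrm{d}}{\mathrm{d}t}G_{\ell_1,\dots,\ell_r}=\big(\prod_{i=1}^{r}\alpha(\ell_i)\big)\cdot\frac1s\cdot\frac{\mathrm{d}^{\,r-1}}{\mathrm{d}t^{\,r-1}}R^{s}$, and then recover~\eqref{eq:F} by integration, the constant of integration vanishing because $G_{\ell_1,\dots,\ell_r}$ carries no term of too small $t$-degree (this is also what gives a meaning to the formal antiderivative occurring at $r=1$). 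The first stage is the Bouttier--Di Francesco--Guitter (BDG) bijection: applied to a vertex-pointed even map with $r$ boundary-faces of degrees $\ell_1,\dots,\ell_r$, it produces a mobile (a labelled plane tree) carrying $r$ distinguished boundary-contours of lengths $\ell_1,\dots,\ell_r$. The datum attached locally to a boundary-contour of length $\ell$ --- the position of its marked corner, together with the profile of labels read around the contour, a closed lattice path whose step set depends on the parity of $\ell$ --- is independent of the rest of the mobile and is enumerated by the factor $\alpha(\ell)$. Pulling out these $r$ contours leaves the generating function of a family of \emph{forests of mobiles}, each mobile being counted by the series $R$ of~\eqref{eq:seriesR} when suitably rooted, while a corner count forces the number of mobiles to equal $s=\tfrac12\sum_i\ell_i$. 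Thus~\eqref{eq:F} is reduced to a purely tree-level identity.

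\medskip
\noindent\emph{Pitman-type aggregation.} The tree-level identity to establish is that the above forest generating function equals $\frac1s\,\frac{\mathrm{d}^{\,r-1}}{\mathrm{d}t^{\,r-1}}R^{s}$. Here I would use a coalescence bijection in the spirit of Pitman's construction of a tree from a forest. For $r\ge2$ one has $\frac1s\,\frac{\mathrm{d}^{\,r-1}}{\mathrm{d}t^{\,r-1}}R^{s}=\frac{\mathrm{d}^{\,r-2}}{\mathrm{d}t^{\,r-2}}\big(R^{s-1}R'\big)$, and $R^{s-1}R'$ is the series of a \emph{cyclic} arrangement of $s$ mobiles bearing one marked $t$-leaf --- cutting a cyclic sequence of $s$ rigid objects at a marked interior atom returns an ordered sequence, which is precisely the meaning of $R^{s-1}R'=\frac1s\frac{\mathrm{d}}{\mathrm{d}t}R^{s}$ --- while each further $\frac{\mathrm{d}}{\mathrm{d}t}$ marks one more $t$-leaf, differentiating in $t$ being exactly the operation of pointing a $t$-leaf ($t$ being the ``$t$'' in $R=t+\cdots$). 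The $r$ boundary-contours split the cyclic arrangement of the $s$ mobiles into $r$ groups, and making the whole object connected requires binding these $r$ groups by $r-1$ identifications, each performed at a $t$-leaf and thus consuming one marked $t$-leaf: $r-2$ of them are recorded by the $r-2$ derivatives of~\eqref{eq:F}, and the last one, the closing of the cycle, is recorded by the $\frac1s$. Making this bijection explicit gives~\eqref{eq:F}.

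\medskip
\noindent\emph{Quasi-bipartite case and checks.} The quasi-bipartite case (exactly two of the $\ell_i$ odd) runs along the same lines, using the variant of the BDG bijection adapted to two odd faces: the label profiles along the two odd contours become lattice paths of a slightly different type, the per-contour count is again $\alpha(\ell_i)$, and one arrives at the very same formula~\eqref{eq:F}. Several special cases provide reassurance: setting all $x_i=0$ gives $R=t$, so~\eqref{eq:F} becomes $\prod_i\alpha(\ell_i)\cdot\frac{(s-1)!}{(s-r+2)!}\,t^{\,s-r+2}$, whose coefficient of $t^{\,s-r+2}$ is exactly Tutte's slicings number of~\eqref{eq:slicings}, with $e=s$ and $v=s-r+2$; and for $r=2$ and $r=3$ one recovers Eynard's~\eqref{eq-eynard1} and~\eqref{eq-eynard2} using $\gamma^{2}=R$ and $y'(1)=\gamma/R'$ --- for $r=3$, for instance, $\frac1s\frac{\mathrm{d}}{\mathrm{d}t}R^{s}=R^{s-1}R'=\gamma^{\,2s-1}/y'(1)$, a genuine power series even though $\gamma^{2s-1}$ and $y'(1)$ separately are not.

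\medskip
\noindent\emph{Main obstacle.} The difficulty is not the existence of the two reductions but the bookkeeping they demand: showing that the boundary data genuinely \emph{factorizes} as $\prod_i\alpha(\ell_i)$ with no residual interaction between distinct contours, and that the coalescence produces exactly the normalization $\frac1s\,\frac{\mathrm{d}^{\,r-2}}{\mathrm{d}t^{\,r-2}}$ rather than a nearby variant --- this is where the $\pm1$ ambiguities in the number of derivatives and in the exponent of $R$ are easiest to mishandle, and where the Pitman-type argument has to be carried out with care. A direct Lagrange-inversion computation of $[t^{n}]$ of the right-hand side of~\eqref{eq:F}, using that $R=t\,\psi(R)$ with $\psi(R)=\big(1-\sum_{i\ge1} x_i\binom{2i-1}{i}R^{i-1}\big)^{-1}$, provides an independent check of that constant.
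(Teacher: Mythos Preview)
Your bipartite half is in the right spirit and close to the paper's route: BDG turns the vertex-pointed map into a mobile, the $r$ boundary-faces become $r$ marked black vertices, and the local data at a marked black vertex of degree $2a$ (choice of root corner and placement of buds) contributes exactly $2a\binom{2a-1}{a}=\alpha(2a)$. After stripping these, one is left with a ``pruned'' mobile whose marked black vertices have degrees $a_1,\dots,a_r$, and a Pitman-type coalescence relates these to forests of $s=\sum a_i$ rooted mobiles with $r-1$ marked white vertices, giving $\frac{\mathrm{d}^{r-1}}{\mathrm{d}t^{r-1}}R^{s}$. Your ``cyclic arrangement'' reading of $\frac1s\frac{\mathrm{d}}{\mathrm{d}t}R^{s}$ is not how the paper organizes it (the paper uses a direct $(r{-}1)!$-to-$(r{-}1)!$ correspondence between rooted pruned mobiles and marked forests), and your informal bookkeeping of which derivative is ``the closing of the cycle'' versus which are ``identifications at $t$-leaves'' would need to be made precise; but this part is fixable.

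The real gap is the quasi-bipartite case. Your claim that ``the label profiles along the two odd contours become lattice paths of a slightly different type, the per-contour count is again $\alpha(\ell_i)$, and one arrives at the very same formula'' is not correct as stated, because the two odd boundaries do \emph{not} contribute independently in the mobile. Under BDG, a quasi-bipartite map yields a mobile in which the two odd-degree black vertices are joined by a nonempty path of black--black edges; this path is a genuinely nonlocal interaction between the two odd boundaries, and there is no variant of BDG that dissolves it into two separate local contour-counts. The paper handles this by a reduction to the bipartite case: it isolates the ``middle-part'' (the piece carrying the black--black path), shows via an auxiliary bijection with \emph{blossoming trees} that this middle-part is equinumerous with the middle-part one gets between two even marked vertices (up to attaching one extra rooted mobile), and deduces $\widehat{\cB}_{2a_1,2a_2}\simeq\widehat{\cQ}_{2a_1-1,2a_2+1}$ at the level of pruned mobiles. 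Only after this reduction does the product $\prod_i\alpha(\ell_i)$ emerge. Without an argument of this kind, your proposal for the quasi-bipartite case does not go through; the factorization you assert is exactly the nontrivial content that the blossoming-tree lemma supplies.
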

Our formula covers all parity cases for the $\ell_i$ when $r\leq 3$. 
For $r=1$, the formula reads $G_{2a}\ \!\!'=\binom{2a}{a}R^a$, which is a direct consequence of the bijection with mobiles. 
For $r=2$ the formula reads 
$G_{\ell_1,\ell_2}=\alpha(\ell_1)\alpha(\ell_2)R^s/s$ (which simplifies the constant in~\eqref{eq-eynard1}). 
And for $r=3$ the formula reads $G_{\ell_1,\ell_2,\ell_3}=\alpha(\ell_1)\alpha(\ell_2)\alpha(\ell_3)R'R^{s-1}$. Note that~\eqref{eq:F} also ``contains" the 
formula of slicings~\eqref{eq:slicings}, by noticing that $A[\ell_1,\ldots,\ell_r]$ 
equals the evaluation of $G_{\ell_1,\ldots,\ell_r}$ at $\{t=1;x_1=0,x_2=0,\ldots\}$, which equals $(\prod_{i=1}^r\alpha(\ell_i))\cdot\frac{(s-1)!}{(s-r+2)!}$. 
Hence,~\eqref{eq:F} can be seen as an ``interpolation'' between the two formulas of Eynard  given above 
and Tutte's formula of slicings.   
In addition,~\eqref{eq:F} has the nice feature 
that the expression of $G_{\ell_1,\ldots,\ell_r}$ splits into two factors: (i) 
a \emph{constant factor} which itself is a product of independent contributions from every boundary,   
(ii) a \emph{series-factor} that just depends on the number of boundaries and the total length of the boundaries. 

Even though the coefficients of $G_{\ell_1,\ldots,\ell_r}$ have simple binomial-like expressions (easy to obtain from~\eqref{eq:slicings}), it does not explain 
why at the level of generating functions the expression~\eqref{eq:F} is so simple (and
it would not be obvious to guess ~\eqref{eq:F} by just looking at~\eqref{eq:slicings}). 
Relying
on the bijection with mobiles (recalled in Section~\ref{sec:first}), we give a transparent proof of~\eqref{eq:F}. In the bipartite case, our construction (described in Section~\ref{sec:bip}) 
starts from a forest of mobiles with some marked vertices, and then we aggregate 
the connected components so as to obtain a single mobile with some marked black vertices of fixed degrees (these black vertices correspond to the boundary-faces). 
The idea of aggregating connected components as we do is reminiscent of a construction due to Pitman~\cite{Pitman11},  
giving for instance 
a very simple proof (see~\cite[Chap.~26]{AigZie}) that the number of Cayley trees with $n$ nodes is $n^{n-2}$. 
Then we show in Section~\ref{sec:quasi} that the formula in the quasi-bipartite case can 
be obtained by a reduction to the bipartite case~\footnote{It would be interesting as a next step 
to search for a simple formula for $G_{\ell_1,\ldots,\ell_r}$ 
when four or more of the $\ell_i$ are odd (however, as noted by Tutte~\cite{Tutte62},
the coefficients do not seem to be that simple, they have large prime factors).} 
This reduction is done bijectively with the help of auxiliary trees called \emph{blossoming trees}. 
Let us mention that these blossoming trees have been introduced in another bijection with bipartite maps~\cite{Sc97}.  
We could alternatively use this bijection to prove Theorem~\ref{theo:main} in the bipartite
case (none of the $\ell_i$ is odd). But in order to encode quasi-bipartite maps, 
one would have to use extensions of 
this bijection~\cite{BMS02,BDG02} in which the encoding would become rather involved. 
This is the reason why we rely on bijections with mobiles, as given in~\cite{BDG04}.

In the second part of the article, we extend the formula of Theorem~\ref{theo:main} to constellations and quasi-constellations, families of maps which naturally generalize bipartite and quasi-bipartite maps. 
Define an \emph{hypermap} as an eulerian map (map with all faces of even degree)
 whose faces are bicolored ---there are dark faces and light faces--- such that any edge has a dark face on one side and a light face on the other side~\footnote{Hypermaps have several equivalent definitions in the literature; our definition
coincides with the one of Walsh~\cite{Nedela07}, by turning each dark face into a star centered at a dark vertex;
and coincides with the definitions of Cori and of James~\cite{SiWo08} 
where hypervertices are collapsed into vertices.}.  Define a \emph{$p$-hypermap} as a hypermap whose dark faces are of degree $p$ (note that classical maps
correspond to $2$-hypermaps, since each edge can be blown into a dark face of degree $2$). Note that the degrees of light faces in a $p$-hypermap add up to a multiple
of $p$.  A \emph{$p$-constellation}
is a $p$-hypermap such that the degrees of light faces are multiples of $p$, and a \emph{quasi $p$-constellation} is a $p$-hypermap such that exactly two light faces
have a degree not multiple of $p$. By the identification with maps, $2$-constellations and quasi $2$-constellations correspond respectively 
to bipartite maps and quasi-bipartite maps. 

\begin{figure}
\begin{center}
\includegraphics[width=0.8\linewidth]{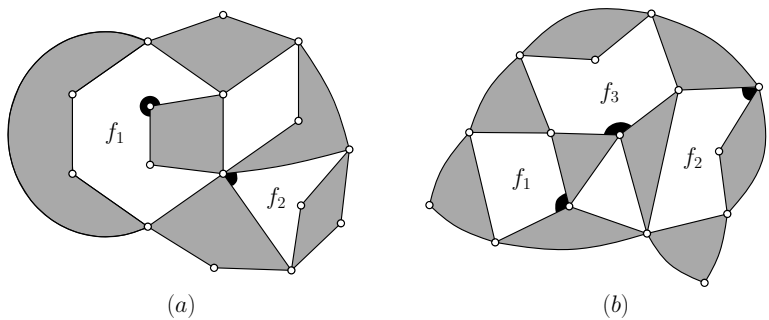}
\end{center}
\caption{(a)~A $4$-constellation with $2$ boundaries $f_1, f_2$ of respective degree $8, 4$. (b)~A quasi-$3$-constellation with $3$ boundaries $f_1, f_2, f_3$ of respective degree $4, 5, 6$.}
\label{fig:c_qc}
\end{figure}

Bouttier, Di Francesco and Guitter \cite{BDG04} also described a bijection for hypermaps, 
in correspondence with more involved mobiles (recalled in Section~\ref{sec:eulerian}). 
When applied to $p$-constellations, this bijection yields the following: if we denote by $R_p=R_p(t)=R_p(t;x_1,x_2,\ldots)$ the generating function specified by 
\begin{equation}\label{eq:seriesRp}
R_p=t+\sum_{i\geq 1}x_i\binom{pi-1}{i}R_p^{(p-1)i}.
\end{equation}
and by $C_p(t)=C_p(t;x_1,x_2,\ldots)$ the generating function of rooted $p$-constellations (\textit{i.e.}, $p$-constellations with a marked corner incident to a light face) where $t$ marks the number of vertices and $x_i$ marks the number of light faces of degree $pi$ for $i\geq 1$, then the bijection of~\cite{BDG04} ensures that $C_p'(t)=\frac{p}{p-1}R_p(t)$.

We use this bijection and tools from Sections~\ref{sec:bip} and~\ref{sec:quasi} to obtain the following formula for the generating function of  
constellations (proved in Section~\ref{sec:constel}) and quasi-constellations (proved in Section~\ref{sec:qconstel}). 
Let $G^{(p)}_{\ell_1,\ldots,\ell_r}:=G^{(p)}_{\ell_1,\ldots,\ell_r}(t;x_1,x_2,\ldots)$ 
be the generating function of $p$-hypermaps with $r$ (numbered) boundaries $f_1,\ldots,f_r$ of degrees $\ell_1,\ldots,\ell_r$, whose non-marked faces have degrees a 
multiple of $p$, 
where $t$ marks the number of vertices and $x_i$ marks
the number of non-boundary faces of degree $pi$. Then: 

\begin{thm}\label{theo:main2}
When none or only two of the $\ell_i$ are not multiple of $p$,  
then the following formula holds: 
\begin{equation}\label{eq:Fp}
G^{(p)}_{\ell_1,\ldots,\ell_r}=\Big(\prod_{i=1}^r\alpha(\ell_i)\Big)\cdot\frac{c}{s}\cdot\frac{\mathrm{d}^{r-\!2}}{\mathrm{d}t^{r-\!2}}R_p^s,\ 
\end{equation}
$\textrm{where}\ \displaystyle{\alpha(\ell)=\frac{\ell!}{\lfloor \ell/p \rfloor!\left(\ell - \lfloor\ell/p \rfloor -1\right)!}},\ s=\frac{p-1}{p}(\ell_1+\cdots+\ell_r),\ R_p \textrm{ is given by~\eqref{eq:seriesRp}},$\\
\textrm{and } $c=\left\{
\begin{array}{l}
  1, \textrm{~when every } \ell_i \textrm{~is a multiple of } p,\\
  p-1, \textrm{~when exactly two }\ell_i \textrm{~are not multiple of }p.
\end{array}
\right.$  
\end{thm}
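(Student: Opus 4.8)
The plan is to reduce Theorem~\ref{theo:main2} to the two already-announced ingredients: the combinatorial tools developed for bipartite maps (Section~\ref{sec:bip}) and the reduction from quasi-bipartite to bipartite maps (Section~\ref{sec:quasi}), now carried out with the more elaborate mobiles of~\cite{BDG04} that encode $p$-hypermaps. First I would recall the bijection between $p$-constellations with boundaries and the corresponding mobiles, so that $G^{(p)}_{\ell_1,\ldots,\ell_r}$ becomes a generating function of forests of such mobiles in which $r$ black vertices of fixed degrees $\ell_1,\ldots,\ell_r$ are distinguished; the series $R_p$ of~\eqref{eq:seriesRp} plays the role that $R$ played in the $p=2$ case, and the identity $C_p'(t)=\tfrac{p}{p-1}R_p(t)$ gives the $r=1$ base case $({G^{(p)}_\ell})' = \binom{?}{?}R_p^{s}$ after checking that $\alpha(\ell)$ specializes correctly when $p\mid\ell$ (here $s=(p-1)\ell/p$ and $\alpha(\ell)=\ell!/((\ell/p)!(\ell-\ell/p-1)!)$, which is exactly the relevant binomial-type coefficient coming from the degree-$\ell$ light face).

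Next, for the case where all $\ell_i$ are multiples of $p$ (so $c=1$), I would replay the aggregation argument of Section~\ref{sec:bip} verbatim at the level of mobiles: start from a forest of $r$ rooted mobiles, each carrying one marked black vertex of the prescribed degree, and aggregate the connected components into a single mobile by the Pitman-style gluing. The weight bookkeeping is the same as in the bipartite proof once one observes that attaching a new subtree at a black vertex of degree $\ell_i$ contributes a factor proportional to a power of $R_p$ dictated by the degree, and that the number of admissible gluings introduces the differential operator $\frac{\mathrm d^{r-2}}{\mathrm dt^{r-2}}$ acting on $R_p^{s}$. The factor $1/s$ and the product $\prod\alpha(\ell_i)$ arise exactly as in Theorem~\ref{theo:main}; the only genuinely new point to verify is that the local structure of mobiles around a black vertex of degree $\ell$ in a $p$-hypermap contributes the series $R_p$ with the right exponent, which follows from the explicit description of the BDG mobiles for hypermaps recalled in Section~\ref{sec:eulerian}.

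Finally, for the quasi-constellation case (exactly two of the $\ell_i$, say $\ell_1,\ell_2$, not multiples of $p$, giving $c=p-1$), I would adapt the reduction of Section~\ref{sec:quasi}. The idea is to use auxiliary blossoming trees to ``repair'' the two defective light faces: one shows bijectively that a quasi-$p$-constellation with boundaries of lengths $\ell_1,\ell_2$ (non-multiples of $p$) and $\ell_3,\ldots,\ell_r$ (multiples of $p$) can be encoded by a genuine $p$-constellation-type object with an extra marked structure, and that this encoding multiplies the count by exactly $p-1$ relative to the formula one would get by naively applying the bipartite-type formula. This accounts for the constant $c=p-1$ while leaving the series-factor $\frac1s\frac{\mathrm d^{r-2}}{\mathrm dt^{r-2}}R_p^{s}$ and the product $\prod_{i}\alpha(\ell_i)$ (now with $\alpha$ evaluated at odd-type arguments $\ell_1,\ell_2$) untouched; here one must check that $\alpha(\ell)=\ell!/(\lfloor\ell/p\rfloor!(\ell-\lfloor\ell/p\rfloor-1)!)$ is the correct local contribution of a boundary whose degree is not a multiple of $p$.

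The main obstacle I anticipate is the quasi-constellation reduction: unlike the $p=2$ situation, where a defective (odd) face is handled by a single blossoming-tree surgery, for general $p$ a light face of degree $\ell\not\equiv 0\ (\mathrm{mod}\ p)$ can be ``off by'' any residue $1,\ldots,p-1$, and one must set up the blossoming-tree correspondence so that the two defective faces are simultaneously corrected in a way that is both bijective and produces the clean multiplicative constant $p-1$ (rather than something depending on the residues of $\ell_1,\ell_2$). Verifying that the degree distribution of the repaired object still falls under the already-proven all-multiples-of-$p$ case, and that no spurious factor of $t$ or $R_p$ is introduced by the surgery, is where the careful work lies; the aggregation step itself should be a routine transfer of the Section~\ref{sec:bip} argument.
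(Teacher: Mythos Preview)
Your outline matches the paper's approach: the constellation case (Section~\ref{sec:constel}) is indeed a direct transcription of the aggregation argument (with the forest having $s=(p-1)\sum a_i$ components, not $r$ as you wrote, but otherwise identical), and the quasi-constellation case (Section~\ref{sec:qconstel}) is handled via blossoming $p$-trees exactly as you propose.

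Regarding the obstacle you flag, the paper resolves it as follows. In a quasi-$p$-mobile the non-regular edges form a path between the two non-regular light squares, and the weights along this path are \emph{completely determined} by the residue $d$ of $\ell_1$ modulo $p$ (they alternate $p-d,d,p-d,\ldots,d$; see Claim~\ref{claim:alt}); hence the weights can simply be forgotten when manipulating middle-parts. The factor $p-1$ then appears uniformly, independent of $d$: it comes from the $p-1$ ways to place the $p-2$ buds on the intermediate dark square vertex adjacent to $v_2$ when one reinterprets the middle-part as a blossoming $p$-tree with a marked leaf (Lemma~\ref{lem:pmiddle}, giving $\cH_p\simeq(p-1)\cdot\cT_p'$). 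So the constant is clean precisely because the residue information is absorbed into the (deterministic) weight pattern and never interacts with the counting. One further point you did not anticipate: just as in the bipartite case, a separate short argument (Lemma~\ref{lem:Qp11}, the analogue of Lemma~\ref{lem:Q11}) is needed when both non-regular boundaries have degree strictly less than $p$, since then one cannot borrow a round neighbour from $v_1$ to set up the reduction of Lemma~\ref{lem5}.
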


First note that Theorem~\ref{theo:main} is the direct application of Theorem~\ref{theo:main2} when $p=2$.
Moreover, this yields the following extension of Tutte's slicing formula:

\begin{coro}
For $p\geq 2$, let $A^{(p)}[\ell_1,\ldots,\ell_r]$ be the number of $p$-hypermaps with exactly $r$ numbered light faces $f_1,\ldots,f_r$ of respective 
degrees $\ell_1,\ldots,\ell_r$, each light face having a marked corner.\\
When none or only two of the $\ell_i$ are not multiple of $p$ ($p$-constellations and quasi-$p$-constellations, respectively), then:

\begin{equation}\label{eq:slicings2}
A^{(p)}[\ell_1,\ldots,\ell_r]=c\frac{(\epsilon-d-1)!}{v!}\prod_{i=1}^r\alpha(\ell_i),\ \ \mathrm{with}\ \alpha(\ell):=\frac{\ell!}{\lfloor\ell/p\rfloor!(\ell-\lfloor\ell/p\rfloor-1)!},
\end{equation}
where $\epsilon=\sum_{i=1}^r\ell_i$ is the number of edges, $\displaystyle{d=\frac{\sum_{i=1}^r\ell_i}{p}}$ 
is the number of dark faces, and $v=\epsilon-d-r+2$ is the number of vertices,\\ 
and $c=\left\{
\begin{array}{l}
  1, \textrm{~when every } \ell_i \textrm{~is a multiple of } p,\\
 p-1, \textrm{~when exactly two }\ell_i \textrm{~are not multiple of }p.
\end{array}
\right.$  
\end{coro}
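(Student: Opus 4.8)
The plan is to obtain the Corollary as a one-line specialization of Theorem~\ref{theo:main2}, supplemented by Euler's relation. First I would note that $A^{(p)}[\ell_1,\ldots,\ell_r]$ is nothing but the evaluation $G^{(p)}_{\ell_1,\ldots,\ell_r}(1;0,0,\ldots)$. Indeed, setting $x_i=0$ for every $i\geq 1$ in $G^{(p)}_{\ell_1,\ldots,\ell_r}$ discards every $p$-hypermap having at least one non-boundary (light) face, so that what survives is exactly the family of $p$-hypermaps whose light faces are precisely the $r$ numbered boundaries $f_1,\ldots,f_r$ of degrees $\ell_1,\ldots,\ell_r$, with a marked corner in each; this is the family enumerated by $A^{(p)}[\ell_1,\ldots,\ell_r]$. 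In such a map, every edge is incident to exactly one dark face, and dark faces have degree $p$, so $\epsilon=\sum_{i=1}^r\ell_i$ and $d=\epsilon/p$ are forced, and then Euler's relation $v-\epsilon+(d+r)=2$ gives $v=\epsilon-d-r+2$. In particular the number of vertices is constant across the family, so evaluating $t$ at $1$ simply collects all these maps.

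Next I would substitute into~\eqref{eq:Fp}. When all $x_i$ vanish, the defining relation~\eqref{eq:seriesRp} degenerates to $R_p=t$, hence $R_p^s=t^s$ and
\[
\frac{\mathrm{d}^{r-2}}{\mathrm{d}t^{r-2}}R_p^s=\frac{s!}{(s-r+2)!}\,t^{\,s-r+2},
\]
where for $r\leq 2$ the operator $\frac{\mathrm{d}^{r-2}}{\mathrm{d}t^{r-2}}$ is read as the iterated antiderivative with vanishing constant term --- the reading dictated by the generating-function interpretation, since $R_p^s$ has valuation $s$ in $t$. Evaluating at $t=1$ and plugging into~\eqref{eq:Fp} yields
\[
A^{(p)}[\ell_1,\ldots,\ell_r]=\Big(\prod_{i=1}^r\alpha(\ell_i)\Big)\cdot\frac{c}{s}\cdot\frac{s!}{(s-r+2)!}=c\,\frac{(s-1)!}{(s-r+2)!}\prod_{i=1}^r\alpha(\ell_i),
\]
with $\alpha$ and $c$ exactly as in Theorem~\ref{theo:main2}.

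It then remains to rewrite $s$ in terms of the combinatorial parameters. Since $s=\frac{p-1}{p}(\ell_1+\cdots+\ell_r)=\epsilon-\epsilon/p=\epsilon-d$, we get $s-1=\epsilon-d-1$ and $s-r+2=\epsilon-d-r+2=v$, so the last display is precisely~\eqref{eq:slicings2}; and for $p=2$, where $c=1$, $s=\sum_i\ell_i/2=e$ and $v=e-r+2$, one recovers Tutte's formula~\eqref{eq:slicings}. I do not expect a real obstacle in this argument, which is a routine specialization of Theorem~\ref{theo:main2}; the only two points needing a little care are checking that $x_i=0$ leaves exactly the maps counted by $A^{(p)}$ with a rigid vertex count (handled by Euler), and reading $\frac{\mathrm{d}^{r-2}}{\mathrm{d}t^{r-2}}$ for $r\leq 2$ as an antiderivative with zero constant term, so that the degenerate $r=1$ case (where $v=s+1$) is covered uniformly.
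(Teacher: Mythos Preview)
Your argument is correct and follows exactly the route the paper indicates: specialize Theorem~\ref{theo:main2} at $t=1$, $x_i=0$ (so that $R_p=t$), and identify $s=\epsilon-d$ and $s-r+2=v$ via Euler's relation. The paper states this specialization in one line without spelling out the intermediate computations you provide, but the approach is the same.
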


One gets~\eqref{eq:slicings2} out of~\eqref{eq:Fp} by taking the evaluation of $G^{(p)}_{\ell_1,\dots,\ell_r}$ at $\{t=1;x_1=0,x_2=0,\ldots\}$.
The expression of the numbers $A^{(p)}[\ell_1,\ldots,\ell_r]$ when all $\ell_i$ are multiples of $p$ 
has been discovered by Bousquet-M\'elou and Schaeffer \cite{BMS00}, but to our knowledge, the expression for quasi-constellations has not been given before 
(though it could also be obtained from Chapuy's results~\cite{Ch09}, see the paragraphs after Lemma~\ref{lem:middle} and Lemma~\ref{lem:pmiddle}).    

\vspace{.4cm}

\noindent {\bf Note.} This is the full version of a conference paper~\cite{CoFu12} 
entitled ``A simple formula for the series of bipartite and quasi-bipartite maps with boundaries'' 
presented at the conference FPSAC'12. In particular we extend here the formulas obtained in~\cite{CoFu12} to constellations and quasi-constellations.
We would like to mention that 
very recently Bouttier and Guitter~\cite{BG13} have found extensions of the formulas from~\cite{CoFu12} in another direction, to so-called $2b$-irreducible bipartite maps
(maps with all faces of degrees at least $2b$ and where all non-facial cycles have length at least $2b+2$).

\vspace{.4cm}

\noindent {\bf Notation.} We will often use the following notation: for $\cA$ and $\cB$ two 
(typically infinite) combinatorial classes and $a$ and $b$ two integers, 
write $a\cdot\cA\simeq b\cdot\cB$ if there is a ``natural'' 
$a$-to-$b$ correspondence between $\cA$ and $\cB$
(the correspondence 
will be explicit each time the notation is used) that preserves several parameters
(which will be listed when the notation is used, typically the correspondence will
preserve the face-degree distribution). 

\section{Bijection between vertex-pointed maps and mobiles}

\label{sec:first}

We recall here a well-known bijection due to Bouttier, Di Francesco and Guitter~\cite{BDG04}
between vertex-pointed planar maps and a certain family of decorated trees called mobiles.
We actually follow a slight reformulation of the bijection given in~\cite{BernardiFusy11}. 
A \emph{mobile} is a plane tree (i.e., a planar map with one face) with vertices either black or white, with 
dangling half-edges ---called buds--- at black vertices, such that there is no white-white edge, and such that each black vertex has as many buds as white neighbours.The \emph{degree} of a black vertex $v$ is the total number 
of incident half-edges (including the buds) incident to $v$. 
Starting from a planar map $G$ with a pointed vertex $v_0$, and where the vertices
of $G$ are considered as \emph{white}, one obtains a mobile $M$ as follows (see Figure~\ref{fig:b_mobiles}):

\begin{itemize}
\item
Endow $G$ with its geodesic orientation from $v_0$ (i.e., an edge $\{v,v'\}$ is oriented from $v$ to $v'$
if $v'$ is one unit further than $v$ from $v_0$, and is left unoriented if $v$ and $v'$ are at the same distance from $v_0$). 
\item
Put a new black vertex in each face of $G$. 
\item
Apply the following local rules to each edge (one rule for oriented edges
and one rule for unoriented edges) of $G$:  


\hspace{2.8cm}\includegraphics[width=4cm]{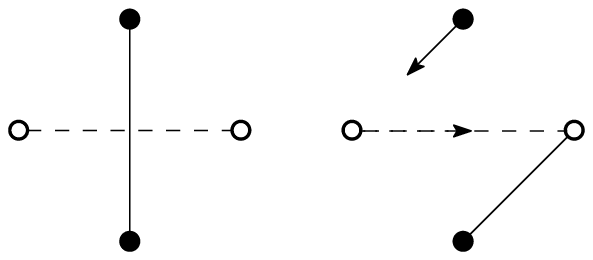}


\item
Delete the edges of $G$ and the vertex $v_0$. 
\end{itemize}

\begin{figure}
\begin{center}
\includegraphics[width=\linewidth]{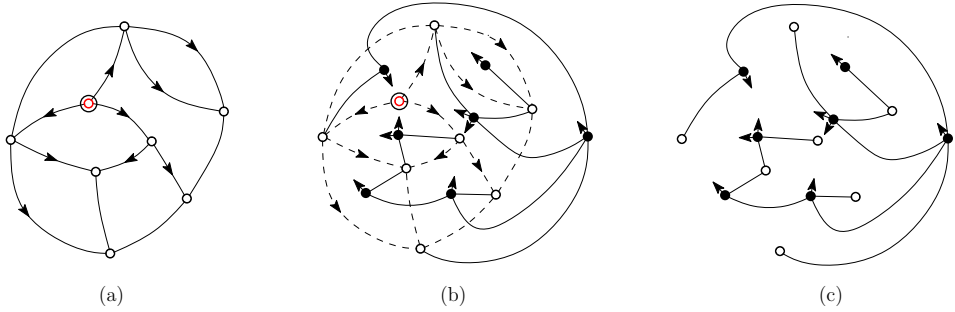}
\end{center}
\caption{(a)~A vertex-pointed map endowed with the geodesic orientation (with respect to the marked vertex).
(b) The local rule is applied to each edge of the map. (c)~The resulting mobile.}
\label{fig:b_mobiles}
\end{figure}

\begin{thm}[Bouttier, Di Francesco and Guitter~\cite{BDG04}]\label{theo:bdg}
The above construction is a bijection between vertex-pointed maps and mobiles. Each non-root vertex in the map corresponds to a white vertex in the mobile.
Each face of degree $i$ in the map corresponds to a black vertex of degree $i$ in the mobile. 
\end{thm}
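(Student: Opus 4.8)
The plan is to establish the bijection by the usual four-step route: (i) show the construction is well-defined and outputs a plane graph with the announced vertex- and face-counts; (ii) show this graph is a tree; (iii) check it satisfies the defining conditions of a mobile; (iv) exhibit an explicit inverse and verify the two maps are mutually inverse. Throughout I would rely on the precise form of the local rules and the embedding conventions as fixed in~\cite{BDG04} and~\cite{BernardiFusy11}.

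For (i), since the local rules modify only a neighbourhood of each edge of $G$, it suffices to check that around every corner of $G$ the two pieces produced by the rules for the two edges bounding that corner fit together consistently; this makes the output a well-defined graph embedded in the sphere, whose vertex set is the disjoint union of a set of white vertices --- the vertices of $G$ other than $v_0$ --- and a set of black vertices in bijection with the faces of $G$. A local inspection of the rules along the boundary of a fixed face $f$ shows that the black vertex placed in $f$ collects exactly $\deg(f)$ incident half-edges (true half-edges and buds together), which gives the degree statement, and that each edge of $G$ produces exactly one true edge of the output (an oriented edge yielding one white--black edge and one bud, a flat edge yielding one black--black edge). Hence, writing $V,E,F$ for the numbers of vertices, edges and faces of $G$, the output has $E$ true edges on $(V-1)+F$ vertices.

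Step (ii) is the heart of the matter. By Euler's relation $V-E+F=2$ the output has $(V-1)+F = E+1$ vertices and $E$ edges, so it is a tree precisely when it is connected (equivalently, when it is acyclic). I would prove connectivity by a \emph{descent argument} tied to the geodesic orientation: assign to each white vertex $v$ the value $\mathrm{d}_G(v_0,v)$, and to each black vertex the minimum of these values over the vertices of its face. The local rules are arranged so that every white vertex of value $\geq 1$ is joined in the output to a black vertex of strictly smaller value (the black vertex sitting in a face met by the first edge of a geodesic from $v$ to $v_0$), and every black vertex is joined to a white vertex attaining its value; following such edges one descends from any vertex to a neighbourhood of $v_0$, and a separate check that the black vertices of the faces around $v_0$ lie in a single component completes the argument. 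I expect this to be the main obstacle: it is exactly here that the choice of the \emph{geodesic} orientation (rather than an arbitrary acyclic one) is essential, and the bookkeeping has to be done carefully --- in particular for flat (unoriented) edges, which also account for the black--black edges when $G$ has faces of odd degree.

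For (iii), the absence of white--white edges is immediate, since every true edge produced joins a white vertex to a black one; and the condition that each black vertex carry as many buds as white neighbours falls out of the same local analysis of a face used in (i) (going around $\partial f$ one produces, in the appropriate alternating pattern, as many edges to white vertices as buds). Finally, for (iv), I would spell out the reverse local rules: reinsert $v_0$, and use the cyclic order of half-edges at each black vertex to pair each bud with a corner --- by a matching-of-brackets rule read off the contour of the tree --- so as to recover one edge of $G$ per true edge and per bud--corner pair, the faces of $G$ being recovered as the regions swept out around the black vertices. One then checks that map $\to$ mobile $\to$ map and mobile $\to$ map $\to$ mobile are the respective identities; together with (ii)--(iii) this yields the stated bijection, with the vertex/face correspondence already recorded in (i).
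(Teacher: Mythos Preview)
The paper does not prove this theorem: it is stated as a result of Bouttier, Di~Francesco and Guitter~\cite{BDG04} (in the reformulation of~\cite{BernardiFusy11}) and is used as a black box. So there is no proof in the paper to compare your proposal against.

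That said, your outline is a faithful sketch of how the proof goes in the cited references, with one inaccuracy and one place where the real work is hidden. In step~(iii) you write that ``every true edge produced joins a white vertex to a black one''; this is false in general, since flat (unoriented) edges of $G$ produce black--black edges of the mobile (this is precisely how quasi-bipartite mobiles arise, cf.\ Claim~4 of the paper). Your conclusion that there are no white--white edges is nonetheless correct, since the rules only ever produce white--black or black--black edges. More importantly, the assertion that each black vertex has as many buds as white neighbours is not just a matter of an ``alternating pattern'': the genuine reason is that, walking once around the boundary of a face $f$, the distance label $\mathrm{d}_G(v_0,\cdot)$ increases by~$1$ as often as it decreases by~$1$ (since one returns to the starting vertex), and the local rules are set up so that the increases give white--black edges while the decreases give buds. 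This is the key combinatorial observation, and it also underlies the degree statement in~(i).

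Your step~(ii) identifies correctly where the substance lies; the descent argument you propose can be made to work, though in~\cite{BDG04,BernardiFusy11} acyclicity/connectivity is typically obtained either by the label argument directly or as a byproduct of exhibiting the inverse. The inverse construction you allude to in~(iv) --- matching buds to corners along the contour word of the tree --- is indeed the standard one.
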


A mobile is called \emph{bipartite} when all black vertices have even degree, and is called \emph{quasi-bipartite} when all black vertices have even degree except for two
which have odd degree. Note that bipartite (resp. quasi-bipartite) mobiles correspond to bipartite (resp. quasi-bipartite) vertex-pointed maps. 

\begin{claim}
A mobile is bipartite iff it has no black-black edge. A mobile is quasi-bipartite iff the set of black-black edges forms a non-empty path whose
extremities are the two black vertices of odd degrees. 
\end{claim}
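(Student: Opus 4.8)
The plan is to analyze the parity of black vertex degrees in terms of the edge structure of the mobile, exploiting the constraint that every black vertex has exactly as many buds as white neighbours.

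\textbf{Step 1: Degree parity via black-black edges.} First I would fix a black vertex $v$ of degree $d$. Its incident half-edges split into three types: buds, edges to white vertices, and edges to black vertices. Let $b$ be the number of buds, $w$ the number of white neighbours (equivalently white-edges, since there are no multiple edges issues to worry about in a tree), and $k$ the number of incident black-black edges. By the defining property of mobiles, $b = w$, so $d = b + w + k = 2w + k$. Hence the degree of $v$ has the same parity as $k$, the number of black-black edges incident to $v$. Consequently, $v$ has even degree iff an even number of black-black edges are incident to it, and odd degree iff an odd number are.

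\textbf{Step 2: The bipartite case.} If the mobile has no black-black edge, then $k=0$ at every black vertex, so every black vertex has even degree and the mobile is bipartite. Conversely, if the mobile is bipartite, then $k$ is even at every black vertex; I would then argue that the subgraph $H$ induced by the black-black edges is a forest (being a subgraph of a tree) in which every vertex has even degree, and a forest with all degrees even can only be edgeless (a nonempty forest has a leaf, i.e.\ a vertex of degree $1$). Hence there is no black-black edge.

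\textbf{Step 3: The quasi-bipartite case.} Suppose the mobile is quasi-bipartite, with exactly two black vertices $u_1, u_2$ of odd degree. Then in the subgraph $H$ of black-black edges, $u_1$ and $u_2$ have odd degree and all other black vertices have even degree. Since $H$ is a forest, consider its connected components; a tree in which the number of odd-degree vertices is $0$ must be a single vertex (edgeless), and a tree has an even number of odd-degree vertices (handshake), so exactly one component contains both $u_1$ and $u_2$ and that component is the only nontrivial one. Within that tree, a leaf must be a vertex of odd degree ($1$), so the only leaves are $u_1$ and $u_2$; a tree with exactly two leaves is a path, and this path joins $u_1$ to $u_2$. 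Conversely, if the black-black edges form a nonempty path with endpoints two black vertices, then those two endpoints have $k=1$ (odd degree) and all other vertices of the mobile have $k$ equal to $0$ or $2$ (even degree), so the mobile is quasi-bipartite with the path endpoints being precisely the two odd-degree black vertices.

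\textbf{Main obstacle.} The computational core is entirely routine; the only thing requiring a little care is the graph-theoretic fact that a nonempty forest always has a vertex of degree $1$ and that a tree with exactly two leaves is a path. These are standard, so I expect no real obstacle — the proof is essentially the parity identity $d = 2w + k$ combined with elementary facts about forests. One should just be careful that ``path'' is understood as a simple path (possibly of length $0$ is excluded since we require it nonempty in the quasi-bipartite case), and that the two endpoints of the path are distinct because the mobile is a tree and cannot contain a cycle.
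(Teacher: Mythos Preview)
Your proof is correct and follows essentially the same approach as the paper: both arguments rest on the parity identity (degree of a black vertex $\equiv$ number of incident black-black edges $\bmod\ 2$) and then analyze the forest of black-black edges, using that a nonempty forest has a leaf and that a tree with exactly two odd-degree vertices is a path. Your write-up is more detailed than the paper's (in particular you spell out both directions of each equivalence, whereas the paper leaves the easy converses implicit), but the underlying idea is identical.
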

\begin{proof}
Let $T$ be a mobile and $F$ the forest formed by the black vertices and black-black edges of $T$. 
Note that for each black vertex of  $T$, the degree and the number of incident black-black edges have same parity. 
Hence if $T$ is bipartite, $F$ has only vertices of even degree, so $F$ is empty; while if $T$ is quasi-bipartite, $F$ has
 two vertices of odd degree, so the only possibility is that the edges of $F$ form a non-empty path. 
\end{proof}

A bipartite mobile is called \emph{rooted} if it has a marked corner at a white vertex. 
Let $R:=R(t;x_1,x_2,\ldots)$ be the generating function of rooted bipartite mobiles, where
$t$ marks the number of white vertices and $x_i$ marks the number of black vertices
of degree $2i$ for $i\geq 1$.  As shown in~\cite{BDG04}, a decomposition at the root ensures that $R$ is given by 
Equation~\eqref{eq:seriesR}; indeed if we denote by $S$ the generating function of  bipartite mobiles rooted at
a white leaf, then $R=t+RS$ and $S=\sum_{i\geq 1}x_i\binom{2i-1}{i}R^{i-1}$. \\

For a mobile $\gamma$ with marked black vertices $b_1,\ldots,b_r$ of degrees $2a_1,\ldots,2a_r$, the associated \emph{pruned mobile} $\widehat\gamma$ 
  obtained from $\gamma$ by deleting the buds at the marked vertices (thus the marked vertices get degrees $a_1,\ldots,a_r$). Conversely,
such a pruned mobile yields  $\prod_{i=1}^r {2a_i-1 \choose a_i}$ mobiles (because of the number of ways to place the buds around the marked black vertices). 
Hence, if we denote by $\cB_{2a_1,\dots,2a_r}$ the family of bipartite mobiles with $r$ marked black vertices of respective degree $2a_1,\dots,2a_r$, 
and denote by $\chB_{2a_1,\dots,2a_r}$ the family of pruned bipartite mobiles with $r$ marked black vertices of respective degree $a_1,\dots,a_r$, 
we have:
$$\cB_{2a_1,\dots,2a_r} \simeq \prod_{i=1}^{r} {2a_i-1 \choose a_i}\chB_{2a_1,\dots,2a_r}.$$

\section{Bipartite case}\label{sec:bip}

In this section, we consider the two following families:
\begin{itemize}
	\item $\chM_{2a_1,\dots,2a_r}$ is the family of pruned bipartite mobiles with $r$ marked black vertices of respective degrees $a_1,\dots,a_r$, the mobile being rooted at a corner of one of the marked vertices,
	\item $\mathcal{F}_{s}$ is the family of forests made of $s:=\sum_{i=1}^{r} a_i$ rooted bipartite mobiles, and where additionnally $r-1$ white vertices $w_1,\ldots,w_{r-1}$ are marked. 
\end{itemize}

\begin{prop}
\label{bijection2}
There is an $(r-1)!$-to-$(r-1)!$ correspondence between the family $\chM_{2a_1,\dots,2a_r}$ and the family $\cF_{s}$.
If $\gamma\in\chM_{2a_1,\dots,2a_r}$ corresponds to $\gamma'\in\cF_s$, then each white vertex in $\gamma$ corresponds to a white vertex in $\gamma'$,
and each unmarked black vertex of degree $2i$ in $\gamma$ corresponds to a black vertex of degree $2i$ in $\gamma'$. 
\end{prop}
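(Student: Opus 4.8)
The plan is to realize the $(r-1)!$-to-$(r-1)!$ correspondence between $\chM_{2a_1,\dots,2a_r}$ and $\cF_s$ as a composition of two operations that are inverses of each other up to the $(r-1)!$ overcounting on each side, in the spirit of Pitman's aggregation argument. First I would describe the \emph{fragmentation} map $\chM_{2a_1,\dots,2a_r}\to\cF_s$: starting from a pruned rooted mobile $\gamma$ with marked black vertices $v_1,\dots,v_r$ of degrees $a_1,\dots,a_r$ (say $v_1$ carrying the root corner), fix an arbitrary linear order, i.e.\ a permutation, on the $r-1$ non-root marked vertices $v_2,\dots,v_r$; this is where the factor $(r-1)!$ will enter. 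Using this order, successively ``explode'' each marked black vertex $v_j$: a black vertex of degree $a_j$ has $a_j$ incident half-edges, and cutting it into $a_j$ pieces (one per half-edge) replaces $v_j$ by $a_j$ rooted sub-mobiles. Since each marked vertex is black and there are no marked-marked adjacencies to worry about after pruning (or these can be handled by the linear order), exploding all $r$ marked vertices turns the single tree into a forest with $\sum a_i = s$ components, each naturally rooted at the half-edge where the cut occurred; the $r-1$ vertices $w_1,\dots,w_{r-1}$ marked in the forest record \emph{where} the successive explosions were attached, i.e.\ the white neighbours through which the components must be glued back. I need to check carefully that exploding a black vertex of degree $a_j$ produces exactly $a_j$ rooted components (using that all its neighbours are white, so each incident edge leads to a distinct white-rooted piece) and that the total count adds up: exploding $v_1$ first gives $a_1$ pieces, then each subsequent explosion of a $v_j$ inside one piece increases the component count by $a_j-1$ but also marks one white vertex, so after all $r$ explosions we have $a_1 + \sum_{j\ge2}(a_j-1) + (\text{correction from the root})= s$ components and $r-1$ marked white vertices, matching $\cF_s$.

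Next I would describe the inverse \emph{aggregation} map $\cF_s\to\chM_{2a_1,\dots,2a_r}$. Given a forest of $s$ rooted bipartite mobiles with $r-1$ marked white vertices $w_1,\dots,w_{r-1}$, together with a linear order on the marked white vertices (again contributing $(r-1)!$), I reverse the process: group the $s$ roots into $r$ blocks of sizes $a_1,\dots,a_r$ — the key point is that the marked white vertices and the root-grouping data are exactly what is needed to reconstruct which half-edges around each exploded black vertex came from which component, and in which cyclic position — then merge each block of $a_i$ roots into a single black vertex of degree $a_i$, attaching the $i$-th block's black vertex at the appropriate marked white vertex $w_{i-1}$ (for $i\ge 2$) or at the root corner (for $i=1$). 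One must verify that this produces a genuine pruned mobile: no white-white edges are created (the new edges all go black-to-white by construction), the cyclic order around each new black vertex is well-defined from the ordered list of roots in its block, and the result is a tree (not merely a forest) because the $w_j$'s link the blocks into a connected structure. The bijectivity then amounts to checking that fragmentation-followed-by-aggregation, with matching choices of linear orders, is the identity, and symmetrically; this gives the $(r-1)!$-to-$(r-1)!$ statement: each element of $\chM$ with a choice of order on its $r-1$ non-root marked vertices maps to a unique (element of $\cF_s$, order on its $r-1$ marked white vertices) pair, and vice versa.

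The parameter-preservation claims are then essentially immediate: neither operation destroys or creates white vertices (explosion of a black vertex leaves all white vertices intact; merging roots into a black vertex likewise), and unmarked black vertices are never touched by either operation, so their degree distribution — hence the statistics marked by $t$ and the $x_i$'s — is preserved, as claimed in the proposition.

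The main obstacle I anticipate is making the aggregation/fragmentation genuinely \emph{canonical} rather than relying on unspecified choices: specifically, controlling the cyclic (planar) positions of the half-edges around an exploded black vertex and showing that the data carried by the forest (the rooting of each component plus the $r-1$ marked white vertices plus the linear order) pins these down uniquely. A related subtlety is the interaction of the explosions when a marked black vertex lies ``between'' two others in the tree, and the bookkeeping of the root corner of the mobile versus the roots of the forest components — ensuring the offsets in the component count work out to exactly $s$ and that exactly $r-1$ (not $r$) white vertices get marked. Once the planar embedding data is tracked correctly (presumably by reading half-edges in the order induced by a contour/traversal of the mobile), the rest is routine verification.
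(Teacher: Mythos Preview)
Your proposal has the right Pitman-style spirit, but the specific fragmentation and aggregation maps you describe do not actually work. On the fragmentation side: deleting (``exploding'') a marked black vertex $v_j$ of degree $a_j$ increases the component count by $a_j-1$, so deleting all $r$ marked vertices leaves $1+\sum_i(a_i-1)=s-r+1$ components, not $s$; your unspecified ``correction from the root'' does not close this gap, and such a deletion singles out no white vertex, so you have no natural candidates for $w_1,\dots,w_{r-1}$. The paper's operation is genuinely different: one does not delete $v_j$ but \emph{separates} at it, by splitting into two copies the white vertex lying on the path from $v_j$ to the root of its current component. One copy becomes the rightmost white neighbour of $v_j$ in the newly detached subtree (now rooted at $v_j$), the other stays behind and \emph{is} the marked white vertex produced at that step. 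Each of the $r-1$ separations thus creates exactly one new component and one new (marked) white vertex; only afterwards are all $r$ marked black vertices removed, yielding exactly $s$ rooted mobiles.

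Your aggregation has a matching problem. Attaching block~$i$ at $w_{\sigma(i-1)}$ for a permutation $\sigma$ can fail outright: with $r=3$, $w_1$ lying in block~$2$ and $w_2$ in block~$3$, either choice of $\sigma$ forces some block to be attached at a $w$ already in its own component, creating a cycle. In the paper the $(r-1)!$ on the forest side does \emph{not} come from permuting the $w_i$'s: one processes $w_{r-1},w_{r-2},\dots,w_1$ in their given order and, at the $k$-th step, freely chooses one of the $r-k$ components not containing the current $w$, merging the rightmost white neighbour of that component's root black vertex with the current $w$. This is precisely what guarantees both that a tree is produced and that the count is $(r-1)(r-2)\cdots 1$ on each side.
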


\begin{figure}
	\centering
	\includegraphics[width=\linewidth]{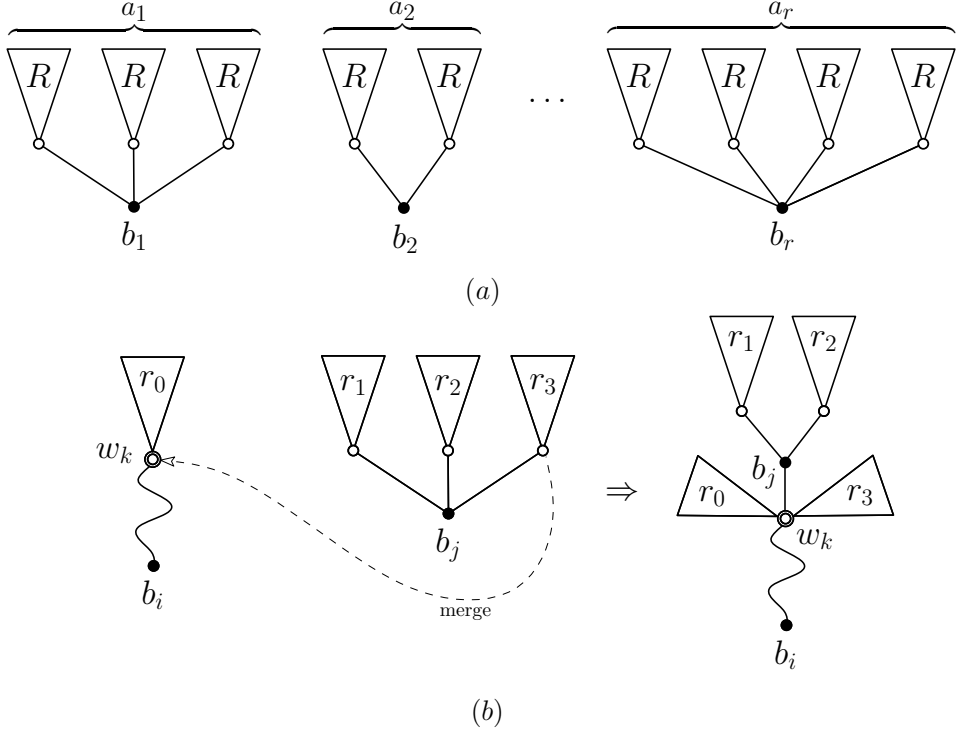}
	\caption{$(a)$ From a forest with $s=\sum_{i=1}^r{a_i}$ mobiles to $r$ components rooted at black vertices $b_1,\dots,b_r$. $(b)$ Merging the component rooted at $b_j$ with the distinct component rooted at $b_i$ containing the marked white vertex $w_k$.}
	\label{bij2}
\end{figure}

\begin{proof}
We will describe the correspondence in both ways (see Figure~\ref{bij2}).
First, one can go from the forest to the pruned mobile through the following operations:
\begin{enumerate}
	\item Group the first $a_1$ mobiles and bind them to a new black vertex $b_1$, then bind the next $a_2$ mobiles to 
a new black vertex $b_2$, and so on, to  get a forest with $r$ connected components rooted at $b_1,\dots,b_r$, see Figure~\ref{bij2}(a).
	\item The $r-1$ marked white vertices $w_1,\dots, w_{r-1}$ are ordered, pick one of the  $r-1$ components which do not contain $w_{r-1}$. Bind this component to $w_{r-1}$ by merging $w_{r-1}$ with the rightmost white neighbour of $b_i$, see Figure~\ref{bij2}(b). Repeat the operation for each $w_{r-i}$ to reduce the number of components to one ($r-i$ possibilities in the choice of the connected component at the $i$th step), thus getting a decorated bipartite tree rooted at a corner incident to some $b_j$,  and having $r$ black vertices  $b_1,\dots,b_r$ without buds.
\end{enumerate}

Conversely, one can go from the pruned mobile to the forest through the following operations:
\begin{enumerate}
	\item Pick one marked black vertex $b_k$, but the root, and separate it as in Figure \ref{bij2}(b) read from right to left. 
This creates a new connected component, rooted at $b_k$.  
	\item Repeat this operation, choosing at each step ($r-i$ possibilites 
	at the $i$th step)
	a marked black vertex that is not the root in its connected component, until
	one gets $r$ connected components, each being rooted at one of the marked
	black vertices $\{b_1,\ldots,b_r\}$ of respective degrees $a_1,\ldots,a_r$.  
	\item Remove all marked black vertices $b_1,\ldots,b_r$ and their incident edges; this yields
	a forest of $s$ rooted bipartite mobiles.
\end{enumerate}

In both ways, there are $\prod_{i=1}^{r-1}(r-i)=(r-1)!$ possibilities, that is,  the correspondence is $(r-1)!$-to-$(r-1)!$. 
\end{proof}

As a corollary we obtain the formula of Theorem~\ref{theo:main} in the bipartite case:
\begin{coro}\label{coro1}
For $r\geq 1$ and $a_1,\ldots,a_r$ positive integers, the generating function 
$G_{2a_1,\ldots,2a_r}$ satisfies~\eqref{eq:F}, i.e.,

\begin{equation}
\label{bip1}
G_{2a_1,\dots ,2a_r} = \left(\prod_{i=1}^r \frac{(2a_i)!}{a_i!(a_i-1)!}\right)
\cdot\frac1{s}\cdot\frac{\mathrm{d}^{r-\!2}}{\mathrm{d}t^{r-\!2}}R^s,\ \ \mathrm{where}\ s=\sum_{i=1}^r a_i.
\end{equation}
\end{coro}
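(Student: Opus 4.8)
The plan is to combine the two correspondences already established in the excerpt and then translate the resulting enumeration of forests into a differential operator acting on $R^s$. First I would recall that, by Theorem~\ref{theo:bdg} and the discussion of pruned mobiles, an even map of type $(2a_1,\ldots,2a_r)$ rooted at a corner of its first boundary-face is (after vertex-pointing) in natural bijection with the family $\cB_{2a_1,\ldots,2a_r}$ of bipartite mobiles with $r$ marked black vertices of degrees $2a_1,\ldots,2a_r$, rooted at a corner of the first marked vertex; a subtlety here is the vertex-pointing, which I would handle exactly as in the standard $M'(t)=2R(t)$ argument --- pointing a vertex corresponds to $\partial_t$, and conversely the generating function of vertex-pointed even maps with a marked boundary corner is $\partial_t$ applied to the one with an extra marked vertex, so up to bookkeeping with $t$ one identifies $G_{2a_1,\ldots,2a_r}$ with the series of pruned rooted mobiles $\chM_{2a_1,\ldots,2a_r}$ after restoring the $\prod_i\binom{2a_i-1}{a_i}$ factor for the buds. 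Using $\binom{2a_i-1}{a_i}=\tfrac{(2a_i)!}{2\,a_i!\,a_i!}$ will produce the stated constant $\prod_i\alpha(2a_i)=\prod_i\tfrac{(2a_i)!}{a_i!(a_i-1)!}$ (the extra factor $a_i$ per boundary, versus $\tfrac{(2a_i)!}{2(a_i!)^2}$, should come out of the marking/derivative bookkeeping, and getting this factor exactly right is one thing to be careful about).

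Next I would invoke Proposition~\ref{bijection2}: the family $\chM_{2a_1,\ldots,2a_r}$ of pruned rooted bipartite mobiles with marked black vertices of pruned degrees $a_1,\ldots,a_r$ is in $(r-1)!$-to-$(r-1)!$ correspondence with $\cF_s$, the family of forests of $s=\sum a_i$ rooted bipartite mobiles carrying $r-1$ additional marked white vertices $w_1,\ldots,w_{r-1}$, with the face-degree distribution (equivalently the $x_i$-weights and the $t$-weight at unmarked white vertices) preserved. So the generating function of $\chM_{2a_1,\ldots,2a_r}$ equals that of $\cF_s$; the $(r-1)!$ on both sides cancels.

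Finally I would compute the generating function of $\cF_s$ directly. A forest of $s$ rooted bipartite mobiles has generating function $R^s$, where $R$ is given by~\eqref{eq:seriesR}; marking $r-1$ white vertices and then forgetting their order amounts, at the level of generating functions, to applying $\tfrac{1}{(r-1)!}$ times a $(r-1)$-fold "mark a white vertex" operator. The key observation --- and the only real computation --- is that the operator "mark one white vertex" acts on the series of any family of bipartite mobiles as $\partial_t$, because $t$ counts white vertices; hence marking $r-1$ ordered white vertices gives $\tfrac{\mathrm d^{r-1}}{\mathrm dt^{r-1}}R^s$, and dividing by $(r-1)!$ for the ordering, then reconciling with the derivative $\tfrac{\mathrm d^{r-2}}{\mathrm dt^{r-2}}$ and the $1/s$ in~\eqref{bip1} via the already-used vertex-pointing identification ($G$ is "one derivative below" the pointed series, and $\partial_t R^s = s R^{s-1}R' $ supplies the $1/s$), yields~\eqref{bip1}. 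The main obstacle I expect is not any single hard step but the careful reconciliation of the three "$\partial_t$ versus marked vertex versus $1/s$ normalization" bookkeeping steps --- the vertex-pointing in the map/mobile bijection, the white-vertex markings in $\cF_s$, and the order of the final derivative --- so that the constants $\alpha(2a_i)$, the factor $1/s$, and the operator $\mathrm d^{r-2}/\mathrm dt^{r-2}$ all land exactly as in~\eqref{eq:F}; I would do this reconciliation first on the small cases $r=1,2,3$ (where the formula reduces to $G_{2a}'=\binom{2a}{a}R^a$, to $\alpha(\ell_1)\alpha(\ell_2)R^s/s$, and to $\alpha(\ell_1)\alpha(\ell_2)\alpha(\ell_3)R'R^{s-1}$ respectively) to fix the normalization before stating it in general.
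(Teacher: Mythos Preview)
Your overall strategy matches the paper's exactly: Theorem~\ref{theo:bdg}, pruning, Proposition~\ref{bijection2}, and the identification of the generating function of $\cF_s$ with a derivative of $R^s$, followed by an integration in $t$. But three of your bookkeeping guesses are wrong and would not reconcile even on the small cases. First, the marked white vertices $w_1,\ldots,w_{r-1}$ in $\cF_s$ are \emph{ordered} by definition, so the generating function of $\cF_s$ (hence of $\chM_{2a_1,\ldots,2a_r}$) is exactly $\tfrac{\mathrm d^{r-1}}{\mathrm dt^{r-1}}R^s$, with no division by $(r-1)!$. Second, the factor $1/s$ does not come from $\partial_t R^s$; it comes from the rooting: an object of $\chM_{2a_1,\ldots,2a_r}$ carries a root at a corner of \emph{any} of the marked (pruned) black vertices, not just the first, and there are $s=\sum_i a_i$ such corners in total, so $s\cdot B_{2a_1,\ldots,2a_r}=\bigl(\prod_i\binom{2a_i-1}{a_i}\bigr)\widehat M_{2a_1,\ldots,2a_r}$, where $B$ is the series of the unrooted family $\cB_{2a_1,\ldots,2a_r}$. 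Third, the missing factors $2a_i$ that promote each $\binom{2a_i-1}{a_i}$ to $\alpha(2a_i)=2a_i\binom{2a_i-1}{a_i}$ come from the definition of $G_{2a_1,\ldots,2a_r}$ itself: \emph{every} boundary face carries a marked corner (not just the first), whereas the marked black vertices of a mobile in $\cB_{2a_1,\ldots,2a_r}$ carry none, so Theorem~\ref{theo:bdg} gives $G_{2a_1,\ldots,2a_r}'=\bigl(\prod_i 2a_i\bigr)B_{2a_1,\ldots,2a_r}$, the derivative accounting for the vertex-pointing. Chaining these three relations yields $G_{2a_1,\ldots,2a_r}'=\tfrac{1}{s}\bigl(\prod_i 2a_i\binom{2a_i-1}{a_i}\bigr)\tfrac{\mathrm d^{r-1}}{\mathrm dt^{r-1}}R^s$ directly, and integrating in $t$ gives~\eqref{bip1}; no trial-and-error on small cases is needed.
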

\begin{proof}
As mentioned in the introduction, for $r=1$ the expression reads
 $G_{2a}\ \!\!'=\binom{2a}{a}R^a$, which is a direct consequence
of the bijection with mobiles (indeed $G_{2a}\ \!\!'$ is the series of mobiles
with a marked black vertex $v$ of degree $2a$, with a marked corner
incident to $v$). So we now assume $r\geq 2$. 
Let $B_{2a_1,\dots,2a_r}=B_{2a_1,\dots,2a_r}(t;x_1,x_2,\ldots)$ be the generating
function of $\cB_{2a_1,\dots,2a_r}$,
where $t$ marks the number of white vertices and $x_i$ marks the number of black
vertices of degree $2i$. Let $\widehat{M}_{2a_1,\dots,2a_r}=\widehat{M}_{2a_1,\dots,2a_r}(t;x_1,x_2,\ldots)$ 
be the generating function of $\chM_{2a_1,\dots,2a_r}$, where again $t$ marks the number of white 
vertices and $x_i$ marks the number of black vertices of degree $2i$. By definition
of $\chM_{2a_1,\dots,2a_r}$, we have:
$$
s\cdot B_{2a_1,\dots,2a_r}=\left(\prod_{i=1}^r \binom{2a_i-1}{a_i}\right)\cdot\widehat{M}_{2a_1,\dots,2a_r}
$$
where the factor $s$ is due to the number of ways to place the root (i.e., mark a corner at one of the marked black vertices), 
and the binomial product is due to the number of ways to place the buds around the marked black vertices. 
Moreover, Theorem~\ref{theo:bdg} ensures that: 
$$
G_{2a_1,\dots,2a_r}\ \!\!\!'=\left(\prod_{i=1}^r 2a_i\right)\cdot B_{2a_1,\dots,2a_r}
$$
where the multiplicative constant  is the consequence of a corner 
being marked in every boundary face, and where the derivative (according to $t$) is 
the consequence of a vertex being marked in the bipartite map. 
Next, Proposition \ref{bijection2} yields:
$$
\label{bip3}
\widehat{M}_{2a_1,\dots,2a_r}=\frac{\mathrm{d}^{r-\!1}}{\mathrm{d}t^{r-\!1}}R^s
$$
hence we conclude that:
$$
G_{2a_1,\dots,2a_r}\ \!\!\!'=\frac1{s}\left(\prod_{i=1}^r 2a_i \binom{2a_i-1}{a_i}\right)\cdot\frac{\mathrm{d}^{r-\!1}}{\mathrm{d}t^{r-\!1}}R^s,
$$
which, upon integration according to $t$, gives the claimed formula.
\end{proof}

\section{Quasi-bipartite case}\label{sec:quasi}
So far we have obtained an expression for 
the generating function $G_{\ell_1,\ldots,\ell_r}$ when all $\ell_i$ are even.
In general, by definition of even maps of type $(\ell_1,\ldots,\ell_r)$, 
there is an even number of $\ell_i$ of odd degree. We deal here with the case where exactly two of the $\ell_i$ are odd. 
This is done by a reduction to the bipartite case, 
using so-called \emph{blossoming trees} (already considered in~\cite{Sc97}) as auxililary structures, see Figure~\ref{fig:blossoming_tree}(a) 
 for an example.  
 
\begin{figure}
\begin{center}
\includegraphics[width=12cm]{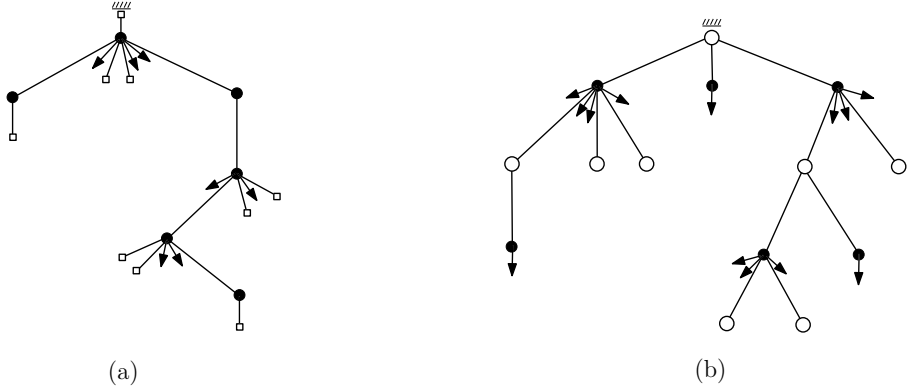}
\end{center}
\caption{(a)~A blossoming tree. 
(b) The corresponding rooted bipartite mobile.}
\label{fig:blossoming_tree}
\end{figure}

\begin{defin}[Blossoming trees]
A \emph{planted plane tree} is a plane tree with a marked leaf;
classically it is drawn in a top-down way; each vertex $v$ (different from the root-leaf) has $i$ (ordered) children, and the integer 
$i$ is called  the \emph{arity} of $v$. Vertices that are not leaves are colored
black (so a \emph{black vertex} means a vertex that is not a leaf).  
A \emph{blossoming tree} is a rooted plane tree where each black vertex $v$,
of arity $i\geq 1$,  
carries additionally $i-1$ dangling half-edges called \emph{buds} (leaves
 carry no bud). The \emph{degree} of such a black vertex $v$
is considered to be $2i$. 
\end{defin}

By a decomposition at the root, the generating function $T:=T(t;x_1,x_2,\dots)$ of blossoming trees, where $t$ marks the number of non-root leaves and $x_{i}$ marks 
the number of black vertices of degree $2i$, is given by:

\begin{equation}\label{eq:T}
T=t+\sum_{i \geq 1} x_{i} \binom{2i-1}{i} T^i.
\end{equation}

\begin{claim}\label{claim:TR}
There is a bijection between the family $\cT$ of blossoming trees and the family $\cR$ of rooted bipartite mobiles. 
For $\gamma\in\cT$ and $\gamma'\in\cR$ the associated rooted bipartite 
mobile, each non-root leaf of $\gamma$ corresponds to a white vertex of $\gamma'$,  and each black vertex of degree $2i$ in $\gamma$ corresponds to a black vertex of degree $2i$ in $\gamma'$. 
\end{claim}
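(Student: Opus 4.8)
The plan is to exhibit the bijection between $\cT$ and $\cR$ explicitly, comparing the two decomposition-based functional equations~\eqref{eq:T} and~\eqref{eq:seriesR}, which are literally identical; the challenge is to make this coincidence into a natural, recursive bijection that tracks the claimed statistics. First I would set up the recursive correspondence on the series level: both $T$ and $R$ satisfy $F = t + \sum_{i\geq 1} x_i \binom{2i-1}{i} F^i$, where in each case the term $t$ corresponds to a (root-)leaf and the term $x_i\binom{2i-1}{i}F^i$ corresponds to a black vertex of degree $2i$ whose $i$ ``descending'' slots carry recursively-built objects, with the binomial coefficient $\binom{2i-1}{i}$ accounting for a choice of positions. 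So the bijection will be defined by induction on the size (say, number of non-root leaves).

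Concretely, I would describe the map $\cT \to \cR$ as follows. Take a blossoming tree $\gamma$. If $\gamma$ is the single root-leaf, send it to the trivial rooted mobile consisting of a single (marked) white vertex. Otherwise the root-leaf of $\gamma$ is attached to a black vertex $v$ of some arity $i\geq 1$, hence of degree $2i$, carrying $i-1$ buds interleaved among its $i$ children in some cyclic-linear order; the $i$ children are roots of smaller blossoming trees $\gamma_1,\dots,\gamma_i$. In the mobile world I want $v$ to become a black vertex of degree $2i$ all of whose $i$ buds have been ``resolved'': in a rooted bipartite mobile a black vertex of degree $2i$ has $i$ white neighbours and $i$ buds. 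I would make the root-leaf of $\gamma$ become the marked corner at a white vertex, make $v$ the black vertex adjacent to it, recursively convert each $\gamma_j$ into a rooted bipartite mobile and glue its root white vertex as the $j$th white neighbour of $v$, and place the $i$ buds of $v$ in the mobile exactly where the $i-1$ buds of $v$ in $\gamma$ sat, plus one extra bud in the position occupied by the root-leaf-edge — i.e., the cyclic sequence of $2i$ half-edges around $v$ is read off directly. This is manifestly reversible: given a rooted bipartite mobile, follow the marked corner to its incident black vertex (the white root has exactly one black neighbour by the no-white-white-edge condition applied at a leaf; if the marked white vertex is isolated we are in the base case), read off the cyclic arrangement of its $i$ white neighbours and $i$ buds, single out the bud ``just after'' the marked corner to become the root-leaf edge, recurse on the subtrees hanging at the white neighbours, and turn the remaining $i-1$ buds into the buds of the black vertex in $\cT$. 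One checks the no-white-white-edge and balanced-bud conditions on the mobile side are automatically respected, and that white vertices $\leftrightarrow$ non-root leaves and black vertices of degree $2i$ $\leftrightarrow$ black vertices of degree $2i$, giving the stated statistic-preservation.

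The step I expect to be the main obstacle is the bookkeeping around the buds of the black vertex adjacent to the root: in $\cT$ a black vertex of arity $i$ carries $i-1$ buds, whereas the corresponding black vertex in $\cR$ has $i$ buds, so one bud must be ``created'' — and the natural place for it is the slot vacated by the edge to the root-leaf/marked corner. I must check that this is consistent at \emph{every} black vertex, not just the one adjacent to the root: in $\cT$, each non-root black vertex $v$ of arity $i$ is reached from its parent by an edge and carries $i-1$ buds among its $i$ children, for a total of $2i$ half-edges around $v$ once the parent-edge is counted, matching a degree-$2i$ black vertex in $\cR$ whose $i$-th white neighbour is the parent direction. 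So uniformly, the parent-edge of a black vertex in $\cT$ plays the role of the ``missing'' bud; this is exactly the discrepancy between $\binom{2i-1}{i}$ (choosing $i$ bud-or-parent positions out of $2i-1$ remaining, equivalently $i-1$ buds once the parent slot is fixed relative to the children) on both sides, which is why the two functional equations agree. Once this local matching is verified, induction on size closes the argument, and the preservation of $t$ and the $x_i$ is immediate from the construction. I would present the bijection pictorially (referring to Figure~\ref{fig:blossoming_tree}(a)--(b)) and then give the short inductive verification.
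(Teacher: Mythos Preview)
Your high-level approach is precisely the paper's: both $T$ and $R$ satisfy the identical functional equation $F=t+\sum_i x_i\binom{2i-1}{i}F^i$, so a recursive bijection preserving the relevant statistics exists. The paper's own proof says no more than this (it literally stops at ``one can easily produce recursively a bijection'' and points to the figure), so at the level of strategy you are on target.

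However, in trying to make the bijection explicit you introduce a genuine error. In the inverse direction you write that ``the white root has exactly one black neighbour by the no-white-white-edge condition applied at a leaf''. This is false: a rooted bipartite mobile is rooted at a \emph{corner} of a white vertex, and that white vertex may have arbitrarily many black neighbours (the no-white-white condition only forbids white--white edges, it says nothing about the degree of a white vertex). As stated, your inverse map is therefore not defined on all of $\cR$. Relatedly, the forward description is internally inconsistent: you simultaneously say the root-leaf becomes a white vertex adjacent to $v$, and that the root-leaf-edge slot becomes a bud of $v$; these cannot both hold.

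The fix is to read the decomposition $R=t+\sum_i x_i\binom{2i-1}{i}R^i$ correctly on the mobile side. From the marked corner at $w$, look at the \emph{first} incident edge (say clockwise); it leads to a black vertex $b$ of degree $2i$. Around $b$ one sees $i$ buds and $i-1$ edges to white vertices other than $w$; together with the ``remainder'' of $w$ (i.e., $w$ with its other black neighbours and the next corner as new root), this gives exactly $i$ sub-objects in $\cR$ and a $\binom{2i-1}{i}$ arrangement, matching the $i$ children and $i-1$ buds of the black vertex in $\cT$. In other words, one of the $i$ recursive pieces on the mobile side lives at the root white vertex itself, not at a fresh leaf. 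Once you phrase it this way, your inductive matching of buds and children goes through and the statistic-preservation follows.
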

\begin{proof}
Note that the decomposition-equation~\eqref{eq:T} satisfied by $T$ is exactly the same
as the decomposition-equation~\eqref{eq:seriesR} satisfied by $R$. Hence $T=R$, and 
one can easily produce recursively a bijection between $\cT$ and $\cR$ that
 sends black vertices of degree $2i$ to black vertices of degree $2i$, and sends leaves to white vertices, for instance 
Figure~\ref{fig:blossoming_tree} shows a blossoming tree and the corresponding rooted bipartite mobile.  
\end{proof}

The bijection between $\cT$ and $\cR$ will be used in order to get rid of the black path 
(between the two black vertices of odd degrees) 
which appears in a quasi-bipartite mobile. Note that, if we denote by $\cR'$ the family
of rooted mobiles with a marked white vertex (which does
not contribute to the number of white vertices), and by $\cT'$ the family 
of blossoming trees with a marked non-root leaf (which does
not contribute to the number of non-root leaves), then $\cT'\simeq\cR'$. 

Let $\tau$ be a  mobile with two marked black vertices $v_1,v_2$. 
Let $P=(e_1,\dots,e_k)$ be the path between $v_1$ and $v_2$ in $\tau$. If we untie $e_1$ from $v_1$ and $e_k$ from $v_2$, we obtain 3 connected components: the one containing $P$ is called the \emph{middle-part} $\tau'$ of $\tau$; 
the edges $e_1$ and $e_k$ are called respectively the \emph{first end} and 
the \emph{second end} of $\tau'$ in $\tau$. The vertices $v_1$ and $v_2$ are called \emph{extremal}.\\

Let $\cH$  be the family of structures that can be obtained as middle-parts of quasi-bipartite mobiles where $v_1$ and $v_2$ are the two black vertices of odd degree
(hence the path between $v_1$ and $v_2$ contains only black vertices).
And let $\cK$ be the family of structures  that can be obtained as middle-parts of  bipartite mobiles with two marked black vertices $v_1,v_2$.

\begin{lem}\label{lem:middle}
We have the following bijections:
$$ \cH \simeq \cT' \simeq \cR'  \hspace{3cm} \cK \simeq \cR' \times \cR $$
Hence: \begin{center} $\cK \simeq \cH \times \cR$. \end{center}

In these bijections each non-extremal black vertex of degree $2i$ 
in an object on the left-hand side corresponds to a non-extremal black vertex of degree $2i$ 
 in the corresponding object on the right-hand side.
\end{lem}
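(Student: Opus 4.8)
\textbf{Proof plan for Lemma~\ref{lem:middle}.}
The plan is to establish the three claimed bijections separately, and then to observe that the last one is a formal consequence of the first two. The right-hand identifications $\cT'\simeq\cR'$ and (from Claim~\ref{claim:TR}) $\cT\simeq\cR$ are already available, so the real content is the two identifications $\cH\simeq\cT'$ and $\cK\simeq\cR'\times\cR$; once these hold, composing gives $\cK\simeq\cR'\times\cR\simeq\cT'\times\cR\simeq\cH\times\cR$, which is the final assertion. In each case I will keep track of the fact that a non-extremal black vertex of degree $2i$ is sent to a non-extremal black vertex of degree $2i$; the point is that all the surgery happens along the black path and at the two extremal vertices, so every black vertex \emph{strictly inside} a side-branch (and hence all the white vertices and their decorations) is untouched.

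First I would treat $\cH$. An object of $\cH$ is the middle-part of a quasi-bipartite mobile in which $v_1,v_2$ are the two odd-degree black vertices; by the Claim at the end of Section~\ref{sec:first}, the path joining $v_1$ and $v_2$ is exactly the set of black-black edges, so the middle-part is a black path $v_1=u_0,u_1,\dots,u_k=v_2$, each $u_j$ carrying some side-branches (rooted bipartite mobiles attached by white-black edges) and some buds, with the parity/bud constraints of a mobile holding at each $u_j$. The idea is to ``read off'' this caterpillar-like structure as a single blossoming tree: contract the black path to encode it as the spine of a planted plane tree, turning the side-branches hanging on the left and on the right of the spine into the ordered children, with buds of the mobile becoming buds of the blossoming tree. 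Concretely I would show that the data (path length, the sequence of side-branches and buds around each $u_j$, respecting the mobile degree condition) is in natural bijection with (a planted plane tree whose black vertices carry the right number of buds) together with a marked non-root leaf that records where the ``first end'' $e_1$ was attached — which is precisely an element of $\cT'$. Unwinding the bud-counting: a black vertex of the path of mobile-degree $2m$ has $m$ white neighbours among its side-branches and endpoints, which translates into arity $m$ (hence blossoming-degree $2m$) in the tree, so degrees are preserved on non-extremal black vertices. I would present one figure-free paragraph making the spine$\leftrightarrow$path correspondence explicit and check it is a bijection by exhibiting the inverse. Composing with Claim~\ref{claim:TR} (in its marked-leaf version $\cT'\simeq\cR'$, already noted in the text) gives $\cH\simeq\cR'$.

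For $\cK$, an object is the middle-part of a \emph{bipartite} mobile with two marked black vertices $v_1,v_2$; since a bipartite mobile has no black-black edge, the path between $v_1$ and $v_2$ alternates black and white vertices, say $v_1=u_0,w_1,u_1,w_2,\dots,w_k,u_k=v_2$ with the $u_j$ black and the $w_j$ white, each vertex again carrying side-branches (rooted bipartite mobiles) and, at black vertices, buds. Here I would split the object at the white path-vertices: cutting the path at its unique ``middle'' in the appropriate sense produces, on one side, a structure that is again a middle-part-type object carrying a distinguished white vertex — an element of $\cR'$ — and on the other side a plain rooted bipartite mobile — an element of $\cR$; conversely, gluing an $\cR'$ and an $\cR$ along the marked white vertex reconstructs the middle-part. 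I would make this precise by describing, as for $\cH$, the spine-to-tree encoding but now keeping a marked white leaf where the cut was made, which is exactly what distinguishes $\cR'\times\cR$ from $\cR'$ alone. Degree preservation on non-extremal black vertices is immediate since, as before, no black vertex off the spine is altered.

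The main obstacle I expect is bookkeeping at the two extremal vertices and at the ends $e_1,e_k$: one must be careful that ``forgetting'' the extremal black vertices $v_1,v_2$ (whose degrees are \emph{not} claimed to be preserved — indeed in $\cH$ they are odd and disappear) is done consistently with how the first and second ends are recorded, so that the map is genuinely invertible and no spurious choices are introduced (in particular the correspondence should be a bijection, not merely an $a$-to-$b$ correspondence). A secondary subtlety is the degenerate case $k$ small (e.g.\ the path reduced to a single edge, or $v_1=v_2$ excluded by definition), where the caterpillar has an empty spine; I would check these boundary cases give the expected degenerate blossoming trees (a single leaf, etc.) so that the bijections remain valid without exception. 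Once $\cH\simeq\cT'\simeq\cR'$ and $\cK\simeq\cR'\times\cR$ are in hand, the displayed conclusion $\cK\simeq\cH\times\cR$ follows by transitivity of the $\simeq$-relation, with all listed parameters preserved.
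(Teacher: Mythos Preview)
Your overall plan is sound and matches the paper's strategy: establish $\cH\simeq\cT'$ and $\cK\simeq\cR'\times\cR$ separately, then compose. But there is a genuine gap in your construction of $\cH\simeq\cT'$.

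You propose to take the black path as the spine of a blossoming tree and ``turn the side-branches into the ordered children''. The difficulty is that the side-branches hanging from the path are rooted bipartite \emph{mobiles} (elements of $\cR$), whereas the subtrees hanging from an internal vertex in a blossoming tree must themselves be blossoming trees (elements of $\cT$). These are genuinely different objects: a mobile may have white vertices of arbitrary degree, while in a blossoming tree every white vertex is a leaf. So simply declaring the path to be the spine and leaving the side-branches as they are does \emph{not} produce an element of $\cT'$. The missing step --- and this is precisely the paper's key idea --- is to apply the bijection $\cR\simeq\cT$ of Claim~\ref{claim:TR} to each rooted mobile hanging from the path, replacing it by the corresponding blossoming tree. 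Only after this replacement does the whole caterpillar become a bona fide blossoming tree, with the two dangling ends serving as root-leaf and marked non-root leaf. Your degree-preservation argument (``every black vertex strictly inside a side-branch is untouched'') must then be amended: those vertices \emph{are} touched by the recursive conversion, but Claim~\ref{claim:TR} guarantees that their degrees are preserved. (Incidentally, your bud count is off by one: an internal path vertex of degree $2m$ has $m-1$ white neighbours and $m-1$ buds outside the two path-edges, not $m$; the arity in the resulting blossoming tree is still $m$, but for the reason $1+(m-1)=m$.)

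Your treatment of $\cK\simeq\cR'\times\cR$ is also too vague to constitute a proof. ``Cutting the path at its unique middle'' does not specify a well-defined operation, and the allusion to a ``spine-to-tree encoding with a marked white leaf'' imports machinery from the $\cH$ case that is unnecessary here. The paper's argument is direct and uses no blossoming trees: because the mobile is bipartite, the two dangling ends $e_1,e_k$ of the middle-part sit at \emph{white} vertices $w,w'$, so the structure is already a rooted bipartite mobile (rooted at the $e_1$-corner of $w$) carrying a secondary marked corner at $w'$. One then unties the single rooted sub-mobile $r$ immediately to one fixed side of that secondary corner; what remains is a rooted mobile with $w'$ now merely a marked white vertex (an element of $\cR'$), and $r\in\cR$ is the extra factor. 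This is essentially a one-line bijection once the observation ``the ends are white'' is made explicit.
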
 

\begin{figure}
 	\centering
 	\includegraphics[width=0.7\linewidth]{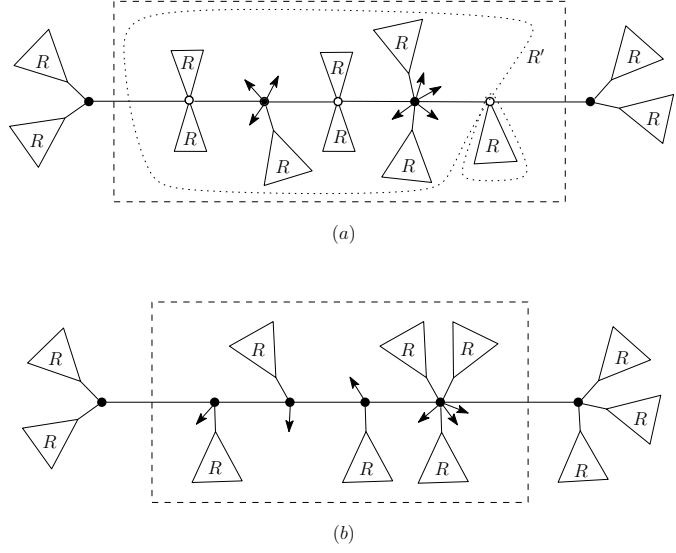}
 	\caption{middle-parts in the bipartite case $(a)$ and in the quasi-bipartite case $(b)$.}
	\label{fig:middle}
\end{figure}
	
\begin{proof}
Note that any $\tau \in \cH$  consists of a path $P$ 
of black vertices, and each vertex of degree $2i$ in $P$ carries (outside of $P$) $i-1$
buds and $i-1$ rooted mobiles (in $\cR$), as illustrated in Figure~\ref{fig:middle}(b). 
Let $\tau'$ be $\tau$ where each rooted mobile attached to $P$ 
is replaced by the corresponding blossoming tree (using the isomorphism 
of Claim~\ref{claim:TR}), and where the ends of $\gamma$ are considered
as two marked leaves (respectively the root-leaf and a marked
non-root leaf). We clearly have $\tau'\in\cT'$. Conversely, starting from $\tau'\in\cT'$,
let $P$ be the path between the root-leaf and the non-root marked leaf.
Each vertex of degree $2i$ on $P$ carries (outside of $P$) $i-1$ buds and $i-1$
blossoming trees. Replacing each blossoming tree attached to $P$
by the corresponding rooted mobile, and seeing the two marked leaves
as the first and second end of $P$, one gets a structure in $\cH$.
So we have $\cH\simeq\cT'$. 

The bijection $\cK\simeq\cR'\times\cR$ is simpler. Indeed, any $\tau\in\cK$
can be seen as a rooted mobile $\gamma$ 
 with a secondary marked corner at a white vertex (see Figure~\ref{fig:middle}(a)). 
Let $w$ (resp. $w'$) be the white vertex at the root (resp. at the secondary
marked corner) and let $P$ be the path between $w$ and $w'$. Each white vertex
on $P$ can be seen as carrying two rooted mobiles (in $\cR$), one on each
side of $P$. Let $r,r'$ be the two rooted mobiles attached at $w'$ (say, $r$
is the one on the left of $w'$ when looking toward $w$). 
If we untie $r$ from the rest of $\gamma$, then $w'$ now just acts as a marked
white vertex in $\gamma$, so the pair $(\gamma,r)$ is in $\cR'\times\cR$. 
The mapping from $(\gamma,r)\in\cR'\times\cR$ to $\tau\in\cK$ processes
in the reverse way. 
 We get $\cK\simeq\cR'\times\cR$.
\end{proof}
At the level of generating function expressions, 
Lemma~\ref{lem:middle} 
has been proved by Chapuy~\cite[Prop.7.5]{Ch09} in an even more precise form (which keeps track of a certain distance-parameter between the two extremities). 
We include our own proof to make the paper self-contained, and because the new idea of using blossoming trees as
auxiliary tools yields a short bijective proof. 

Now from Lemma~\ref{lem:middle} we can deduce a reduction from the quasi-bipartite
to the bipartite case (in Lemma~\ref{lem4} thereafter, see also Figure~\ref{fig:middle}). 
Let $a_1$ and $a_2$ be positive integers. 
Define $\cB_{2a_1,2a_2}$ as the family of bipartite mobiles with two marked black vertices $v_1, v_2$ of respective degrees $2a_1, 2a_2$. 
Similarly, define $\cQ_{2a_1-1,2a_2+1}$ as the family of quasi-bipartite mobiles with two marked black vertices $v_1, v_2$ of respective degrees $2a_1-1, 2a_2+1$ (i.e.,
the marked vertices are the two black vertices of odd degree).
Let $\chB_{2a_1,2a_2}$ be the family of pruned mobiles (recall that ``pruned'' means ``where buds at marked black vertices are taken out'') 
obtained from mobiles in $\cB_{2a_1,2a_2}$,
and let $\chQ_{2a_1-1,2a_2+1}$ be the family of pruned mobiles obtained from mobiles in $\cQ_{2a_1-1,2a_2+1}$.  

\begin{lem}\label{lem4}
For $a_1,a_2$ two positive integers:
$$\chB_{2a_1,2a_2} \simeq \chQ_{2a_1-1,2a_2+1}.$$
In addition, if $\gamma\in\chB_{2a_1,2a_2}$ corresponds to $\gamma'\in \chQ_{2a_1-1,2a_2+1}$, then each non-marked black vertex of degree $2i$ 
(resp. each white vertex)
in $\gamma$ corresponds to a non-marked black vertex of degree $2i$ 
(resp. to a white vertex) in $\gamma'$.
\end{lem}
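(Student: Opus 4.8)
The plan is to decompose mobiles in both families along the unique path joining the two marked vertices $v_1$ and $v_2$, and then to feed the resulting pieces into the bijection $\cK\simeq\cH\times\cR$ of Lemma~\ref{lem:middle}. Take first $\gamma\in\chB_{2a_1,2a_2}$ and let $P=(e_1,\dots,e_k)$ be the path from $v_1$ to $v_2$; since a bipartite mobile has no black-black edge (and no white-white edge), $P$ alternates colours, so $e_1$ is an edge from $v_1$ to a white vertex and $e_k$ an edge from a white vertex to $v_2$. Untying $e_1$ at $v_1$ and $e_k$ at $v_2$ splits $\gamma$ into the middle-part $\mu\in\cK$ and two components $C_1\ni v_1$, $C_2\ni v_2$. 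As $\gamma$ is pruned, $v_1$ carries no bud and has $a_1$ white neighbours, exactly one of which lies on $P$; hence $C_1$ is just $v_1$ with the $a_1-1$ remaining rooted bipartite mobiles around it, together with a marker recording the slot vacated by $e_1$ --- equivalently a sequence of $a_1-1$ rooted bipartite mobiles. Write $\mathcal{C}_b$ for the family of such components (so $C_1\in\mathcal{C}_{a_1}$ and $C_2\in\mathcal{C}_{a_2}$). Cutting along $P$ and gluing back the two ends of the middle-part to the vacated slots of $C_1,C_2$ are inverse operations, so
\[
\chB_{2a_1,2a_2}\ \simeq\ \mathcal{C}_{a_1}\times\cK\times\mathcal{C}_{a_2},
\]
a correspondence preserving the number of white vertices and the degrees of the non-marked black vertices.

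I would then run the same decomposition on $\gamma'\in\chQ_{2a_1-1,2a_2+1}$. Now the path $P$ from $v_1$ to $v_2$ is exactly the path of black-black edges of $\gamma'$ (in a tree there is a unique such path, and these two vertices are joined by the black-black path described in Section~\ref{sec:first}); since this middle-part contains neither $v_1$ nor $v_2$, pruning does not affect it. Pruned, $v_1$ has degree $a_1$, namely $a_1-1$ white neighbours together with its single black neighbour on $P$; so after untying $e_1$ the component $C_1$ is again $v_1$ with $a_1-1$ rooted bipartite mobiles and a vacated slot, i.e. $C_1\in\mathcal{C}_{a_1}$ --- the \emph{same} family as in the bipartite case. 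On the other side $v_2$ has pruned degree $a_2+1$ ($a_2$ white neighbours and one black neighbour on $P$), so $C_2\in\mathcal{C}_{a_2+1}$, and the middle-part now lies in $\cH$. The same cut-and-glue argument gives
\[
\chQ_{2a_1-1,2a_2+1}\ \simeq\ \mathcal{C}_{a_1}\times\cH\times\mathcal{C}_{a_2+1},
\]
again preserving white vertices and the degrees of non-marked black vertices.

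Finally I would absorb the surplus factor $\cR$ produced by Lemma~\ref{lem:middle}. There is an obvious bijection $\mathcal{C}_b\times\cR\simeq\mathcal{C}_{b+1}$: given a component of $\mathcal{C}_b$ (a black vertex $v$ carrying $b-1$ rooted mobiles and a vacated slot) and a rooted bipartite mobile $\rho\in\cR$, attach $\rho$ in the slot of $v$ immediately after the vacated one, raising the degree of $v$ from $b$ to $b+1$; this is clearly reversible and preserves all other parameters. Chaining the three correspondences through Lemma~\ref{lem:middle} then yields
\[
\chB_{2a_1,2a_2}\ \simeq\ \mathcal{C}_{a_1}\times\cK\times\mathcal{C}_{a_2}\ \simeq\ \mathcal{C}_{a_1}\times\cH\times\cR\times\mathcal{C}_{a_2}\ \simeq\ \mathcal{C}_{a_1}\times\cH\times\mathcal{C}_{a_2+1}\ \simeq\ \chQ_{2a_1-1,2a_2+1},
\]
and since every step preserves white vertices and the degrees of non-marked black vertices, the lemma follows. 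The part requiring genuine care is the degree bookkeeping: one must check that pruning, together with the fact that the path $P$ uses up exactly one incidence of $v_1$ and one of $v_2$, makes $C_1$ land in the same family $\mathcal{C}_{a_1}$ on the bipartite and quasi-bipartite sides, while $C_2$ moves from $\mathcal{C}_{a_2}$ to $\mathcal{C}_{a_2+1}$ --- which is precisely what the extra copy of $\cR$ coming from $\cK\simeq\cH\times\cR$ supplies (in agreement with $2a_1+2a_2=(2a_1-1)+(2a_2+1)$ on total degrees). The rest is the routine ``cut a tree along a path'' bijection.
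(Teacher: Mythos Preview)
Your proof is correct and is essentially the same argument as the paper's, just packaged slightly differently. The paper performs the surgery directly on $\gamma\in\chQ_{2a_1-1,2a_2+1}$: it singles out the white neighbour $w$ of $v_2$ immediately after the black neighbour $b$ in counter-clockwise order, takes the rooted mobile $r$ hanging from $w$, applies Lemma~\ref{lem:middle} to the pair $(\tau,r)\in\cH\times\cR$ to obtain $\tau'\in\cK$, and then replaces the middle-part while deleting the edge $\{v_2,w\}$. Your version factors the same operation through the explicit product decompositions $\chB_{2a_1,2a_2}\simeq\mathcal{C}_{a_1}\times\cK\times\mathcal{C}_{a_2}$ and $\chQ_{2a_1-1,2a_2+1}\simeq\mathcal{C}_{a_1}\times\cH\times\mathcal{C}_{a_2+1}$, with the absorption $\mathcal{C}_{a_2}\times\cR\simeq\mathcal{C}_{a_2+1}$ playing exactly the role of the paper's choice of $w$. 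The key input, Lemma~\ref{lem:middle}, and the degree bookkeeping are identical in both.
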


\begin{proof}
Let $\gamma \in \chQ_{2a_1-1,2a_2+1}$, 
and let $\tau$ be the middle-part of $\gamma$. 
We construct $\gamma'\in\chB_{2a_1,2a_2}$ 
as follows. Note that $v_2$ has a black neighbour $b$ (along
the branch from $v_2$ to $v_1$) and has otherwise $a_2$ white neighbours. Let $w$
be next neighbour after $b$ in counter-clockwise order around $v_2$, and
let $r$ be the mobile (in $\cR$) hanging from $w$.  
According to Lemma~\ref{lem:middle}, 
the pair $(\tau,r)$ corresponds to some $\tau'\in\cK$. 
If we replace the middle-part $\tau$ by $\tau'$ and take out the edge $\{v_2,w\}$
and the mobile $r$, we obtain some $\gamma'\in\chB_{2a_1,2a_2}$.
The inverse process is easy to describe, so we obtain a bijection
between $\chQ_{2a_1-1,2a_2+1}$ and $\chB_{2a_1,2a_2}$. 
\end{proof}
Lemma~\ref{lem4} (in an equivalent form) 
has first been shown by Cori~\cite[Theo.VI~p.75]{Co75} (again we have provided our own short proof to be self-contained). 

As a corollary of Lemma~\ref{lem4}, we obtain the formula of Theorem~\ref{theo:main} in the quasi-bipartite case, with the exception of the case where 
the two odd boundaries are of length $1$ (this case will be treated later,
in Lemma~\ref{lem:Q11}). 

\begin{coro}\label{coro2}
For $r\geq 2$ and $a_1,\ldots,a_r$ positive integers, the generating
function $G_{2a_1-1,2a_2+1,2a_3,\dots,2a_r}$ satisfies~\eqref{eq:F}. 
\end{coro}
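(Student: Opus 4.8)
The plan is to deduce the statement from the bipartite case already proved in Corollary~\ref{coro1}, by combining the bijection with mobiles (Theorem~\ref{theo:bdg}) with a mildly extended form of Lemma~\ref{lem4}. Set $(\ell_1,\ell_2,\ell_3,\dots,\ell_r)=(2a_1-1,2a_2+1,2a_3,\dots,2a_r)$ and $s=a_1+\cdots+a_r=(\ell_1+\cdots+\ell_r)/2$, and observe the crucial identity $(2a_1-1)+(2a_2+1)=2a_1+2a_2$: it says that $s$ is \emph{also} the quantity attached to the bipartite boundary-profile $(2a_1,2a_2,2a_3,\dots,2a_r)$, so that the series-factor $\frac1s\frac{\mathrm{d}^{r-2}}{\mathrm{d}t^{r-2}}R^s$ of~\eqref{eq:F} will be the same for the two profiles. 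By Theorem~\ref{theo:bdg}, exactly as in the proof of Corollary~\ref{coro1}, $G_{\ell_1,\dots,\ell_r}\,'$ equals $\big(\prod_{i=1}^r\ell_i\big)$ times the generating function $Q$ of quasi-bipartite mobiles carrying $r$ marked black vertices $v_1,\dots,v_r$ of respective degrees $\ell_1,\dots,\ell_r$ (so that $v_1,v_2$ are the two vertices of odd degree), with the same conventions as in Corollary~\ref{coro1}; likewise $G_{2a_1,\dots,2a_r}\,'=\big(\prod_{i=1}^r 2a_i\big)\,B$ with $B=B_{2a_1,\dots,2a_r}$.

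The second step relates $Q$ and $B$ through mobiles pruned at $v_1$ and $v_2$ only. By the Claim of Section~\ref{sec:first}, the black--black edges of a quasi-bipartite mobile form a path from $v_1$ to $v_2$, so $v_1$ (of degree $2a_1-1$) carries $a_1-1$ buds and $v_2$ (of degree $2a_2+1$) carries $a_2$ buds; pruning them thus leaves $v_1$ of degree $a_1$ and $v_2$ of degree $a_2+1$. Reinserting $k$ buds around a pruned black vertex of degree $d$ can be done in $\binom{d+k-1}{k}$ ways (distributing identical buds among the $d$ corners, as in Section~\ref{sec:first}); here this costs $\binom{2a_1-2}{a_1-1}$ at $v_1$ and $\binom{2a_2}{a_2}$ at $v_2$ on the quasi-bipartite side, against $\binom{2a_1-1}{a_1}$ and $\binom{2a_2-1}{a_2}$ on the bipartite side (where $v_1,v_2$ have degrees $2a_1,2a_2$ and carry $a_1,a_2$ buds). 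Now I would invoke Lemma~\ref{lem4}, extended so as to additionally keep track of the $r-2$ marked even-degree black vertices $v_3,\dots,v_r$: this extension is harmless because $v_3,\dots,v_r$ are non-extremal black vertices, hence transported with their degrees by the middle-part bijections of Lemma~\ref{lem:middle}, while the remaining construction in the proof of Lemma~\ref{lem4} is a surgery localized around $v_1,v_2$. Since the resulting bijection preserves white vertices, non-marked black vertices of each degree, and the vertices $v_3,\dots,v_r$ with their degrees, the two pruned-at-$\{v_1,v_2\}$ generating functions coincide; combining this with the bud counts above gives
\[
\binom{2a_1-1}{a_1}\binom{2a_2-1}{a_2}\, Q=\binom{2a_1-2}{a_1-1}\binom{2a_2}{a_2}\, B.
\]

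Putting the three relations together (and cancelling $\prod_{i=3}^r 2a_i$) yields
\[
G_{\ell_1,\dots,\ell_r}\,'=\frac{(2a_1-1)\binom{2a_1-2}{a_1-1}}{2a_1\binom{2a_1-1}{a_1}}\cdot\frac{(2a_2+1)\binom{2a_2}{a_2}}{2a_2\binom{2a_2-1}{a_2}}\cdot G_{2a_1,\dots,2a_r}\,'.
\]
A one-line computation using $\alpha(2a)=2a\binom{2a-1}{a}$, $\alpha(2a_1-1)=(2a_1-1)\binom{2a_1-2}{a_1-1}$ and $\alpha(2a_2+1)=(2a_2+1)\binom{2a_2}{a_2}$ identifies the two prefactors as $\alpha(2a_1-1)/\alpha(2a_1)$ and $\alpha(2a_2+1)/\alpha(2a_2)$. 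Integrating with respect to $t$ (with vanishing integration constant, since every map has at least one vertex) and plugging in the value of $G_{2a_1,\dots,2a_r}$ from Corollary~\ref{coro1}, the factors $\alpha(2a_1)$ and $\alpha(2a_2)$ cancel and one is left with $G_{\ell_1,\dots,\ell_r}=\big(\prod_{i=1}^r\alpha(\ell_i)\big)\cdot\frac1s\cdot\frac{\mathrm{d}^{r-2}}{\mathrm{d}t^{r-2}}R^s$, which is~\eqref{eq:F}. The hypothesis $a_1,a_2\geq 1$ --- equivalently, that the two odd lengths are not both equal to $1$ --- is precisely what makes Lemma~\ref{lem4} applicable. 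I expect the only genuinely delicate point to be the extension of Lemma~\ref{lem4} to the presence of the spectator marked vertices $v_3,\dots,v_r$; once that is granted, the remainder is routine bookkeeping with binomial coefficients and formal power series.
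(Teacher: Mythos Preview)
Your proof is correct and follows essentially the same approach as the paper's: both reduce to Corollary~\ref{coro1} via Theorem~\ref{theo:bdg}, bud-counting at the two odd-degree marked vertices, and the (spectator-extended) isomorphism of Lemma~\ref{lem4}, arriving at the identity $\alpha(2a_1-1)\alpha(2a_2+1)\,G_{2a_1,\ldots,2a_r}=\alpha(2a_1)\alpha(2a_2)\,G_{2a_1-1,2a_2+1,2a_3,\ldots,2a_r}$. Your version is slightly more explicit (pruning only at $v_1,v_2$, justifying why $v_3,\ldots,v_r$ are carried along intact, and noting the integration constant vanishes), but the argument is the same.
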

\begin{proof}
We first consider the case $r=2$. Let $\hB_{2a_1,2a_2}=\hB_{2a_1,2a_2}(t;x_1,x_2,\ldots)$ (resp. $B_{2a_1,2a_2}=B_{2a_1,2a_2}(t;x_1,x_2,\ldots)$)  
 be the generating function of $\chB_{2a_1,2a_2}$ (resp. of $\cB_{2a_1,2a_2}$)
 where $t$ marks the number of white vertices and $x_i$ marks the number of non-marked black vertices of degree $2i$.
There are $\binom{2a_i-1}{a_i}$ ways to place the buds at each marked black vertex
$v_i$ ($i\in\{1,2\}$), hence:
 $$
 B_{2a_1,2a_2}=\binom{2a_1-1}{a_1}\binom{2a_2-1}{a_2}\hB_{2a_1,2a_2}.
 $$  
 In addition Theorem~\ref{theo:bdg} ensures that $G_{2a_1,2a_2}\ \!\!'=2a_12a_2B_{2a_1,2a_2}$ (the multiplicative factor being due to the choice of a marked corner in each boundary-face). Hence:
 $$
 G_{2a_1,2a_2}\ \!\!'=4a_1a_2\binom{2a_1-1}{a_1}\binom{2a_2-1}{a_2}\hB_{2a_1,2a_2}.
 $$
 Similarly, if we denote by $\hQ_{2a_1-1,2a_2+1}=\hQ_{2a_1-1,2a_2+1}(t;x_1,x_2,\ldots)$ the generating function of the family $\chQ_{2a_1-1,2a_2+1}$  where $t$ marks the number of white vertices and $x_i$ marks the number of non-marked black vertices of degree $2i$,
 then we have:
 $$
 G_{2a_1-1,2a_2+1}\ \!\!'=(2a_1-1)(2a_2+1)\binom{2a_1-2}{a_1-1}\binom{2a_2}{a_2}\hQ_{2a_1-1,2a_2+1}.
 $$ 
 Since $\hB_{2a_1,2a_2}=\hQ_{2a_1-1,2a_2+1}$ by Lemma~\ref{lem4}, we get (with the notation $\alpha(\ell)=\displaystyle{\tfrac{\ell!}{\lfloor\ell/2\rfloor!\lfloor(\ell-1)/2\rfloor!}}$):
 $$
 \alpha(2a_1-1)\cdot\alpha(2a_2+1)\cdot G_{2a_1,2a_2}=\alpha(2a_1)\cdot \alpha(2a_2)\cdot G_{2a_1-1,2a_2+1}.
 $$
 In a very similar way (by the isomorphism of Lemma~\ref{lem4}), we have for $r\geq 2$:
 $$
  \alpha(2a_1-1)\cdot\alpha(2a_2+1)\cdot G_{2a_1,2a_2,2a_3,\ldots,2a_r}=\alpha(2a_1)\cdot \alpha(2a_2)\cdot G_{2a_1-1,2a_2+1,2a_3,\ldots,2a_r}.
 $$
 Hence the fact that $G_{2a_1-1,2a_2+1,2a_3,\ldots,2a_r}$ satisfies~\eqref{eq:F} 
 follows from the fact (already proved in Corollary~\ref{coro1}) that 
 $G_{2a_1,2a_2,2a_3,\ldots,2a_r}$ satisfies~\eqref{eq:F}. 
\end{proof}

It remains to show the fomula when the two odd boundary-faces have length $1$.
For that case, we have the following counterpart of Lemma~\ref{lem4}:

\begin{lem}\label{lem:Q11}
Let $\cB_{2}$ be the family of bipartite mobiles with a marked black vertex
of degree $2$, and let $\cB_2'$ be the family of objects from $\cB_2$ where
a white vertex is marked. Then
$$
\cQ_{1,1}\simeq\cB_2'.
$$
In addition, if $\gamma\in\cB_{2}'$ corresponds to $\gamma'\in \cQ_{1,1}$, then each white vertex of $\gamma$ corresponds to a white
vertex of $\gamma'$, and each non-marked black vertex of degree $2i$
in $\gamma$ corresponds to a non-marked black vertex of degree $2i$ in $\gamma'$.
\end{lem}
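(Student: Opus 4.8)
The statement to prove is Lemma~\ref{lem:Q11}: a bijection $\cQ_{1,1}\simeq\cB_2'$, where $\cB_2$ consists of bipartite mobiles with a marked black vertex of degree $2$ (i.e., with one bud and one white neighbour) and $\cB_2'$ is $\cB_2$ with an additional marked white vertex, and $\cQ_{1,1}$ consists of quasi-bipartite mobiles whose two odd black vertices both have degree $1$. The overall strategy mirrors the proof of Lemma~\ref{lem4}: analyze the middle-part of an object in $\cQ_{1,1}$ using Lemma~\ref{lem:middle}, and use the bijection $\cK\simeq\cR'\times\cR$ together with the degenerate structure of degree-$1$ black vertices to reattach the pieces into an object of $\cB_2'$. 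The degree-$1$ vertices $v_1,v_2$ are leaves of the mobile (a black vertex of degree $1$ has a single incident half-edge, here a black-black edge since it lies on the black path, and no bud), so the black path $P$ between them is a path $v_1=b_0, b_1,\dots,b_k=v_2$ whose interior vertices $b_1,\dots,b_{k-1}$ have even degree $\geq 2$; the middle-part $\tau$ of such a mobile lies in $\cH$.

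\textbf{Key steps, in order.} First I would describe the map $\cQ_{1,1}\to\cB_2'$. Given $\gamma\in\cQ_{1,1}$, extract its middle-part $\tau\in\cH$ (the black path with its hanging mobiles) together with the first and second ends, which are the unique edges incident to $v_1$ and $v_2$ respectively. By Lemma~\ref{lem:middle} we have $\cH\simeq\cR'$, so $\tau$ corresponds to a rooted mobile $\rho\in\cR'$ with a marked white vertex $w$; moreover $\cK\simeq\cR'\times\cR$, so the pair $(\tau, r_\varnothing)$ — where $r_\varnothing$ denotes the trivial rooted mobile reduced to a single white vertex — corresponds to some $\tau_{\cK}\in\cK$. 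The point is that since $v_1$ and $v_2$ have degree $1$, once we remove them the "external" attachment data they carried is trivial, so the natural target is a bipartite mobile with $v_1,v_2$ replaced by a single marked degree-$2$ black vertex and the position of $v_2$ recorded as a marked white vertex. Concretely: reglue $\tau_{\cK}$ (viewed as a rooted mobile with a secondary marked white corner, per the proof of Lemma~\ref{lem:middle}) and attach a new degree-$2$ black vertex $v$ with its bud at the root-corner; the secondary marked white vertex becomes the marked white vertex of the $\cB_2'$ object. Then I would spell out the inverse: starting from $\gamma'\in\cB_2'$ with marked degree-$2$ black vertex $v$ and marked white vertex $w'$, detach $v$ to expose a rooted mobile with a secondary marked corner at $w'$, recognize this as an element of $\cK$, apply $\cK\simeq\cH\times\cR$ to split off the path structure, and finally cap the two ends of the resulting middle-part with fresh degree-$1$ black vertices $v_1,v_2$ to land in $\cQ_{1,1}$. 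Finally I would check the parameter correspondence: white vertices map to white vertices, and non-marked black vertices of degree $2i$ map to non-marked black vertices of degree $2i$ — this follows because Lemma~\ref{lem:middle} already preserves these statistics on non-extremal black vertices, and the only vertices created or destroyed are the degree-$1$ vertices $v_1,v_2$ (destroyed) and the degree-$2$ vertex $v$ (created), none of which count toward the $x_i$'s since $v$ is marked and $v_1,v_2$ have odd degree.

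\textbf{Main obstacle.} The delicate point is bookkeeping the marked corners and the exact attachment positions so that the map is genuinely a bijection rather than a correspondence with multiplicity: in Lemma~\ref{lem4} the asymmetry between $v_1$ and $v_2$ was handled by choosing a specific neighbour $w$ of $v_2$ in a fixed rotational position, and here I expect a similar canonical choice is needed to pin down where the new degree-$2$ vertex $v$ is inserted and which white vertex becomes marked. One must verify that the "trivial rooted mobile" factor in $\cR'\times\cR$ really is forced (so that no information is lost when $v_1,v_2$ have degree exactly $1$), and that the root-corner of the $\cB_2'$ object is unambiguously recovered. I would guard against off-by-one or rotational ambiguities by treating the degree-$2$ black vertex as carrying its single bud in a prescribed corner relative to its white neighbour, exactly as a degree-$2$ black vertex arising from the map bijection would. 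Once these conventions are fixed, both directions are manifestly inverse and the statistics are preserved, so the lemma follows.
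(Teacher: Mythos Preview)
Your proposal contains the right opening observation --- since $v_1,v_2$ have degree~$1$, a mobile in $\cQ_{1,1}$ is nothing but its middle-part together with the two trivial endpoints, so $\cQ_{1,1}\simeq\cH$ --- and you even write down the bijection $\cH\simeq\cR'$ from Lemma~\ref{lem:middle}. That is already everything needed. The paper's proof stays on exactly this line:
\[
\cQ_{1,1}\ \simeq\ \cH\ \simeq\ \cT'\ \simeq\ \cR',
\]
and then notes that $\cR'\simeq\cB_2'$ by attaching a fresh degree-$2$ black vertex (one bud, one white neighbour) at the root corner of an element of $\cR'$; the marked white vertex of $\cR'$ becomes the marked white vertex of $\cB_2'$, and this is trivially reversible. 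No passage through $\cK$ is involved.

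Your detour through $\cK$ --- pairing $\tau$ with a trivial mobile $r_\varnothing$ and invoking $\cK\simeq\cH\times\cR$ --- is where the argument becomes problematic. You yourself flag the crux: one must check that ``the trivial rooted mobile factor is forced'' in the inverse direction. As written, your inverse map detaches the degree-$2$ vertex from $\gamma'\in\cB_2'$ to obtain a rooted mobile with a marked white \emph{vertex} $w'$, and then tries to ``recognize this as an element of $\cK$''. But an element of $\cK$ carries a secondary marked white \emph{corner}, not just a vertex (this is how $\cK$ is described in the proof of Lemma~\ref{lem:middle}); promoting the vertex $w'$ to a corner requires a choice, and no canonical choice guarantees that the resulting $\cR$-factor under $\cK\simeq\cH\times\cR$ is $r_\varnothing$. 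So the two maps you describe are not mutually inverse as stated.

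If one chases the diagram carefully, the $\cK$-detour (with the trivial factor) can be made to collapse back to the direct composite $\cH\simeq\cR'\simeq\cB_2'$ --- but then it is just the paper's argument with extra steps. Drop the detour, and what remains of your sketch \emph{is} the paper's proof.
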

\begin{proof}
A mobile in $\cQ_{1,1}$ is completely reduced to its middle-part, so we have
$$
\cQ_{1,1}\simeq \cH\simeq \cT'\simeq\cR'.
$$
Consider a mobile in $\cR'$, i.e., a bipartite mobile where a corner incident to a white vertex is marked, and a secondary white vertex is marked. 
At the marked corner we can attach an edge connected to a new marked black vertex $b$ of degree $2$ (the other
incident half-edge of $b$ being a bud). We thus obtain a mobile in $\cB_2'$,
and the mapping is clearly a bijection.
\end{proof} 

 By Lemma~\ref{lem:Q11} we have $2\ \!G_{1,1}=G_{2}'$, and similarly
 $2\ \!G_{1,1,2a_3,\ldots,2a_r}=G_{2,2a_3,\ldots,2a_r}\ \!\!'$. Hence, again
 the fact that $G_{1,1,2a_3,\ldots,2a_r}$ satisfies~\eqref{eq:F} follows
 from the fact that $G_{2,2a_3,\ldots,2a_r}$ satisfies~\eqref{eq:F}, which
 has been shown in Corollary~\ref{coro1}.

\section{Extension to $p$-constellations and quasi $p$-constellations}
We show in this next section that the formula obtained for bipartite and quasi-bipartite maps (Theorem~\ref{theo:main}) 
naturally extends to a formula (Theorem~\ref{theo:main2}) for  
$p$-constellations and quasi $p$-constellations.  The ingredients are the same (bijection with mobiles and aggregation process to get the 
formula for $p$-constellations, and then 
use blossoming trees to reduce the formula for quasi $p$-constellations to the formula for $p$-constellations). 

\subsection{Bijection between vertex-pointed hypermaps and hypermobiles}\label{sec:eulerian}
Hypermaps admit a natural orientation by orienting each edge so as to have its incident dark face to its left. 
The following bijection is again a reformulation of the bijection in \cite{BDG04} between vertex-pointed eulerian maps and mobiles. 
Starting from a hypermap $G$ with a pointed vertex $v_0$, and where the vertices of $G$ are considered as \textit{round vertices}, 
one obtains a mobile $M$ as follows:

\begin{itemize}
	\item Endow $G$ with its natural orientation.
	\item Endow $G$ with its geodesic orientation by keeping oriented edges which belong to a geodesic oriented path from $v_0$.
	\item Label vertices of $G$ by their distance from $v_0$.
	\item Put a light (resp. dark) square in each light (resp. dark) face of $G$.
	\item Apply the following rules to each edge (oriented or not) of $G$:
\begin{center}
\includegraphics[width=7cm]{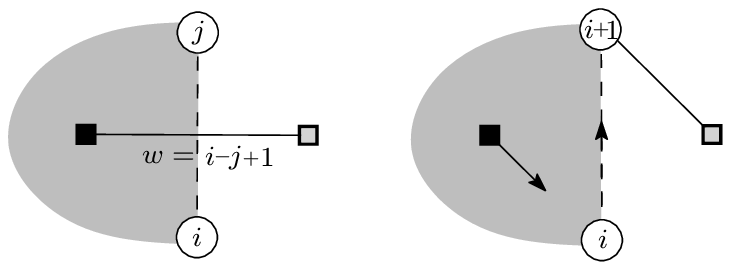}
\end{center}
	\item Forget labels on vertices.
\end{itemize}

\begin{figure}
\begin{center}
\includegraphics[width=\linewidth]{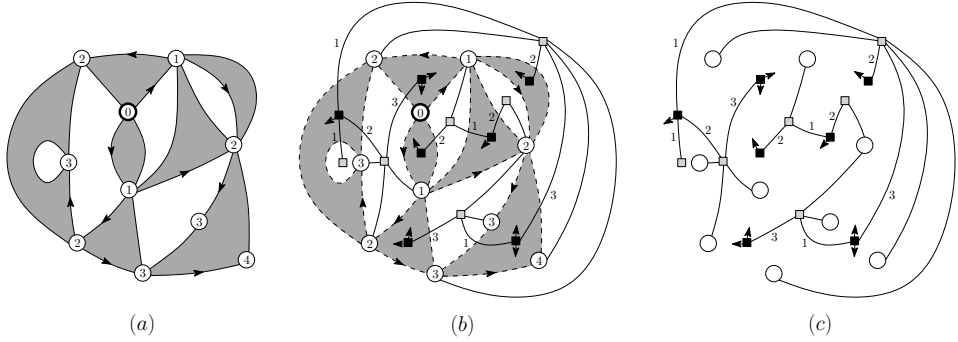}
\end{center}
\caption{(a)~A vertex-pointed hypermap endowed with its geodesic orientation (with respect to the marked vertex).
(b) The local rule is applied to each edge of the hypermap. (c)~The resulting hypermobile.}
\label{fig:b_mobiles3}
\end{figure}

\begin{defin}[Hypermobiles]
A hypermobile is a tree with three types of vertices (round, dark square, and light square) and positive integers (called \emph{weights}) on some edges, 
such that:

\begin{itemize} 
	\item there are two types of edges: between a round vertex and a light square vertex, or between a dark square vertex and a light square vertex (these edges
are called \emph{dark-light edges}),  
        \item dark square vertices possibly carry buds,
	\item dark-light edges carry a strictly positive weight, such that, for each square vertex (dark or light), 
the sum of weights on its incident edges equals the degree of the vertex.
\end{itemize}
\end{defin}

\begin{thm}[Bouttier, Di Francesco and Guitter~\cite{BDG04}]\label{theo:bdg2}
The above construction is a bijection between vertex-pointed hypermaps and hypermobiles. 
Each non-pointed vertex in the hypermap corresponds to a round vertex in the associated hypermobile,  
and each dark (resp. light) face corresponds to a dark (resp. light) square vertex of the same degree in the associated hypermobile.
\end{thm}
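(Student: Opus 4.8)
The plan is to follow the proof of Theorem~\ref{theo:bdg}: first check that the local rules output a genuine hypermobile, then describe an inverse ``closure'' rebuilding a vertex-pointed hypermap from a hypermobile, and finally verify that the two operations are mutually inverse. Since the statement is, as indicated, a reformulation of a bijection of~\cite{BDG04} (in the spirit of the reformulation from~\cite{BernardiFusy11} used for Theorem~\ref{theo:bdg}), the only genuinely new bookkeeping concerns the behaviour of the face-bicoloring and of the edge-weights; the combinatorial substance is the same.

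\textbf{The forward map is well-defined.} First I would check that the embedded graph $M$ produced by the rules is a plane tree. Its vertices are the $|V(G)|-1$ non-pointed vertices of $G$ (which become round), together with one square vertex per face of $G$ (dark or light according to the colour of the face), so $M$ has $|V(G)|-1+|F(G)|$ vertices; a direct inspection of the rules shows that $M$ has $|E(G)|$ edges, and $|V(G)|-1+|F(G)|=|E(G)|+1$ by Euler's formula, so it is enough to see that $M$ is connected, which follows by descending along decreasing labels from an arbitrary round vertex towards the site of the removed vertex $v_0$. Next I would verify the constraints defining a hypermobile: the rules create only round--light and dark--light edges; the weight attached to a dark--light edge is, by construction, the number of consecutive corners of the incident square face that the rule collapses at that spot, so around each square vertex the weights add up to its degree; and the buds appear on dark square vertices in the prescribed number. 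The parameter correspondences (non-pointed vertices $\leftrightarrow$ round vertices, dark/light faces $\leftrightarrow$ dark/light square vertices of the same degree) are then read off the construction.

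\textbf{The inverse map.} Conversely, from a hypermobile $M$ one reconstructs a pair $(G,v_0)$ by a closure. One fixes an arbitrary integer label at a chosen round vertex and propagates labels along a contour exploration of $M$, the reverse local rules prescribing the label increments; each square vertex of degree $d$ is opened into a face of degree $d$, the weight $w$ on each incident dark--light edge encoding that $w$ of the $d$ edge-sides of that face are re-attached there, while the buds are matched with the remaining edge-sides; finally a new vertex $v_0$ is created and glued to the dangling ends. The crucial point is to show that this yields a genuine \emph{planar} hypermap and that the labels obtained are exactly the graph distances from $v_0$ (equivalently, that the recovered orientation is the geodesic one). As in~\cite{BDG04}, this reduces to the statement that the total label variation around every cycle of the reconstructed map vanishes --- a winding-number / flow-conservation argument.

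\textbf{Mutual inverse, and the main obstacle.} Once both directions are known to be well-defined, checking that they are inverse to one another is a routine local verification on edges and corners, since both the forward rules and the closure act locally and each embedding is determined by its local data. The main obstacle is thus the inverse direction, specifically the planarity and the geodesic-labelling claims for the closure; the rest is bookkeeping with Euler's formula and the local rules. In the write-up one may either reproduce the winding-number argument of~\cite{BDG04} or, since the present rules differ from theirs only cosmetically, simply invoke it.
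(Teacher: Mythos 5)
The paper does not actually prove this statement: Theorem~\ref{theo:bdg2} is recalled as an external result of Bouttier, Di Francesco and Guitter~\cite{BDG04} (in the reformulated presentation of~\cite{BernardiFusy11}), and the text that follows only describes the construction and then uses the theorem as a black box. So there is no internal proof to compare yours against; the relevant comparison is between your sketch and the source you (and the paper) cite. Your outline follows the standard strategy for such results --- check the forward rules produce a hypermobile via an Euler-formula/connectivity count, define a closure, and verify the two maps are mutually inverse --- and you correctly identify the genuine difficulty, namely that the closure yields a planar hypermap whose recovered labels are the distances from $v_0$. But as written you do not prove that step: you name the winding-number/flow-conservation argument and then propose to ``simply invoke'' \cite{BDG04}, so as a self-contained proof the proposal has a gap exactly at its acknowledged crux (it is, however, no less complete than the paper, which cites the theorem outright). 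Two further points deserve care if you do write it out: (i) the well-definedness of the forward map in the hypermap setting is subtler than in the bipartite case, because dark--light edges carry weights that must sum to the face degree around each square vertex and buds appear only at dark squares, so ``direct inspection of the rules'' must be carried out on the actual local rules for oriented and unoriented edges (including how a weight $w>1$ arises and which map edges become buds), not just on an edge count; and (ii) the claim that $M$ has $|E(G)|$ edges is most safely obtained as a consequence of the tree property (via Euler's formula) rather than as an ingredient for proving it, since the correspondence between map edges and mobile items (edges versus buds, and weight contributions) is not one-to-one in the naive sense.
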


\subsection{Proof of Theorem~\ref{theo:main2} for $p$-constellations}\label{sec:constel}
For $p\geq 2$, hypermobiles corresponding to vertex-pointed $p$-constellations are called \textit{$p$-mobiles}.  
\begin{claim}[Characterization of $p$-mobiles~\cite{BDG04}]\label{claim:pmob}
A $p$-mobile satisfies the following properties:
\begin{itemize}
	\item dark-light edges have weight $p$,
	\item each dark square vertex, of degree $p$, has one light square neighbour and $p-1$ buds (thus can be seen as a ``big bud'' attached to the light square neighbour), 
	\item each light square vertex, of degree $pi$ for some $i\geq 1$, has $i$ dark square neighbours (i.e., carries $i$ big buds) and $(p-1)i$ round neighbours. 
\end{itemize}
\end{claim}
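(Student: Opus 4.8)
The plan is to derive the claim from the bijection of Theorem~\ref{theo:bdg2}, specialized to the case where $G$ is a $p$-constellation, followed by bookkeeping with the balance condition in the definition of hypermobiles. The first ingredient, valid for any $p$-constellation, is that the vertices of $G$ can be labelled in $\mathbb{Z}/p\mathbb{Z}$ so that the label increases by $1$ along each edge oriented with its dark face on its left; consistency is immediate, since along the boundary of a dark face of degree $p$ the label increases $p$ times and along the boundary of a light face of degree $pi$ it decreases $pi$ times, both $\equiv 0 \pmod p$. In particular, around each dark face $D$ the $p$ bounding edges form a directed cycle (in the natural orientation) whose $p$ vertices carry the $p$ distinct residues. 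By Theorem~\ref{theo:bdg2}, the hypermobile associated to $(G,v_0)$ has all its dark square vertices of degree $p$ and all its light square vertices of degree $pi$ for some $i\geq 1$, so the content of the claim is the finer statement about incidences and weights; the heart of the matter is to understand what the construction does around a single dark face $D$. Going through the local rules of Figure~\ref{fig:b_mobiles3} edge by edge around $D$, one checks that the dark square placed in $D$ receives exactly $p-1$ buds together with a single half-edge belonging to a genuine dark--light edge. (For a general hypermap, a dark face of degree $p$ may instead give rise to several dark--light edges of smaller weights; the extra label structure of $p$-constellations is exactly what rules this out.)

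Granting this, the remaining statements are forced by the balance condition ``sum of weights on incident edges $=$ degree''. At a dark square $d$, whose degree is $p$, the unique incident dark--light edge must carry weight $p$; this gives the first two items and exhibits $d$ as a ``big bud'' hanging from its unique light square neighbour. Now let $\ell$ be a light square vertex, of degree $pi$. Every dark--light edge has weight $p$ by the previous paragraph, so the balance condition at $\ell$ shows that $\ell$ is incident to exactly $i$ dark--light edges, i.e.\ carries $i$ big buds; its remaining $pi-i=(p-1)i$ incident edges are round--light edges, so $\ell$ has $(p-1)i$ round neighbours, which is the third item.

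The main obstacle is the middle step above: verifying that, around a dark face of degree $p$ of a $p$-constellation, the local rules collapse the face into exactly one weight-$p$ dark--light edge plus $p-1$ buds, rather than into several lighter dark--light edges. This amounts to a case analysis on the local rule of Figure~\ref{fig:b_mobiles3}, using that the $p$ vertices of the dark face carry the $p$ distinct residues of the $\mathbb{Z}/p\mathbb{Z}$-labelling compatible with the natural orientation; this is the only place where the specific combinatorics of $p$-constellations is really needed, and once it is in place everything reduces to the balance condition.
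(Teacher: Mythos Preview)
Your route is workable but it is not the paper's, and the step you yourself flag as ``the main obstacle'' is left undone.

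The paper never goes back to the map side. It argues entirely inside the hypermobile, using only that (i) the mobile is a tree, (ii) every square vertex has degree a multiple of $p$ (dark squares have degree exactly $p$), and (iii) the balance condition. Let $F$ be the set of dark--light edges whose weight is not a multiple of $p$. Since the mobile is a tree, $F$ is a forest; if $F\neq\varnothing$ it has a leaf $v$, and then $v$ has a unique incident edge of weight $\not\equiv 0\pmod p$, forcing $\deg(v)\not\equiv 0\pmod p$ by the balance condition --- a contradiction. Hence every weight is a multiple of $p$; since weights are positive and bounded above by the dark-square degree $p$, every weight equals $p$. Your final paragraph (items two and three from the balance condition) then applies verbatim. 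This is short, self-contained, and avoids any inspection of the local rules.

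Your approach, by contrast, unwinds the bijection around each dark face. That does work (it is how \cite{BDG04} handle constellations), but the ingredient you cite --- that the $p$ vertices around a dark face carry the $p$ distinct residues in $\mathbb{Z}/p\mathbb{Z}$ --- is not by itself enough, because the local rules depend on the actual geodesic distances, not on their residues. The honest argument is: along an oriented edge $u\to v$ one has $d(v)\le d(u)+1$, while going the other way around the dark $p$-cycle gives $d(u)\le d(v)+(p-1)$; combined with $d(v)\equiv d(u)+1\pmod p$ this forces $d(v)-d(u)\in\{+1,\,1-p\}$. Summing around the cycle yields exactly one edge with jump $1-p$ (which becomes the unique dark--light edge) and $p-1$ edges with jump $+1$ (which become the buds). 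Once you supply this, your proof is complete --- but the paper's forest argument sidesteps the whole analysis.
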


\begin{proof}
The first assertion is proved as follows.  
Let $T$ be a $p$-mobile and $F$ the forest formed by the edges whose weight is not a multiple of $p$, and their incident vertices. By construction, for each vertex of $T$, the degree and the sum of weights are multiple of $p$. Assume $F$ is non-empty. Then $F$ has a leaf $v$. Hence $v$ has a unique incident edge whose weight is not a multiple of $p$,
which implies that the degree of $p$ is not a multiple of $p$, a contradiction. Hence $F$ is empty and each weight in $T$ is a multiple of $p$. 
Moreover, dark square vertices have degree $p$, which implies that weights are at most equal to $p$. Hence all weights are equal to $p$.  
Then the second and third assertion follow directly from the first one. 
\end{proof}

Since the weights are always $p$ they can be omitted, and seeing dark square vertices as ``big buds'' it is clear that in the case $p=2$ we recover the 
mobiles for bipartite maps.  
A \emph{rooted $p$-mobile} is a $p$-mobile with a marked corner at a round vertex. Let $R_p\equiv R_p(t)\equiv R_p(t;x_1,x_2,\ldots)$ be the generating function of 
rooted $p$-mobiles where $t$ marks the number of white vertices and, for $i\geq 1$, $x_i$ marks the number of light square vertices of degree $pi$.
By a decomposition at the root (see~\cite{BDG04}), $R_p$  satisfies~\eqref{eq:seriesRp}.

\begin{figure}
\begin{center}
\includegraphics[width=\linewidth]{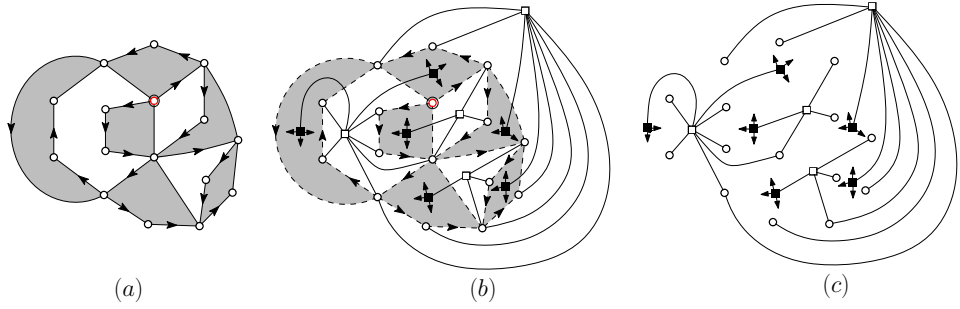}
\end{center}
\caption{(a)~A vertex-pointed $p$-constellation, $p=4$, endowed with its geodesic orientation (with respect to the marked vertex).
(b) The local rule is applied to each edge of the map. (c)~The resulting $p$-mobile (weights on dark-light edges, which all equal $p$, are omitted).}
\label{fig:b_pmob}
\end{figure}
  
One can now use the same processus as in Section \ref{sec:bip} to describe $p$-constellations with $r$ boundaries. 
For a $p$-mobile $\gamma$ with marked light square vertices $b_1,\ldots,b_r$ of degrees $pa_1,\ldots,pa_r$, the associated \emph{pruned $p$-mobile} $\widehat\gamma$ 
  is obtained from $\gamma$ by deleting the (big) buds at the marked vertices (thus the marked vertices get degrees $(p-1)a_1,\ldots,(p-1)a_r$). Conversely,
such a pruned mobile yields  $\prod_{i=1}^{r} {pa_i-1 \choose a_i}$ mobiles (because of the number of ways to place the big buds around the marked light square vertices). 
Hence, if we denote by $\cB^{(p)}_{pa_1,\dots,pa_r}$ the family of $p$-mobiles with $r$ marked light square vertices of respective degrees $pa_1,\dots,pa_r$, 
and denote by $\chB_{pa_1,\dots,pa_r}^{(p)}$ the family of pruned $p$-mobiles with $r$ marked light square vertices of respective degree $(p-1)a_1,\dots,(p-1)a_r$, 
we have:
$$\cB^{(p)}_{pa_1,\dots,pa_r} \simeq \prod_{i=1}^{r} {pa_i-1 \choose a_i}\chB^{(p)}_{pa_1,\dots,pa_r} $$
We consider the two following families:
\begin{itemize}
	\item $\chM^{(p)}_{pa_1,\dots,pa_r}$ is the family of pruned $p$-mobiles with $r$ marked light square vertices $v_1,\ldots,v_r$  of respective degrees $(p-1)a_1,\dots,(p-1)a_r$, the mobile being rooted at a corner of one of the marked vertices,
	\item $\mathcal{F}^{(p)}_{s}$ is the family of forests made of $s:=(p-1)\sum_{i=1}^{r} a_i$ rooted $p$-mobiles, and where additionnally $r-1$ round 
vertices $w_1,\ldots,w_{r-1}$ are marked. 
\end{itemize}

\begin{prop}
\label{bijection3}
There is an $(r-1)!$-to-$(r-1)!$ correspondence between the family $\chM^{(p)}_{pa_1,\dots,pa_r}$ and the family $\cF^{(p)}_{s}$.
If $\gamma\in\chM^{(p)}_{pa_1,\dots,pa_r}$ corresponds to $\gamma'\in\cF^{(p)}_s$, then each round vertex in $\gamma$ corresponds to a round vertex in $\gamma'$, and each light square vertex of degree $pi$ in $\gamma$ corresponds to a light square vertex of degree $pi$ in $\gamma'$. 
\end{prop}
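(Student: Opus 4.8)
The plan is to transcribe the proof of Proposition~\ref{bijection2} to the $p$-mobile setting, replacing white vertices by round vertices, black vertices by light square vertices, the grouping size $a_i$ by $(p-1)a_i$, and leaving the big buds (dark square vertices) and the weights untouched; by Claim~\ref{claim:pmob} these are constrained only at the square vertices, so they play no active role in the construction.

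First I would describe the map from $\cF^{(p)}_s$ to $\chM^{(p)}_{pa_1,\dots,pa_r}$. Starting from a forest $\gamma'$ of $s=(p-1)\sum_i a_i$ rooted $p$-mobiles carrying $r-1$ marked round vertices $w_1,\dots,w_{r-1}$, I group the rooted $p$-mobiles into consecutive blocks of sizes $(p-1)a_1,\dots,(p-1)a_r$ and, for each block, attach the $(p-1)a_i$ root round vertices to a fresh light square vertex $v_i$; since $v_i$ then has exactly $(p-1)a_i$ round neighbours and no big bud, it has precisely the degree of a pruned marked light square vertex, and one obtains a forest with $r$ components rooted at $v_1,\dots,v_r$ (the analogue of Figure~\ref{bij2}(a)). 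Then I aggregate these components exactly as in Proposition~\ref{bijection2}: at the $i$-th step I pick one of the $r-i$ components not containing $w_{r-i}$, with root say $v_j$, and identify $w_{r-i}$ with the rightmost round neighbour of $v_j$ (the analogue of Figure~\ref{bij2}(b)); after $r-1$ merges this yields a single tree rooted at a corner of some $v_j$, i.e.\ an element of $\chM^{(p)}_{pa_1,\dots,pa_r}$. The inverse map detaches, $r-1$ times, a marked light square vertex that is not the root one (reading Figure~\ref{bij2}(b) from right to left), producing $r$ components rooted at $v_1,\dots,v_r$, and then deletes $v_1,\dots,v_r$ and their incident edges to recover a forest of $s$ rooted $p$-mobiles, the detachment history supplying the $r-1$ marked round vertices. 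As in Proposition~\ref{bijection2}, each direction involves $\prod_{i=1}^{r-1}(r-i)=(r-1)!$ choices, so the correspondence is $(r-1)!$-to-$(r-1)!$, and since the construction only attaches or identifies round vertices it is immediate that round vertices correspond to round vertices and non-marked light square vertices of degree $pi$ to light square vertices of degree $pi$.

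The step I expect to require the most care --- though it remains routine --- is verifying that every intermediate and final object is a genuine (pruned) $p$-mobile in the sense of Claim~\ref{claim:pmob}. The grouping step only joins round vertices to fresh light square vertices, so it creates no light--light edge and endows each $v_i$ with the correct number of round neighbours; each merge only identifies two round vertices, so no weight or big-bud condition at a square vertex is disturbed, and the cyclic order around the merged round vertex is simply the one obtained by splicing the root corner of $v_j$ into the corner of $w_{r-i}$. With this checked, the remainder of the argument is verbatim that of Proposition~\ref{bijection2}, and (as in Corollary~\ref{coro1}) it will then feed into the generating-function computation that proves Theorem~\ref{theo:main2} for $p$-constellations.
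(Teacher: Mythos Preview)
Your proposal is correct and follows essentially the same approach as the paper: the paper's proof simply says the correspondence works exactly as in Proposition~\ref{bijection2}, with light square vertices playing the role of black vertices, round vertices playing the role of white vertices, and grouping into blocks of sizes $(p-1)a_1,\dots,(p-1)a_r$. Your additional verification that the intermediate objects remain valid (pruned) $p$-mobiles is a reasonable elaboration that the paper omits.
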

\begin{proof}
This correspondence works in the same way as in Theorem~\ref{bijection2}, where light square vertices act as black vertices and round vertices act as white vertices, and
where one groups the first $(p-1)a_1$ components of the forest, then the following  $(p-1)a_2$ components, and so on, and then uses the same aggregation process
as in the bipartite case. 
\end{proof}

As a corollary we obtain the formula of Theorem~\ref{theo:main2} in the case of $p$-constellations:
\begin{coro}\label{coro3}
For $r\geq 1$ and $a_1,\ldots,a_r$ positive integers, the generating function 
$G^{(p)}_{pa_1,\ldots,pa_r}$ satisfies:
\begin{equation}
\label{pmob1}
G^{(p)}_{pa_1,\dots ,pa_r} = \left(\prod_{i=1}^r \frac{(pa_i)!}{((p-1)a_i-1)!a_i!}\right)
\cdot\frac1{s}\cdot\frac{\mathrm{d}^{r-\!2}}{\mathrm{d}t^{r-\!2}}R_p^s,\ \ \mathrm{where}\ s=(p-1)\sum_{i=1}^r a_i.\end{equation}
\end{coro}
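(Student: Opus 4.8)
The plan is to mirror exactly the chain of identities used in the proof of Corollary~\ref{coro1}, substituting light square vertices for black vertices, round vertices for white vertices, and $p$-mobiles for bipartite mobiles throughout. First, I would dispose of the case $r=1$ separately: by Theorem~\ref{theo:bdg2} and Claim~\ref{claim:pmob}, $(G^{(p)}_{pa})'$ is the generating function of $p$-mobiles with a marked light square vertex $v$ of degree $pa$ together with a marked corner incident to $v$; removing the $a$ big buds from $v$ leaves a pruned $p$-mobile in which $v$ has degree $(p-1)a$, and there are $\binom{pa-1}{a}$ ways to restore them, so $(G^{(p)}_{pa})' = pa\binom{pa-1}{a}\, (\text{series of pruned }p\text{-mobiles rooted at }v)$. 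The series of a single pruned $p$-mobile rooted at a corner of $v$, where $v$ has degree $(p-1)a$ and carries $(p-1)a$ subtrees that are each rooted $p$-mobiles, is $R_p^{(p-1)a}$; dividing by the number of rootings and integrating gives $G^{(p)}_{pa}{}' = \frac{p}{p-1}\binom{pa-1}{a}R_p^{(p-1)a}$, matching~\eqref{pmob1} with $r=1$ since $\frac{(pa)!}{((p-1)a-1)!\,a!} = \frac{p}{p-1}\binom{pa-1}{a}\cdot(p-1)a\cdot\frac{1}{pa}\cdot pa$; I would check this arithmetic carefully but it is routine.

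For $r\geq 2$, I would introduce the generating functions $B^{(p)}_{pa_1,\dots,pa_r}$ of $\cB^{(p)}_{pa_1,\dots,pa_r}$ and $\widehat M^{(p)}_{pa_1,\dots,pa_r}$ of $\chM^{(p)}_{pa_1,\dots,pa_r}$, with $t$ marking round vertices and $x_i$ marking light square vertices of degree $pi$. By definition of the pruned, rooted family, $s\cdot B^{(p)}_{pa_1,\dots,pa_r} = \big(\prod_{i=1}^r\binom{pa_i-1}{a_i}\big)\cdot \widehat M^{(p)}_{pa_1,\dots,pa_r}$, the factor $s$ coming from the choice of a root corner among the $s=(p-1)\sum a_i$ corners at the marked vertices and the binomial product from restoring big buds. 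Theorem~\ref{theo:bdg2} then gives $(G^{(p)}_{pa_1,\dots,pa_r})' = \big(\prod_{i=1}^r pa_i\big)\cdot B^{(p)}_{pa_1,\dots,pa_r}$, the product coming from marking a corner in each boundary light face and the $t$-derivative from pointing a vertex in the constellation. Finally Proposition~\ref{bijection3} identifies $\chM^{(p)}_{pa_1,\dots,pa_r}$ with $\cF^{(p)}_s$: a forest of $s$ rooted $p$-mobiles has generating function $R_p^s$, and marking $r-1$ additional round vertices corresponds to applying $\frac{\mathrm d^{r-1}}{\mathrm dt^{r-1}}$ (since $t$ marks round vertices), so $\widehat M^{(p)}_{pa_1,\dots,pa_r} = \frac{\mathrm d^{r-1}}{\mathrm dt^{r-1}}R_p^s$.

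Combining the three displayed identities yields
$$
(G^{(p)}_{pa_1,\dots,pa_r})' = \frac1s\Big(\prod_{i=1}^r pa_i\binom{pa_i-1}{a_i}\Big)\cdot\frac{\mathrm d^{r-1}}{\mathrm dt^{r-1}}R_p^s,
$$
and integrating with respect to $t$ (the constant of integration vanishes since every term has positive $t$-valuation) gives~\eqref{pmob1}, once one checks the elementary identity $pa_i\binom{pa_i-1}{a_i} = \frac{(pa_i)!}{((p-1)a_i-1)!\,a_i!}$, which is immediate from $\binom{pa_i-1}{a_i} = \frac{(pa_i-1)!}{a_i!((p-1)a_i-1)!}$. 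The main obstacle, such as it is, is purely bookkeeping: one must be careful that the factor $s = (p-1)\sum a_i$ appearing in the rooting of pruned $p$-mobiles is the correct total number of corners at marked light square vertices after pruning (each marked vertex $v_i$ has degree $(p-1)a_i$ in the pruned mobile, hence contributes $(p-1)a_i$ corners), and that Proposition~\ref{bijection3} is being applied with the forest carrying exactly $s$ components so that the aggregation into $r$ blocks of sizes $(p-1)a_1,\dots,(p-1)a_r$ is consistent. No genuinely new idea beyond those of Section~\ref{sec:bip} is needed.
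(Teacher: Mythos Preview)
Your proposal is correct and follows exactly the approach the paper intends: the paper's own proof is essentially a one-line reference back to Corollary~\ref{coro1} (``combining the bijection of Theorem~\ref{theo:bdg2} and the correspondence of Proposition~\ref{bijection3}, upon consistent rooting and placing of the buds, and a final integration''), and you have simply written out those details. Your treatment of the case $r=1$ is slightly more verbose than necessary (the paper just states $(G^{(p)}_{pa})'=\binom{pa}{a}R_p^{(p-1)a}$ directly from the mobile description; incidentally the paper prints $R_p^a$ here, which is a typo), and the arithmetic string you wrote for the $r=1$ constant is garbled, but the content is right and you flag it as routine.
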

\begin{proof}
In the case $r=1$, the expression reads $G^{(p)}_{pa}\ \!\!'=\binom{pa}{a}R_p^a$, which is a direct consequence
of the bijection with $p$-mobiles (indeed $G^{(p)}_{pa}\ \!\!'$ is the series of $p$-mobiles
with a marked light square vertex $v$ of degree $pa$, with a marked corner
incident to $v$). So we now assume $r\geq 2$. 
The formula derives (as formula~\eqref{bip1}) by combining the bijection of Theorem~\ref{theo:bdg2} and the correspondence of Proposition \ref{bijection3}, upon consistent rooting and placing of the buds, and a final integration.

\end{proof}

\subsection{Proof of Theorem~\ref{theo:main2} for quasi $p$-constellations}\label{sec:qconstel}
In a similar way as for quasi bipartite maps, we prove Theorem~\ref{theo:main2} in the case of quasi $p$-constellations (two boundaries have length not a multiple of $p$)
by a reduction to $p$-constellations, with some more technical details.  
We call \emph{quasi $p$-mobiles} the hypermobiles associated to quasi $p$-constellations by the bijection of Section~\ref{sec:eulerian}, 
see Figure~\ref{fig:bij_qc} for an 
example. In the following, we
 will refer to vertices  whose degree is not a multiple of $p$ as \emph{non-regular vertices} and edges whose weight is not a multiple of $p$ as \emph{non-regular edges}.

\begin{figure}
\begin{center}
\includegraphics[width=0.9\linewidth]{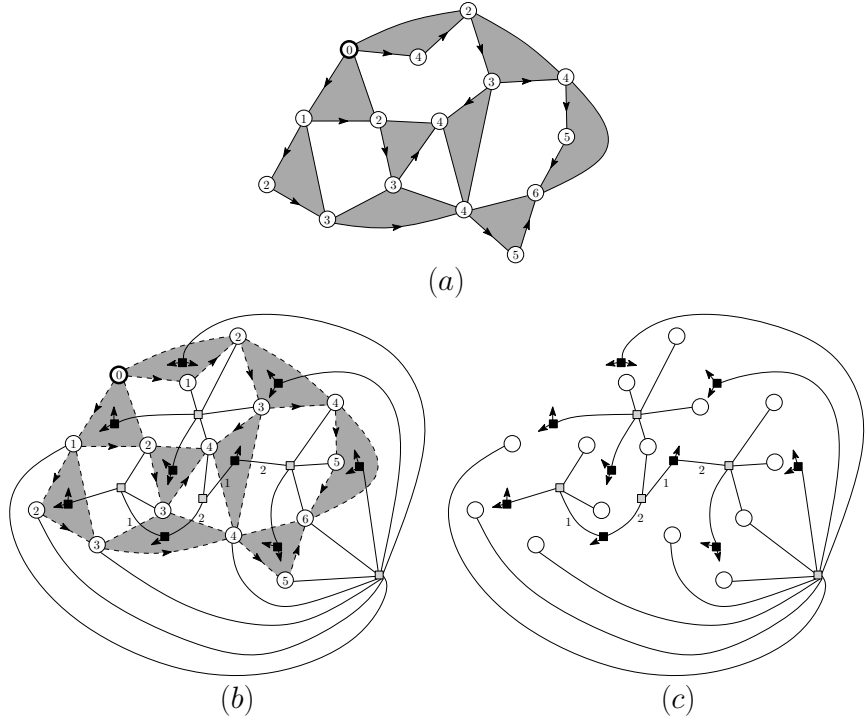}
\end{center}
\caption{(a)~A vertex-pointed quasi-$3$-constellation endowed with the  geodesic orientation. (b)~The local rule is applied to each edge of the map. (c)~The resulting quasi-$3$-mobile, where the weights on the alternating path are $(1,2,1,2)$.} 
\label{fig:bij_qc}
\end{figure}

\begin{claim}[Alternating path in a quasi-$p$-mobile]\label{claim:alt}
In a quasi-$p$-mobile, all weights of edges are at most $p$ (so regular edges have weight $p$) and 
the set of non-regular edges forms a non-empty path whose extremities are the two non-regular vertices. 
Moreover, if the degrees of the non-regular vertices $v_1,v_2$ are $pi-d=p(i-1)+p-d$ 
and $pj+d, ~i,j\geq 1, ~1\leq d\leq p-1$ (the sum of the two degrees must be a multiple of $p$), 
the weights along the path from $v_1$ to $v_2$ start with $p-d$, alternate between $p-d$ and $d$, and end with $d$. 
\end{claim}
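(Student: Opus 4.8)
The plan is to argue directly on the quasi-$p$-mobile, mimicking the parity computation used for the claim after Theorem~\ref{theo:bdg} and in the proof of Claim~\ref{claim:pmob}, but working modulo $p$ with the edge-weights. First I would record the structural input coming from the bijection. By Theorem~\ref{theo:bdg2}, a dark square vertex corresponds to a dark face of the quasi-$p$-constellation, hence has degree $p$; and the two non-regular vertices $v_1,v_2$ of the mobile correspond to the two light faces of degree not a multiple of $p$, so $v_1,v_2$ are light square vertices while every dark square and every other light square is regular. A non-regular edge is in particular a weighted edge, i.e.\ a dark--light edge, hence incident to a dark square; since the weights at a dark square are positive and sum to its degree $p$, every weight is at most $p$, and a weight that is a positive multiple of $p$ equals $p$. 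This is the first assertion. It also shows that a dark square incident to a regular (weight-$p$) edge has that edge as its only incident dark--light edge; equivalently, a dark square incident to at least one non-regular edge has at least two incident dark--light edges, all of weight in $\{1,\dots,p-1\}$ and hence all non-regular.

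Next I would study the subgraph $F$ formed by the non-regular edges together with their endpoints; it is a forest, being a subgraph of the tree, and has no isolated vertex. For a square vertex $v$, the identity ``degree $=$ sum of incident weights'' gives $\sum_{e\ni v,\, e\ \mathrm{non\text{-}regular}} w(e)\equiv \deg(v)\pmod p$, the regular incident edges contributing multiples of $p$. Hence a leaf of $F$ that is a light square has a single incident non-regular edge, of weight in $\{1,\dots,p-1\}$, so is non-regular; and by the previous paragraph no dark square can be a leaf of $F$. So every leaf of $F$ is one of $v_1,v_2$. Since $\deg(v_1)\not\equiv 0\pmod p$ forces $v_1$ to carry a non-regular edge, $F$ is nonempty; and since $F$ has at most two leaves while each of its (at least two-vertex) tree components has at least two leaves, $F$ is a single path, whose two endpoints are $v_1$ and $v_2$.

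Then I would compute the weights along the path $v_1=u_0-u_1-\cdots-u_m=v_2$, writing $\beta_i$ for the weight of $\{u_{i-1},u_i\}$. As there is no light--light edge, the path alternates light and dark squares; it starts and ends at light squares, so $m$ is even and $u_1,u_3,\dots$ are dark. An interior vertex $u_j$ ($0<j<m$) has exactly the two path-edges among its non-regular incident edges (it has $F$-degree $2$), its other incident dark--light edges being regular of weight $p$; so $\deg(u_j)=\beta_j+\beta_{j+1}+pk$ for some $k\ge 0$. For $u_j$ dark, $\deg(u_j)=p$ and $\beta_j+\beta_{j+1}\ge 2$ force $\beta_j+\beta_{j+1}=p$; for $u_j$ an interior (hence regular) light square, $\deg(u_j)\equiv 0\pmod p$ forces $\beta_j+\beta_{j+1}\equiv 0\pmod p$, hence again $=p$ since both terms lie in $\{1,\dots,p-1\}$. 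Thus $\beta_{i+1}=p-\beta_i$ for all $i$. The same bookkeeping at the endpoint $v_1$ (degree $pi-d$) gives $\beta_1\equiv -d\pmod p$, so $\beta_1=p-d$; at $v_2$ (degree $pj+d$) it gives $\beta_m\equiv d\pmod p$, so $\beta_m=d$, consistently with $m$ even. Therefore the weights along the path are $p-d,d,p-d,d,\dots,p-d,d$, as claimed.

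I expect the one delicate point to be pinning down the endpoints of $F$: the congruence $\sum_{e\ni v\ \mathrm{non\text{-}regular}}w(e)\equiv\deg(v)\pmod p$ alone does not forbid a non-regular light square from sitting in the interior of the path (for instance two incident non-regular edges of small weights need not sum to $0$ modulo $p$), so one must combine ``a dark square cannot be $F$-incident to exactly one non-regular edge'' with the fact, read off from the bijection, that a quasi-$p$-mobile has exactly two non-regular vertices; this forces all non-regular vertices to be leaves and identifies them with $v_1,v_2$.
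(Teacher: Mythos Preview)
Your argument is correct and follows essentially the same route as the paper: bound the weights by the dark-square degree constraint, let $F$ be the forest of non-regular edges, show its leaves must be non-regular light squares (you are more explicit than the paper in ruling out dark squares as leaves), deduce that $F$ is a single path with endpoints $v_1,v_2$, and then propagate the weight congruences along the path. Your fourth paragraph worries about a non-issue: once you know the path has exactly two leaves and leaves are non-regular, the two leaves \emph{are} $v_1,v_2$ by counting, so neither can sit in the interior; no further input from the bijection is needed beyond ``exactly two non-regular light squares''.
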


\begin{proof}
The fact that the weights are at most $p$ just follows from the fact that dark square vertices have degree $p$. 
Let $T$ be a quasi-$p$-mobile, and let $F$ be the forest formed by the non-regular edges of $T$.  
Leaves of $F$ are necessarily non-regular, hence $F$ has only two leaves which are $v_1,v_2$, so $F$
is reduced to a path $P$ connecting $v_1$ and $v_2$. 
Starting from $v_1$, the first edge of $P$ must have weight $p-d$. 
This edge is incident to a black square vertex of degree $p$, so the following edge of $P$ must have weight $d$. The next vertex on $P$ is either $v_2$
or is a regular light square vertex, in which case the next edge along $P$ must have weight $p-d$. The alternation continues the same way until reaching $v_2$
(necessarily using an edge of weight $d$). 
\end{proof}
As for $p$-mobiles, weights on regular edges (always equal to $p$) can be omitted, and dark square vertices not on the alternating path can 
be seen as ``big buds'' (those on the alternating path are considered as ``intermediate'' dark square vertices). It is easy to check
that regular light square vertices of degree $pi$ are adjacent to $i$ big buds, and non-regular light square vertices of degree $pi+d$ (for some $1\leq d\leq p-1$)
are adjacent to $i$ big buds. 

\begin{defin}[Blossoming $p$-trees~\cite{BMS00}]
For $p\geq 2$, a \emph{planted $p$-tree} is a planted tree (non-leaf vertices are light square, leaves are round)
where the arity of internal vertices is of the form $(p-1)i$. A \emph{blossoming $p$-tree} is a structure obtained from a planted $p$-tree where:
\begin{itemize}
\item
on each edge going down to a light square vertex, a dark square vertex (called \emph{intermediate}) is inserted that additionally carries $p-2$ buds,
\item
at each light square vertex of arity $(p-1)i$ one further attaches $i-1$ new dark square vertices (called \emph{big buds}),
each such dark square vertex carrying additionally $p-1$ buds. (After these attachments, the light square vertex is considered to have degree $pi$.) 
\end{itemize}
\end{defin}

\begin{figure}
\begin{center}
\includegraphics[width=6cm]{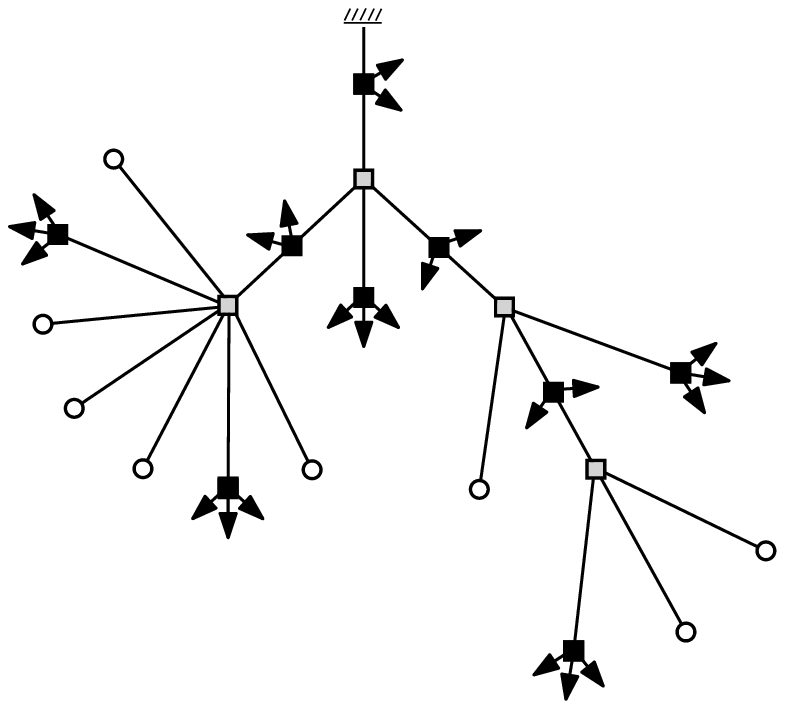}
\end{center}
\caption{A blossoming $4$-tree.}
\label{fig:blossoming_ptree}
\end{figure}

Note that in a blossoming $p$-tree, dark square vertices have degree $p$. When $p=2$, dark square vertices can be erased, 
and we obtain the description of a standard blossoming tree.
By a decomposition at the root~\cite{BMS00}, the generating function $T_p:=T_p(t;x_1,x_2,\dots)$ of rooted blossoming $p$-trees, 
where $t$ marks the number of non-root (round) leaves and $x_{i}$ marks the number of light square vertices of degree $pi$, is given by:
\begin{equation}\label{eq:Tp}
T_p=t+\sum_{i \geq 1} (p-1) \cdot x_{i} \binom{pi-1}{i-1} T_p^{(p-1)i} = t+\sum_{i \geq 1} x_{i} \binom{pi-1}{i} T_p^{(p-1)i},
\end{equation}
where the factor $(p-1)$ in the sum represents the number of ways to place the $(p-2)$ buds at the dark square vertex adjacent to the root. 

\begin{claim}\label{claim:TRp}
There is a bijection between the family $\cT_p$ of blossoming $p$-trees and the family $\cR_p$ of rooted $p$-mobiles. 
For $\gamma\in\cT_p$ and $\gamma'\in\cR_p$ the associated rooted $p$-mobile, each non-root round 
leaf of $\gamma$ corresponds to a round vertex of $\gamma'$, each light square vertex of degree $pi$ in $\gamma$ corresponds to a light square vertex of degree $pi$ in $\gamma'$. 
\end{claim}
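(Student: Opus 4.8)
The plan is to mimic the proof of Claim~\ref{claim:TR}, upgrading from bipartite mobiles to the $p$-ary setting. First I would observe that the decomposition-equation~\eqref{eq:Tp} satisfied by $T_p$ is literally the same as the equation~\eqref{eq:seriesRp} satisfied by $R_p$: in both cases, rooting at a (round) leaf either gives the marked atom $t$ or gives, for some $i\geq 1$, a light square vertex of degree $pi$ attached to the root-edge, which carries $i-1$ big buds and $(p-1)i$ further subtrees, each of which is again a rooted blossoming $p$-tree (respectively rooted $p$-mobile); the binomial $\binom{pi-1}{i}$ counts the cyclic placements of the $i$ big buds together with the $(p-1)i$ subtrees around the light square vertex (equivalently $(p-1)\binom{pi-1}{i-1}$, as displayed, once one also records the $p-2$ bud placements at the intermediate dark square vertex on the root-edge). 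Hence $T_p=R_p$ as formal power series in $t;x_1,x_2,\dots$.

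Next I would build the bijection recursively on this common recursive structure. Given a rooted blossoming $p$-tree $\gamma\in\cT_p$, read its root-edge: if $\gamma$ is a single non-root leaf, send it to the single round vertex of the trivial rooted $p$-mobile; otherwise the root-edge carries an intermediate dark square vertex and leads to a light square vertex $u$ of degree $pi$, around which sit $i-1$ big buds and $(p-1)i$ subtrees which are themselves rooted blossoming $p$-trees. Recursively map each subtree to a rooted $p$-mobile, keep the light square vertex $u$ (now of degree $pi$, with $i$ big buds total once we include the intermediate dark square vertex of the root-edge, matching Claim~\ref{claim:pmob}), and reattach the images of the subtrees and the big buds in the same cyclic order; this produces a rooted $p$-mobile. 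The inverse map is the analogous recursive peeling of a rooted $p$-mobile at its root-corner, using the characterization in Claim~\ref{claim:pmob} (each dark square vertex is a big bud / intermediate vertex of degree $p$, each light square vertex of degree $pi$ has exactly $i$ big buds and $(p-1)i$ round neighbours). By construction each non-root round leaf of $\gamma$ corresponds to a round vertex of the image, and each light square vertex of degree $pi$ corresponds to a light square vertex of degree $pi$, as required.

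The main obstacle, such as it is, is purely bookkeeping: one must check that the combinatorial weight carried by the intermediate dark square vertex on each down-edge (its $p-2$ buds, contributing the factor $p-1$ of the extra cyclic placements) is correctly absorbed when that intermediate vertex is merged with a big bud of the child light square vertex to form one genuine dark square vertex of degree $p$ in the $p$-mobile, so that the two forms of~\eqref{eq:Tp} match and no placements are double-counted or lost. Since~\eqref{eq:Tp} and~\eqref{eq:seriesRp} agree term by term, the recursion closes and the map is a bijection; Figure~\ref{fig:blossoming_ptree} already illustrates the correspondence in the case $p=4$. This is the exact analogue, with dark square vertices playing a bookkeeping role, of the $p=2$ argument in Claim~\ref{claim:TR}.
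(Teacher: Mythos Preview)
Your proposal is correct and follows essentially the same approach as the paper: observe that $T_p$ and $R_p$ satisfy the identical decomposition equation, and conclude that a parameter-preserving recursive bijection exists. The paper's own proof is even terser (it does not attempt to make the recursion explicit), so your added detail is harmless; just note that your description of the intermediate dark square on the root-edge ``becoming'' the $i$th big bud is not literally how the two decompositions line up---what matters, as you ultimately say, is that both sides contribute the same factor $\binom{pi-1}{i}$ and the same tuple of $(p-1)i$ sub-objects, which can be matched abstractly.
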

\begin{proof}
Note that the decomposition-equation~\eqref{eq:Tp} satisfied by $T_p$ is exactly the same
as the decomposition-equation~\eqref{eq:seriesRp} satisfied by $R_p$. Hence $T_p=R_p$, and 
one can easily produce recursively a bijection between $\cT_p$ and $\cR_p$ that
 sends light square vertices of degree $pi$ to light square vertices of degree $pi$, and sends non-root round leaves to round vertices.  
\end{proof}

The bijection between $\cT_p$ and $\cR_p$ will be used in order to get rid of the alternating path 
 between the non-regular two light square vertices that appear in a quasi-$p$-mobile. 
Note that, if we denote by $\cR_p'$ the family
of rooted $p$-mobiles with a marked round vertex (which does
not contribute to the number of round vertices), and by $\cT_p'$ the family 
of blossoming $p$-trees with a marked round leaf (which does
not contribute to the number of round leaves), then $\cT_p'\simeq\cR_p'$.\\

\begin{figure}
\begin{center}
\includegraphics[width=10cm]{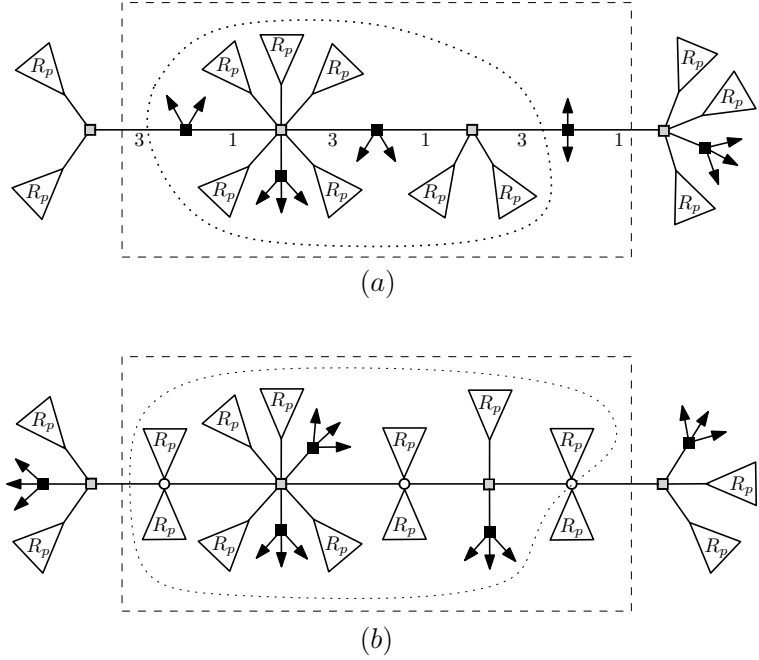}
\end{center}
\caption{Middle-parts in a quasi-$4$-mobile $(a)$, and in a $4$-mobile $(b)$.}
\label{fig:pmiddle}
\end{figure}

As in the (quasi-) bipartite case, for a hypermobile with two marked light-square vertices $v_1,v_2$,  
we can consider the operation of untying the two ends of the path $P$ connecting $v_1$ and $v_2$.
The obtained structure (taking away the connected components not containing $P$) is called the \emph{middle-part} of the hypermobile.  
Let $\cH_p$  be the family of structures that can be obtained as middle-parts of quasi-$p$-mobiles, where $v_1$ and $v_2$ are the two (ordered) non-regular vertices
(thus $P$ is the alternating path of the quasi $p$-mobile).  
And let $\cK_p$ be the family of structures  that can be obtained as middle-parts of $p$-mobiles with two marked light square vertices $v_1,v_2$. 
In the case of $\cH_p$, note that, according to Claim~\ref{claim:TRp}, the weights 
along the alternating path only depend on the degrees (modulo $p$) of the end vertices. In particular, the shape of the middle-part and the labels along the path are independent. Hence, from now on the weights can be omitted when considering middle-parts from $\cH_p$.

\begin{lem}\label{lem:pmiddle}
We have the following bijections: 
$$ \cH_p \simeq (p-1)\cdot\cT_p' \simeq (p-1)\cdot\cR_p'  \hspace{3cm} \cK_p \simeq \cR_p' \times \cR_p $$
Hence: \begin{center} $(p-1)\cdot\cK_p \simeq \cH_p \times \cR_p$. \end{center}

In these bijections, each light square vertex of degree $pi$ 
in an object on the left-hand side corresponds to a light square vertex of degree $pi$  in the corresponding object on the right-hand side.
\end{lem}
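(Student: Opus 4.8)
The plan is to mirror the proof of Lemma~\ref{lem:middle}, treating light square vertices as the analogues of black vertices and round vertices as the analogues of white vertices, while carefully tracking the extra factor of $p-1$ that arises from the dark square vertex adjacent to a marked leaf. I would first establish the chain $\cH_p \simeq (p-1)\cdot\cT_p'$. Starting from $\tau\in\cH_p$, by Claim~\ref{claim:alt} the structure $\tau$ consists of the alternating path $P$ together with, at each light square vertex of degree $pi$ on $P$, some big buds and $(p-1)i - (\text{something})$ pendant rooted $p$-mobiles (the bookkeeping being: a non-extremal light square vertex of degree $pi$ on $P$ carries $i-1$ big buds hanging off $P$ and $(p-1)i-2$ pendant $p$-mobiles, while the extremal ones carry $i-1$ big buds and $(p-1)i-1$ pendant $p$-mobiles since one round neighbour is ``used'' as the end of $P$). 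Replacing each pendant rooted $p$-mobile by the corresponding blossoming $p$-tree via Claim~\ref{claim:TRp}, and viewing the two ends of $P$ as a root-leaf and a marked non-root leaf, produces a blossoming $p$-tree with a marked leaf; the path $P$ becomes the path between the two marked leaves. The subtlety is the intermediate dark square vertices on $P$: in a quasi-$p$-mobile these carry weights $(p-d,d)$ on their two path-edges and $p-2$ buds, exactly matching the definition of intermediate dark square vertices in a blossoming $p$-tree — except that the value of $d$ is an additional datum. Since $d$ ranges over $\{1,\ldots,p-1\}$ and (as remarked before the lemma) the weights are determined by $d$ alone and the shape is independent of $d$, forgetting $d$ is exactly a $(p-1)$-to-$1$ map, giving $\cH_p\simeq(p-1)\cdot\cT_p'$. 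Combined with $\cT_p'\simeq\cR_p'$ (stated just before the lemma), this yields $\cH_p\simeq(p-1)\cdot\cR_p'$.

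Next I would prove $\cK_p\simeq\cR_p'\times\cR_p$, which is the easy direction and goes exactly as in Lemma~\ref{lem:middle}. An element $\tau\in\cK_p$ is a rooted $p$-mobile $\gamma$ with a secondary marked corner at a round vertex. Letting $w$ be the root round vertex, $w'$ the secondary marked round vertex, and $P$ the path between them, each round vertex on $P$ carries pendant $p$-mobiles on its two sides; let $r$ be the pendant $p$-mobile attached on, say, the left of $w'$ looking toward $w$. Untying $r$ turns $w'$ into a plain marked round vertex of $\gamma$, so $(\gamma,r)\in\cR_p'\times\cR_p$; the reverse map reattaches $r$ at $w'$. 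Here there is no $(p-1)$ factor because a $p$-mobile has no non-regular edges and hence no ``alternating'' ambiguity — all path-edges are regular of weight $p$ and all dark square vertices off or on $P$ behave identically. The parameter-preservation claim (light square vertices of degree $pi$ going to light square vertices of degree $pi$) is immediate from the fact that both constructions only move or reattach entire pendant sub-mobiles and Claim~\ref{claim:TRp} is degree-preserving on light square vertices.

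Finally, the displayed consequence $(p-1)\cdot\cK_p\simeq\cH_p\times\cR_p$ follows formally: from $\cK_p\simeq\cR_p'\times\cR_p$ we get $(p-1)\cdot\cK_p\simeq(p-1)\cdot\cR_p'\times\cR_p$, and substituting $(p-1)\cdot\cR_p'\simeq\cH_p$ gives the result. I would close by noting that all three bijections send each light square vertex of degree $pi$ in a left-hand object to a light square vertex of degree $pi$ in the corresponding right-hand object, which is exactly what we verified step by step (and which is all that Section~\ref{sec:qconstel} will need to transfer generating-function identities). The main obstacle, and the only place where this differs substantively from the bipartite case, is the correct treatment of the intermediate dark square vertices and weights on the alternating path: one must verify that the weight data along $P$ is genuinely captured by the single integer $d\in\{1,\ldots,p-1\}$, that this is independent of the (blossoming-tree) shape, and hence that forgetting it is precisely a clean $(p-1)$-to-$1$ correspondence rather than something that interacts with the rest of the structure — this is exactly the content of Claim~\ref{claim:alt} and the remark preceding the lemma, so the obstacle is really just one of careful accounting.
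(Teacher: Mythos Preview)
Your treatment of $\cK_p\simeq\cR_p'\times\cR_p$ is fine and matches the paper. The gap is in your argument for $\cH_p\simeq(p-1)\cdot\cT_p'$: you misidentify where the factor $p-1$ comes from, and your path-identification does not actually land in $\cT_p'$.

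First, recall (as stated just before the lemma) that $\cH_p$ is defined with the weights \emph{omitted}; hence there is no parameter $d$ left to forget, and your ``forgetting $d$'' cannot be the source of the $(p-1)$. Second, and more structurally, trace the two endpoints. In $\cH_p$ the alternating path, after untying, looks like
\[
(\text{dangling})-b'-\ell_1-\cdots-\ell_{m-1}-b-(\text{dangling}),
\]
with $b',b$ dark squares (each carrying $p-2$ buds). In a blossoming $p$-tree, intermediate dark squares are inserted only on edges going \emph{down to a light square}; consequently the root-leaf is adjacent to a dark square (this end matches $b'$), but a non-root round leaf is adjacent \emph{directly to its light-square parent}, with no dark square between them. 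So if you simply declare the two dangling ends to be the root-leaf and the marked leaf, the vertex next to your ``marked leaf'' is the dark square $b$, and the resulting object is \emph{not} a blossoming $p$-tree. Your path has exactly one dark square too many.

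The paper's fix is local at $b$: erase the $p-2$ buds at $b$, so that $b$ itself plays the role of the marked leaf (it is then a degree-$1$ vertex hanging from a light square, as required). Going back, one must reinstall $p-2$ buds around $b$; with one half-edge already fixed (the edge to the light square), there are $p-1$ ways to choose which of the remaining $p-1$ cyclic positions is the dangling end, and this is the genuine source of the factor $p-1$. Note this is exactly parallel to the factor $(p-1)$ in the decomposition equation~\eqref{eq:Tp} for $T_p$, which also counts bud placements at the dark square adjacent to the root.
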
 

\begin{proof}
For $\cK_p \simeq \cR_p' \times \cR_p$, the proof is similar to Lemma~\ref{lem:middle}, see Figure~\ref{fig:pmiddle}(b).  
To prove $\cH_p \simeq (p-1)\cdot\cT_p'$ (see also Figure~\ref{fig:pmiddle}(a)), 
we start similarly as in the proof of Lemma~\ref{lem:middle}, replacing each rooted $p$-mobile ``adjacent'' to the alternating path $P$ 
by the corresponding blossoming $p$-tree. Let $b$ be the (intermediate) dark square vertex adjacent to $v_2$ on $P$. 
If we erase the $p-2$ buds at $b$, then we naturally obtain a structure in $\cT_p'$ ($b$ acts as a secondary marked leaf once its incident buds are taken out).
Conversely there are $p-1$ ways to distribute the buds at $b$, which gives a factor $p-1$. 
\end{proof}  
Again, at the level of generating function expressions,
an even more precise statement (keeping track of a certain distance parameter between the two marked vertices)  
is given by Chapuy~\cite[Prop.7.5]{Ch09} (we include our quite shorter and completely bijective proof to make the paper self-contained). 

Now from Lemma~\ref{lem:pmiddle} we reduce pruned quasi-$p$-mobiles to pruned $p$-mobiles  
(pruned means: big buds at the marked light square vertices are taken out). 
Let $a_1,a_2$ be positive integers, and $1\leq d\leq p-1$. 
Define $\chB^{(p)}_{pa_1,pa_2}$ as the family of pruned $p$-mobiles with two marked black vertices $v_1, v_2$ of respective degrees $(p-1)a_1, (p-1)a_2$.
Similarly define $\chQ^{(p)}_{pa_1-d,pa_2+d}$ as the family of pruned quasi-$p$-mobiles with two marked black vertices $v_1, v_2$ of 
respective degrees $(p-1)a_1-d+1, (p-1)a_2+d$ (the two marked light square vertices are the non-regular ones).    
 
\begin{lem}\label{lem5}
For $a_1,a_2$ two positive integers, and $1\leq d\leq p-1$:
$$(p-1)\cdot\chB^{(p)}_{pa_1,pa_2} \simeq \chQ^{(p)}_{pa_1-d,pa_2+d}.$$
In addition, if $\gamma\in\chB^{(p)}_{pa_1,pa_2}$ corresponds to $\gamma'\in \chQ^{(p)}_{pa_1-d,pa_2+d}$, then each non-marked light square vertex of degree $pi$ 
 in $\gamma$ corresponds to a non-marked light square vertex of degree $pi$ in $\gamma'$.
\end{lem}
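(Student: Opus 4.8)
The plan is to mimic exactly the bijective argument used for Lemma~\ref{lem4} in the bipartite case, but now carrying the extra factor $p-1$ that comes from distributing buds at an intermediate dark square vertex, as in Lemma~\ref{lem:pmiddle}. First I would start from a pruned quasi-$p$-mobile $\gamma'\in\chQ^{(p)}_{pa_1-d,pa_2+d}$ and extract its middle-part $\tau\in\cH_p$ (the alternating path between the two non-regular light square vertices $v_1,v_2$, together with the $p$-mobiles hanging off it). The non-regular vertex $v_2$ has degree $(p-1)a_2+d$; I want to ``absorb'' the excess $d$ into the body of the mobile. Concretely, $v_2$ has one intermediate dark square neighbour $b$ on the path $P$ (the edge $\{v_2,b\}$ carrying weight $d$) and otherwise $a_2$ groups of round/big-bud structure; I would select, in a canonical way (say counter-clockwise after $b$), one rooted $p$-mobile $r$ hanging from $v_2$. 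Then I apply Lemma~\ref{lem:pmiddle}: the pair $(\tau,r)$, after removing $r$ and the edge attaching it, corresponds to some $\tau'\in\cK_p$, and this correspondence is $(p-1)$-to-$1$ because of the $p-1$ ways to place the $p-2$ buds at the intermediate dark square vertex adjacent to $v_2$. Re-inserting $\tau'$ in place of $\tau$ turns $v_2$ into a genuine marked light square vertex of degree $(p-1)a_2$ with $v_1$ likewise of degree $(p-1)a_1$ (the path $P$ disappears), i.e. an element of $\chB^{(p)}_{pa_1,pa_2}$.

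Next I would check the degree bookkeeping carefully: starting from $v_1$ of degree $(p-1)a_1-d+1$ (which is $pa_1-d$ minus the $a_1$ big buds that were pruned, wait — here $v_1$ carries $a_1$ big buds originally so after pruning its degree is $(p-1)a_1-d+1$; the $+1$ accounts for the first edge of the alternating path of weight $p-d$, which leaves a single incident half-edge of that weight) — the crucial point is that after the surgery the first end of $P$ becomes an ordinary edge to a round vertex, and $v_1$ recovers degree $(p-1)a_1$; similarly the operation at the $v_2$ side, which reattaches $r$ as one of the $(p-1)a_2$ round-neighbour slots, restores $v_2$ to degree $(p-1)a_2$. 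The inverse map is straightforward to describe: given $\gamma\in\chB^{(p)}_{pa_1,pa_2}$, one detaches a canonically chosen rooted $p$-mobile $r$ at $v_2$, applies the reverse direction of Lemma~\ref{lem:pmiddle} to split the middle-part back into an alternating-path structure in $\cH_p$ (picking up a $p-1$ choice for the buds at the new intermediate dark square vertex), and reattaches $r$. This gives the claimed $(p-1)$-to-$1$ correspondence $(p-1)\cdot\chB^{(p)}_{pa_1,pa_2}\simeq\chQ^{(p)}_{pa_1-d,pa_2+d}$, and since throughout we only relocate subtrees without touching non-marked light square vertices, each non-marked light square vertex of degree $pi$ in $\gamma$ matches one of degree $pi$ in $\gamma'$, as asserted.

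The main obstacle I anticipate is making the ``canonical choice'' of the rooted $p$-mobile $r$ at $v_2$ genuinely well-defined and reversible in the presence of weights on the alternating path and the distinction between big buds, intermediate dark square vertices, and round neighbours — one must be sure the chosen slot is recognizable both before and after the surgery (in particular that after turning $v_2$ into a regular marked vertex one can still identify ``the'' slot to re-split), and that the $p-1$ bud-placement factor is accounted for exactly once, not doubled with the $p-1$ already present in $\cH_p\simeq(p-1)\cdot\cT_p'$. I would handle this by fixing the convention that the relevant intermediate dark square vertex's buds are always the ones carrying the $p-1$ multiplicity (so the factor is entirely located in the $\cH_p$-to-$\cT_p'$ step of Lemma~\ref{lem:pmiddle}), and by using the cyclic order around $v_2$ to pin down $r$ unambiguously; the rest is routine and parallels Lemma~\ref{lem4} verbatim.

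\begin{proof}
Let $\gamma \in \chQ^{(p)}_{pa_1-d,pa_2+d}$, and let $\tau$ be its middle-part (an element of $\cH_p$), namely the alternating path $P$ between the two non-regular light square vertices $v_1,v_2$ together with the rooted $p$-mobiles hanging off it. Recall from Claim~\ref{claim:alt} that the edge of $P$ incident to $v_2$ has weight $d$ and joins $v_2$ to an intermediate dark square vertex; apart from this, $v_2$ has $a_2$ big buds and $(p-1)a_2$ round neighbours after pruning (its pruned degree being $(p-1)a_2+d$, with $d$ absorbed in the path-end). Let $w$ be the first round neighbour of $v_2$ encountered in counter-clockwise order starting right after the edge of $P$, and let $r$ be the rooted $p$-mobile hanging from $w$. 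By Lemma~\ref{lem:pmiddle}, the pair $(\tau,r)\in\cH_p\times\cR_p$ corresponds, via the $(p-1)$-to-$1$ map there, to some $\tau'\in\cK_p$, the $(p-1)$-ambiguity being exactly the number of ways to place the $p-2$ buds at the intermediate dark square vertex adjacent to $v_2$ on $P$. Replacing $\tau$ by $\tau'$ inside $\gamma$, deleting the edge $\{v_2,w\}$ and the mobile $r$, and observing that $v_1$ and $v_2$ now have degrees $(p-1)a_1$ and $(p-1)a_2$ with the path $P$ removed, we obtain an element $\gamma'\in\chB^{(p)}_{pa_1,pa_2}$.

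The inverse is described in the reverse order: given $\gamma'\in\chB^{(p)}_{pa_1,pa_2}$, one detaches the rooted $p$-mobile at the canonically chosen round neighbour of $v_2$, applies the reverse direction of Lemma~\ref{lem:pmiddle} (which splits the middle-part of $\gamma'$ into an element of $\cH_p$ and incurs the $p-1$ choice for the buds at the newly created intermediate dark square vertex next to $v_2$), and reattaches the detached $p$-mobile at $v_2$. Since the only operations performed relocate rooted sub-$p$-mobiles and modify the structure along $P$ near the marked vertices, no non-marked light square vertex is altered: each non-marked light square vertex of degree $pi$ in $\gamma$ corresponds to one of degree $pi$ in $\gamma'$. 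Keeping track of the single $p-1$ factor, this yields the stated $(p-1)$-to-$1$ correspondence
$$(p-1)\cdot\chB^{(p)}_{pa_1,pa_2} \simeq \chQ^{(p)}_{pa_1-d,pa_2+d},$$
which completes the proof.
\end{proof}
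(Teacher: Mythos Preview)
Your argument has a genuine gap in the degree bookkeeping, which makes the construction work only when $d=1$. In the pruned quasi-$p$-mobile, the marked vertex $v_2$ has degree $(p-1)a_2+d$, consisting of \emph{one} path-edge to the intermediate dark square vertex $b$ and $(p-1)a_2+d-1$ round neighbours (not $(p-1)a_2$ as you write; big buds are already gone after pruning, and the path-edge contributes $1$ to the degree regardless of its weight). Symmetrically, $v_1$ has degree $(p-1)a_1-d+1$: one path-edge and $(p-1)a_1-d$ round neighbours. When you replace $\tau$ by $\tau'\in\cK_p$ and delete the single edge $\{v_2,w\}$, the second end is still one half-edge at $v_2$, so $v_2$'s degree drops by exactly $1$ to $(p-1)a_2+d-1$, and $v_1$'s degree stays at $(p-1)a_1-d+1$. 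Neither equals the target $(p-1)a_i$ unless $d=1$, so for $d\geq 2$ the output is not in $\chB^{(p)}_{pa_1,pa_2}$.

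The missing step, which the paper supplies, is to additionally \emph{transfer $d-1$ rooted $p$-mobiles from $v_2$ to $v_1$}: one takes the $d$ round neighbours $w_0,\dots,w_{d-1}$ of $v_2$ immediately after $b$, uses $r_0$ together with $\tau$ in Lemma~\ref{lem:pmiddle} exactly as you do, and then moves the remaining $r_1,\dots,r_{d-1}$ (with their incident edges) from $v_2$ to $v_1$. After this, $v_2$ has degree $(p-1)a_2+d-1-(d-1)=(p-1)a_2$ and $v_1$ has degree $(p-1)a_1-d+1+(d-1)=(p-1)a_1$, as required. The $(p-1)$ factor and the preservation of non-marked light square degrees are handled exactly as you describe; only this transfer is missing.
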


\begin{proof}
The proof is similar to Lemma~\ref{lem4}, where we additionally have to transfer a part of the degree contribution from one end of the alternating path to the other, in order to obtain a well-formed pruned $p$-mobile.
Let $\gamma \in \chQ^{(p)}_{pa_1-d,pa_2+d}$, 
and let $\tau$ be the middle-part of $\gamma$. 
We construct $\gamma'\in\chB^{(p)}_{pa_1,pa_2}$ 
as follows. Note that $v_2$ has a dark square neighbour $b$ (along
the path from $v_2$ to $v_1$) and has otherwise $(p-1)a_2+d-1$ white neighbours. Let $w_0,\dots,w_{d-1}$
be the $d$ next neighbourd after $b$ in counter-clockwise order around $v_2$, and
let $r_0,\dots,r_{d-1}$ be the mobiles (in $\cR_p$) hanging from $w_0,\dots,w_{d-1}$.  
According to Lemma~\ref{lem:middle}, 
the pair $(\tau,r_0)$ corresponds to some pair $(i,\tau')$, where $1\leq i\leq p-1$ and $\tau'\in\cK_p$. 
If we replace the middle-part $\tau$ by $\tau'$ and take out the edge $\{v_2,w_0\}$
and the mobile $r_0$, then transfer $r_1,\dots,r_{d-1}$ from $v_2$ to $v_1$, we obtain some $\gamma'\in\chB^{(p)}_{pa_1,pa_2}$.
We associate to $\gamma$ the pair $(i,\gamma')$. 
The inverse process is easy to describe, so we obtain a bijection
between $\chQ^{(p)}_{pa_1-d,pa_2+d}$ and $(p-1)\cdot\chB^{(p)}_{pa_1,pa_2}$. 
\end{proof}

Denote by $\cQ_{pa_1-d_1,pa_2+d,pa_3,\ldots,pa_r}$ the family of quasi $p$-constellations where the marked light faces are of degrees $pa_1-d,pa_2+d,pa_3,\ldots,pa_r$. 
As a corollary of Lemma~\ref{lem5} (the additionnal factors correspond to the number of ways to place
the big buds at the pruned marked vertices), 
we obtain 
$${pa_1-1 \choose a_1}{pa_2-1 \choose a_2}\cQ^{(p)}_{pa_1-d,pa_2+d}\simeq (p-1)\cdot{pa_1-d-1 \choose a_1-1}{pa_2+d-1 \choose a_2}\cB^{(p)}_{pa_1,pa_2},$$ 
and very similarly
(since the isomorphism of  Lemma~\ref{lem5} preserves light square vertex degrees):
 \begin{dmath*}
{pa_1-1 \choose a_1}{pa_2-1 \choose a_2}\cQ^{(p)}_{pa_1-d,pa_2+d,pa_3,\ldots,pa_r}\simeq (p-1)\cdot{pa_1-d-1 \choose a_1-1}{pa_2+d-1 \choose a_2}\cB^{(p)}_{pa_1,pa_2,pa_3,\ldots,pa_r},
\end{dmath*}
which yields Theorem~\ref{theo:main2} in the case where at least one of the two non-regular (degree not multiple of $p$) light faces is of degree larger than $p$. 
In the remaining we show the formula of Theorem~\ref{theo:main2} when the two non-regular light faces are of degree smaller than $p$.   

\begin{lem}\label{lem:Qp11}
Let $\cB_{p}$ be the family of $p$-mobiles with a marked light square vertex
of degree $p$, and let $\cB_p'$ be the family of objects from $\cB_p$ where
a round vertex is marked. Then, for any $d\in[1..p-1]$, 
$$
\cQ^{(p)}_{d,p-d}\simeq \cB_p'.
$$
In addition, if $\gamma\in\cB_{p}'$ corresponds to $\gamma'\in \cQ^{(p)}_{d,p-d}$, then each non-marked light square vertex of degree $pi$
in $\gamma$ corresponds to a non-marked light square vertex of degree $pi$ in $\gamma'$.
\end{lem}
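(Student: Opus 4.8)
The plan is to mimic the proof of Lemma~\ref{lem:Q11}, which is the $p=2$ special case, reducing everything to the already-established chain of bijections from Lemma~\ref{lem:pmiddle}. First I would observe that a quasi-$p$-mobile in $\cQ^{(p)}_{d,p-d}$ has its two non-regular light square vertices $v_1,v_2$ of respective degrees $d$ and $p-d$; by Claim~\ref{claim:alt}, the alternating path $P$ joining them has weights alternating between $p-d$ and $d$, and since the degree of $v_1$ is exactly $d$ (which equals $p-(p-d)$, matching the first weight $p-d$ only if $d=p-d$... more carefully: $v_1$ has degree $pi-d$ with $i=1$, so degree $p-d$, and likewise $v_2$ has degree $pj+d$ with $j=0$... wait $j\geq 1$). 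The point to nail down is that $v_1$ and $v_2$ each have no round neighbours and no big buds: $v_1$ of degree $p-d$ has a single incident edge of weight $p-d$ (going to the adjacent intermediate dark square vertex on $P$), and similarly $v_2$ of degree $d$ has a single incident edge of weight $d$. Hence the entire quasi-$p$-mobile is reduced to its middle-part: $\cQ^{(p)}_{d,p-d}\simeq\cH_p$.

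Then I would invoke Lemma~\ref{lem:pmiddle}, which gives $\cH_p\simeq(p-1)\cdot\cT_p'\simeq(p-1)\cdot\cR_p'$. So $\cQ^{(p)}_{d,p-d}\simeq(p-1)\cdot\cR_p'$. The final step is to exhibit a $(p-1)$-to-$1$ correspondence between $\cR_p'$ (rooted $p$-mobiles with a secondary marked round vertex) and $\cB_p'$. Given a rooted $p$-mobile $\gamma\in\cR_p'$ with marked corner $c$ at a round vertex, attach at $c$ a new edge (of weight $p$) leading to a fresh light square vertex? No — rather, I attach at $c$ a new dark-light configuration: the new marked light square vertex $b$ of degree $p$ must have one dark square neighbour and $p-1$ round neighbours, so this does not quite work as in the $p=2$ case. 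Instead, the cleanest route is: at the marked corner of $\gamma$ attach a new intermediate dark square vertex carrying $p-2$ buds, then attach to that dark square vertex a new light square vertex $b$ of degree $p$ whose remaining $p-1$ incident slots are filled by... hmm, $b$ should be the marked light square vertex of degree $p$, adjacent to $i=1$ big bud and $(p-1)\cdot 1 = p-1$ round neighbours. The marked corner already supplies one round vertex (the root round vertex of $\gamma$); so $b$ needs $p-2$ further round neighbours plus one big bud, but those extra round neighbours are not present in $\gamma$.

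So the right construction is the reverse: I would instead take $\gamma\in\cB_p'$, locate the marked light square vertex $v$ of degree $p$, which by Claim~\ref{claim:pmob}-type reasoning has one big bud and $p-1$ round neighbours; mark additionally the round vertex specified by the $\cB_p'$-structure. Deleting $v$ together with its big bud disconnects $\gamma$ into $p-1$ rooted $p$-mobiles (hanging from the $p-1$ round neighbours of $v$), one of which carries the secondary marked round vertex. To recover a single object of $\cR_p'$ one aggregates these $p-1$ components exactly as in Proposition~\ref{bijection3} (the aggregation-at-marked-vertex process), which is $(r-1)!$-to-$(r-1)!$ with $r=p-1$... but that gives $(p-2)!$-to-$(p-2)!$, not a plain $(p-1)$-to-$1$. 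The main obstacle, and the thing I would think hardest about, is pinning down the exact multiplicity: whether the correspondence is most naturally $1$-to-$1$ (by choosing a canonical root among the $p-1$ components, e.g. the first one in the cyclic order around $v$) giving $\cQ^{(p)}_{d,p-d}\simeq(p-1)\cdot\cR_p'\simeq\cB_p'$ via a single clean attachment step at the marked corner that creates $v$ of degree $p$ with its big bud and $p-2$ \emph{buds}-turned-round... Actually the genuinely clean statement is: attach at the marked corner of $\gamma\in\cR_p'$ a new light square vertex $b$ of degree $p$ connected by one edge (weight $p$) to the root round vertex, with $b$'s other $p-1$ half-edges forming big buds — but that violates the degree bookkeeping ($b$ would need weight-$p$ on each of $p-1$ edges). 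I will resolve this by following the $p=2$ template literally and checking that in the $p$-mobile picture a degree-$p$ light square vertex attached by a single weight-$p$ edge to one round vertex, carrying $i=1$ big bud, forces $p-1=1\cdot(p-1)$ round neighbours, so actually $b$ has $p-1$ round neighbours; the $\cR_p'$ object supplies only one of them (at the root), so the other $p-2$ must come from somewhere — confirming that the honest construction is the aggregation one and the factor is indeed $p-1$ after choosing the canonical component, so $\cQ^{(p)}_{d,p-d}\simeq\cB_p'$. I would write the proof as: $\cQ^{(p)}_{d,p-d}\simeq\cH_p\simeq(p-1)\cdot\cR_p'$, then describe the $(p-1)$-to-$1$ map $\cR_p'\to\cB_p'$ explicitly via the single attachment plus canonical-root choice, and verify degree/vertex-type preservation; the parity/degree bookkeeping in that last step is where the care is needed.
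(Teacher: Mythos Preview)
Your argument contains a genuine gap at the very first step. You claim that $v_1$ and $v_2$ have no round neighbours, so that the quasi-$p$-mobile is reduced to its middle-part and $\cQ^{(p)}_{d,p-d}\simeq\cH_p$. This is only true when $p=2$. In general, the degree of a light square vertex in a hypermobile is its number of incident half-edges (not the weight of a single edge): the constraint is that the sum of weights on its incident \emph{dark-light} edges equals its degree, but edges to round vertices are unweighted and still contribute to the degree. Concretely, $v_1$ of degree $d$ has exactly one dark-light edge (the path edge, of weight $d$, since writing $d=p\cdot 1-(p-d)$ in Claim~\ref{claim:alt} gives first weight $p-(p-d)=d$), zero big buds, and hence $d-1$ round neighbours; similarly $v_2$ of degree $p-d$ has $p-d-1$ round neighbours. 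So the correct decomposition is
\[
\cQ^{(p)}_{d,p-d}\ \simeq\ \cR_p^{\,d-1}\times\cH_p\times\cR_p^{\,p-d-1}\ \simeq\ (p-1)\cdot\cR_p'\times\cR_p^{\,p-2},
\]
using Lemma~\ref{lem:pmiddle} for the middle factor.

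This missing $\cR_p^{\,p-2}$ factor is exactly what resolves the ``main obstacle'' you ran into at the end. On the $\cB_p'$ side, the marked light square vertex of degree $p$ has one big bud (which breaks the cyclic symmetry) and $p-1$ round neighbours $w_1,\ldots,w_{p-1}$, each the root of a rooted $p$-mobile; the secondary marked round vertex lies in exactly one of these, so
\[
\cB_p'\ \simeq\ (p-1)\cdot\cR_p'\times\cR_p^{\,p-2},
\]
the factor $p-1$ recording which $r_j$ carries the mark. No aggregation process is needed here, and there is no $(p-2)!$ ambiguity: the big bud gives a canonical linear order on the $p-1$ hanging mobiles. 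Once the first step is corrected, the two sides match directly and the proof is the same as the paper's.
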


\begin{proof}
A mobile in $\cQ^{(p)}_{d,p-d}$ can be decomposed as follows: two marked light squares $v_1, v_2$, their incident rooted $p$-mobiles (one for each round neighbour) 
and the middle-part. Hence we have:
\begin{eqnarray*}
\cQ^{(p)}_{d,p-d} & \simeq & \cR_p^{d-1}\times\cH_p\times\cR_p^{p-d-1} \\
 & \simeq & (p-1)\cdot \cT_p'\times\cR_p^{p-2} \\
 & \simeq & (p-1)\cdot \cR_p'\times\cR_p^{p-2}.
\end{eqnarray*}
If we now consider an object $\gamma'\in\cB_p'$, the marked light square vertex (of degree $p$) carries one big bud,
and has $p-1$ white neighbours $w_1,\ldots,w_{p-1}$. From each white neighbour $w_i$ hangs a rooted $p$-mobile $r_i$, 
and one of these rooted $p$-mobiles has a secondary marked round vertex
(the secondary marked vertex of $\gamma'$). Thus 
$$
\cB_p'\simeq (p-1)\cdot \cR_p'\times\cR_p^{p-2},
$$ 
where the factor $p-1$ is due to the choice of which of the mobiles $r_1,\ldots,r_{p-1}$ carries the secondary marked round vertex. 
\end{proof} 

 By Lemma~\ref{lem:Qp11} we have: $$p\ \!G^{(p)}_{d,p-d}=d(p-d)\ (G^{(p)}_{p})',$$ (the additional factors are due to marking a corner in each marked light face), 
and similarly:
 $$p\ \!G^{(p)}_{d,p-d,pa_3,\ldots,pa_r}=d(p-d)\ G^{(p)}_{p,pa_3,\ldots,pa_r}\ \!\!'.$$ Hence, again
 the fact that $G^{(p)}_{d,p-d,pa_3,\ldots,pa_r}$ satisfies~\eqref{eq:Fp} follows
 from the fact (already proved) that $G^{(p)}_{p,pa_3,\ldots,pa_r}$ satisfies~\eqref{eq:Fp}. This concludes the proof of Theorem~\ref{theo:main2}.

\vspace{.4cm}

\noindent\emph{Acknowledgements.} 
The authors are very grateful to Marie Albenque, Guillaume Chapuy, Dominique Poulalhon, Juanjo Ru\'e, and Gilles
Schaeffer for inspiring discussions. Both authors are supported by the  ERC grant no 208471 - ExploreMaps project. 
The second author is also supported by the ANR grant  
``Cartaplus'' 12-JS02-001-01 and the ANR grant ``EGOS'' 12-JS02-002-01.

\end{document}